\definecolor{purple}{rgb}{0.55,0.2,0.90}
\newcommand{\PP}{{\mathbb P}}
\newcommand{\LL}{{\mathbb L}}
\newcommand{\RR}{\mathbb{R}}
\newcommand{\cF}{{\mathcal F}}
\newcommand{\Ex}{{\mathbb E}}
\newcommand{\NN}{\mathbb{N}}
\DeclareMathOperator{\cov}{cov}
\newcommand{\dd}{\mathrm d}
\theoremstyle{example}
\newtheorem{theorem}{Theorem}[section]
\newtheorem{lemma}[theorem]{Lemma}
\newtheorem{prop}[theorem]{Proposition}
\newtheorem{cor}[theorem]{Corollary}
\title{Testing for spherical and elliptical symmetry}
\author{Isaia Albisetti$^{1}$, Fadoua Balabdaoui$^{1}$ and  Hajo Holzmann$^2$\thanks{Corresponding author. Address: FB 12 - Mathematik und Informatik, Hans-Meerweinstr.~6, D-35043 Marburg, Germany. Email address:  holzmann@mathematik.uni-marburg.de }}
\begin{document}
\maketitle

\begin{center}
$^{1}$Seminar f\"ur Statistik, ETH Z\"urich, 8092, Z\"urich, Switzerland  \\
$^{2}$Fachbereich Mathematik und Informatik, Philipps-Universit\"at Marburg, D-35032 Marburg, Germany
\end{center}

\begin{abstract}	
We 
construct new testing procedures for spherical and elliptical symmetry based on the characterization that a random vector $X$ with finite mean has a spherical distribution if and only if $\Ex[u^\top X | v^\top X] = 0$ holds for any two perpendicular vectors $u$ and $v$. Our test is based on the Kolmogorov-Smirnov statistic, and its rejection region is found via the spherically symmetric bootstrap. We show the consistency of the spherically symmetric bootstrap test using a general Donsker theorem which is of some independent interest. 
For the case of testing for elliptical symmetry, the Kolmogorov-Smirnov statistic has an asymptotic drift term due to the estimated location and scale parameters. Therefore, an additional standardization is required in the bootstrap procedure. 
In a simulation study, the size and the power properties of our tests are assessed for several distributions and the performance is compared to that of several competing procedures.
\end{abstract}

\textit{Keywords.}\quad  bootstrap; elliptical symmetry; empirical process; Kolmogorov-Smirnov test; spherical symmetry 

\section{Introduction}
The distribution of a random vector $X \in \RR^d$ is said to be spherically symmetric if it stays invariant under orthogonal transformations, that is for any $d \times d$ matrix $\Gamma$ such that $\Gamma^{-1} = \Gamma^\top$ we have $\Gamma X \stackrel{\mathcal{L}}{=} X$, where $\stackrel{\mathcal{L}}{=}$ denotes equality in distribution. The random vector $X$ is called elliptically symmetric if it is spherically symmetric up to translation and rescaling. 
Spherically and elliptically symmetric distributions have drawn substantial interest as they present important and natural generalizations to the multivariate Gaussian distribution \citep{fang2018symmetric}. 
For example, in the simple linear model $Y = \mu + \epsilon$, where the expectation $\mu \in \RR^d $ is assumed to belong  to a $k$-dimensional linear subspace for some $k \le d$ and $\epsilon$ has a zero mean and a spherically symmetric distribution, the conclusion of the Gauss-Markov theorem on the optimality of the least squares estimator can be strengthened substantially \citep{berk1989optimality}. The classical t-test also extends to spherically symmetric distributions, see e.g.~\citet{cacoullos2014polar}.
%
%
In a single index regression model, where a response $Y \in \RR $ is linked to a given covariate $X \in \RR^d$ via the model $Y = \psi(\alpha^\top X)  + \epsilon$ where $\Ex[\epsilon | X]=0$, $\psi$ is some unknown ridge function and $\alpha$ is the unknown regression vector or index,
\citet{brillinger83} noted that if  the covariate $X$ has a non-degenerate Gaussian distribution, and  $\cov(\psi(\alpha^\top X), \alpha^\top X) \ne 0$ then the usual least squares estimator of $\alpha$ converges to a vector that is co-linear with $\alpha$, and hence a very simple estimator of $\alpha$ can be constructed. The same facts continue to hold when Gaussianity is replaced  by non-degenerate elliptical symmetry, see \citet[Lemma 2.28]{olive2014}. %
%
%
%
The main feature 	is that if $X \in \RR^d$ admits an elliptically symmetric distribution with covariance matrix $\Sigma$, then for any $\beta \in \RR^d$ the conditional expectation
\begin{eqnarray}\label{PropEllipSym}
\Ex[X | \beta^\top X]  = \mu +  \frac{\beta^\top (X - \mu)}{\beta^\top \Sigma \beta} \Sigma \beta 
\end{eqnarray} 
is linear in $\beta^\top X$, a well-known property for Gaussian distributions, see \cite{cambanis81}.  
%
%
\citet{DuanKerChau91} use \eqref{PropEllipSym} in their study of inverse regression. See also \citet[Section 5]{Baringhaus91} for an account on some interesting applications of testing spherical symmetry related to animal navigation, wind speed or paleomagnetic studies. 
Thus, testing for spherical or elliptical symmetry is an important problem, and various methods have been proposed in the literature. 
Building up on the work by \cite{smith77} for two-dimensional vectors, \cite{Baringhaus91} presents a family of tests for spherical symmetry in higher dimensions which exploit the fact that $X$ is spherically symmetric if and only if $\Vert X \Vert$ and $X/\Vert X \Vert$ are independent and that  $X / \Vert X \Vert $ is uniformly distributed on $\mathcal{S}_{d-1}$, the $d-1$-dimensional unit sphere. \cite{Baringhaus91} shows that the limiting distribution of the test statistics does not involve the unknown distribution of $X$.
%
%
%
The test suggested by \cite{Fang93} uses the fact that $X$ has a spherically symmetric distribution if and only if the distribution of $u^\top X$ is the same  for any $u \in \mathcal{S}_{d-1}$. \cite{Fang93} then compares the distributions of $u^\top_k X$ for finitely many $u_k$ by using a two-sample Wilcoxon-type test. 
\cite{koltchi2000} suggest tests for spherical and elliptical symmetry by comparing the distribution of a random vector with its spherically symmetric projection using a Kolmogorov-Smirnov-type statistic. 

\cite{Liang2008} proposed a test 
using the fact that if $X$ is spherically symmetric such that $0$ is not an atom of its distribution, then $X/\Vert X \Vert$ is uniformly distributed on the sphere and hence a suitable transformation of $X/\Vert X \Vert$ to a $(d-1)$-dimensional vector involving the Beta distribution function has independent coordinates which are uniformly distributed on $[0,1]$.
The method is easily implementable but does not yield a test which is universally consistent. 
%

\cite{Henze2014} proposed a test  based on the characterization that $X $ has a spherically symmetric distribution if and only if the characteristic function of $X$ takes the form $\phi_X(t)= \Ex[e^{i t^\top X}]  = g(\Vert t \Vert^2)$ for $t \in \RR^d$ and some function $g$. The test is based on the empirical characteristic function,  does not require moment assumptions, and the resulting test statistics are of Kolmogorov-Smirnov- as well as of Cram\'er-von Mises-type. The critical value is determined by using the spherically symmetric bootstrap, however without theoretical justification. 

Spherical and elliptical symmetry are also relevant notions in dynamic models as assumptions on the innovations. \citet{francq2017tests} extend the testing methodology from \cite{Henze2014} to testing for spherical symmetry of the innovation distribution in multivariate GARCH models. To derive the asymptotic distribution they use the central limit theorem for martingale difference sequences. In combination with a parametric bootstrap for the volatility matrix, they also extend the spherically symmetric bootstrap  to this setting.
Extensions of the methodology proposed in this paper to such testing problems might also be of some interest.

\subsection{Our contributions and organization of the manuscript}

In this paper, we propose tests for spherical and elliptical symmetry based on the characterization that a random vector $X \in \RR^d$, $d \ge 2$  with finite expectation has a spherically symmetric distribution if and only if
\begin{equation}\label{eq:eatonchara}
\Ex[u^\top X | v^\top X]=0 \quad \forall \ u,v \in \RR^d \quad \text{with } u^\top v = 0,
\end{equation}
see \citet[Theorem 1]{eaton86}. 
In Section \ref{sec: KStests} we consider spherical symmetry. For an appropriate modification of the characterization \eqref{eq:eatonchara} which does not require conditioning but only uses moment equations, we propose a consistent Kolmogorov-Smirnov-type asymptotic test in Section \ref{sec:characsphe}. Since it is not asymptotically distribution free, we develop a bootstrap version in Section \ref{sec:spherboot}. Indeed, in this section we give a consistency result for the spherically symmetric bootstrap for general VC-classes, which is of some independent interest and may be applied for other testing procedures. 
Our proof does not use Poissonization but is rather based on a Donsker theorem for general processes \cite[Theorem 2.11.1]{vdvwell}. 
A discretized, implementable version of the Kolmogorov-Smirnov statistic is suggested in Section \ref{sec:discretealg}. 
In Section \ref{sec:extension} we propose extensions of our methodology to testing for elliptical symmetry. 
Finally, in Section \ref{sec: simss}, we investigate the level and power of the tests in finite samples for several distributions and compare the performance of our test to that of tests constructed by \cite{Baringhaus91}, \cite{Liang2008},  \cite{Henze2014} and \cite{koltchi2000}.  Our simulation results show that the new procedure performs very reasonably under both the null and alternative hypotheses.  
Proofs are deferred to an Appendix.

\section{The new test for spherical symmetry}\label{sec: KStests}

In this section, for a random vector $X \in \RR^d$ we consider testing
\begin{eqnarray*}
	H_0: X \ \textrm{is spherically symmetric} \  \quad  \textrm{versus} \ \quad H_1: X \ \textrm{is not.}
\end{eqnarray*}

Given $n \in \NN$ we let $X_1, \ldots, X_n$ be i.i.d.~random vectors with the same distribution $P$ as $X$. The empirical distribution of $X_1, \ldots, X_n$ is denoted by $\PP_n$. We let $\PP$ and $\Ex$ be the probability measure and the expected value operator on the probability space on which the $X_j$ are defined. 
We shall write $P f := \int f \dd P$ if the integral exists.

\subsection{Characterizing spherical symmetry}\label{sec:characsphe}

Our test will be based on the following adaptation of \citet[Theorem 1]{eaton86}.

\begin{lemma}\label{Theo.generate}
	Let  $X \in \RR^d$ be a random vector with finite expectation.  Then $X$ is spherically symmetric if and only if for any $c \in \RR$ and $u, v \in \mathcal{S}_{d-1}$ we have that 
	\begin{eqnarray*}
		\Ex[(v  - (v^\top u)  u)^\top X \mathds{1}_{u^\top X \ge c}] =0.
	\end{eqnarray*}
\end{lemma}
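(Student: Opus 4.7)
The plan is to show that the moment equations in the lemma are just a convenient reformulation of Eaton's conditional characterization, and then to quote Eaton.

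First I would observe that the vector $w := v - (v^\top u) u$ appearing in the statement is the component of $v$ orthogonal to $u$: a one-line computation using $\|u\|=1$ gives $w^\top u = v^\top u - (v^\top u)(u^\top u) = 0$. Conversely, every $w' \in \RR^d$ with $w' \perp u$, $w' \neq 0$, is of the form $\lambda w$ for some $\lambda$: indeed, taking $v = w'/\|w'\|$ in $\mathcal{S}_{d-1}$, one has $v^\top u = 0$, so $v - (v^\top u) u = v = w'/\|w'\|$. This bookkeeping lets me move freely between ``$v, u \in \mathcal{S}_{d-1}$ plus orthogonalization'' and ``arbitrary pairs $(u, w)$ with $u \perp w$''.

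For the forward direction, if $X$ is spherically symmetric then Eaton's Theorem 1 gives $\Ex[w^\top X \mid u^\top X] = 0$ a.s.\ whenever $w \perp u$. The finite-mean assumption makes this conditional expectation well-defined, and the tower property yields
\begin{equation*}
\Ex\bigl[(v - (v^\top u) u)^\top X \, \mathds{1}_{u^\top X \ge c}\bigr]
= \Ex\bigl[\Ex[w^\top X \mid u^\top X]\, \mathds{1}_{u^\top X \ge c}\bigr] = 0.
\end{equation*}

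For the backward direction, assume the displayed moment equations hold for all $c \in \RR$ and $u, v \in \mathcal{S}_{d-1}$. Fix $u \in \mathcal{S}_{d-1}$ and any $w \perp u$; I want to deduce $\Ex[w^\top X \mid u^\top X] = 0$ a.s.\ so that Eaton's theorem applies. By the observation above and linearity in $v$, the hypothesis yields $\Ex[w^\top X \, \mathds{1}_{u^\top X \ge c}] = 0$ for every $c \in \RR$. Writing $Z := w^\top X$ and $Y := u^\top X$, differences over half-lines give $\Ex[Z\,\mathds{1}_{Y \in [a,b)}] = 0$ for all $a < b$; since $\Ex|Z| \le \|w\|\, \Ex\|X\| < \infty$, a standard Dynkin/monotone-class argument extends this to $\Ex[Z\,\mathds{1}_{Y \in B}] = 0$ for every Borel $B \subseteq \RR$, which is precisely the defining property of $\Ex[Z \mid Y] = 0$ a.s. Eaton's theorem then delivers spherical symmetry.

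The substantive content is essentially borrowed from Eaton; the only real step to be careful about is the backward extension from half-lines to all Borel sets, which I expect to be the main (mild) obstacle — it needs the finite-expectation hypothesis to justify integrability and to form the conditional expectation. Everything else is algebraic manipulation of the orthogonal decomposition $v = (v^\top u) u + w$.
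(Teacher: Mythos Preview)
Your proposal is correct and follows essentially the same route as the paper: reduce to Eaton's conditional characterization via the orthogonal decomposition $v=(v^\top u)u+w$, and prove the equivalence ``$\Ex[Y\mathds{1}_{Z\ge c}]=0$ for all $c$ $\Leftrightarrow$ $\Ex[Y\mid Z]=0$'' by a $\pi$--$\lambda$/monotone-class argument using the half-lines as a generator. The paper merely isolates the latter step as a separate auxiliary lemma rather than inlining it as you do.
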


Define the family of functions on $\RR^d$ by
\begin{equation}\label{classF}
\mathcal{F}= \Big \{ f_{u, v, c}(x) = (v - (v^\top u) u)^\top x  \mathds{1}_{u^\top  x  \ge c} \mid   u, v \in \mathcal{S}_{d-1},  c  \in \RR \Big\}.
\end{equation}
By Lemma \ref{Theo.generate}, a random $d$-dimensional vector $X$ is not spherically symmetric (and hence the alternative hypothesis $H_1$ holds true) if and only if
\begin{eqnarray}\label{D}
\sup_{ f \in  \mathcal{F}} \big| P f  \big|  = \Delta_0(X) > 0.
\end{eqnarray}  
%
%
%
%
%
%
\medskip
Consider the empirical process 
\begin{equation}\label{eq:empiricalproc}
\mathbb{G}_n  f = \sqrt n (\PP_n - P) f
\end{equation}
indexed by $f \in \mathcal{F}$.
By Lemma \ref{Theo.generate}, 
\begin{equation}\label{eq:charac}
\mathbb{G}_n  f = \sqrt n \, \PP_n \,f\ \text{ for all } \ f \in \mathcal{F} \ \text{ if and only if } \ H_0 \text{ holds true.}
\end{equation}
%
%
\begin{theorem}\label{Weakconv}
	Assume that  $X$ is spherically symmetric and satisfies $\Ex[\|X\|^2_2]< \infty$. Consider the space $\ell^\infty(\mathcal{F})$ of all uniformly bounded functions on $\mathcal{F}$ with respect to the supremum norm. Then there exists a tight centered Gaussian process  $\mathbb{G}$ with covariance function 
	\begin{equation}\label{propG}
	\Ex\big[\mathbb G f_{u, v, c} \mathbb G f_{\tilde{u}, \tilde{v}, \tilde{c}}\big]  =   (v- (v^\top u) u)^\top \Ex \big[   X X^\top \ \mathds{1}_{u^\top X \ge c, \tilde{u}^\top X \ge \tilde{c}}\big] (\tilde{v}  - (\tilde{v}^\top \tilde{u}) \tilde{u}),
	\end{equation}
	such that the weak convergence 
	\begin{eqnarray*}
		\mathbb G_n \Rightarrow \mathbb G, \qquad n \to \infty, 
	\end{eqnarray*}
	in $\ell^\infty(\mathcal{F})$ holds true. 
\end{theorem}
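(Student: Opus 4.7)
The plan is to verify that $\mathcal{F}$ is a $P$-Donsker class with a square-integrable envelope, and then compute the covariance directly.

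\textbf{Step 1: Envelope.} Since $v-(v^\top u)u$ is the projection of $v$ onto $u^\perp$, we have $\|v-(v^\top u)u\|_2 \le \|v\|_2 = 1$, so $|f_{u,v,c}(x)| \le \|x\|_2$. Therefore $F(x) := \|x\|_2$ is a measurable envelope for $\mathcal{F}$ with $PF^2 = \Ex\|X\|_2^2 < \infty$ by assumption.

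\textbf{Step 2: Uniform entropy.} I would write each $f_{u,v,c} = L_{u,v} \cdot H_{u,c}$ as a product of a linear factor $L_{u,v}(x) = (v-(v^\top u)u)^\top x$ and an indicator $H_{u,c}(x) = \mathds{1}_{u^\top x \ge c}$. The class $\mathcal{L} = \{L_{u,v} : u,v \in \mathcal{S}_{d-1}\}$ lies inside the $d$-dimensional vector space of linear functionals on $\RR^d$, hence is a VC-subgraph class with envelope $F$. The class $\mathcal{H} = \{H_{u,c} : u \in \mathcal{S}_{d-1}, c \in \RR\}$ of closed half-space indicators is a bounded VC class. By the standard preservation results for uniform entropy under products (see Theorem 2.10.20 in van der Vaart and Wellner, or Lemma 16 in Nolan and Pollard), with envelope $F \cdot 1 = F$, the product class $\mathcal{F} = \mathcal{L}\cdot \mathcal{H}$ satisfies the uniform entropy bound
\begin{equation*}
\sup_Q \log N\bigl(\eps \|F\|_{Q,2},\, \mathcal{F},\, L_2(Q)\bigr) \lesssim \log(1/\eps), \quad 0<\eps<1,
\end{equation*}
where the supremum is over all finitely supported probability measures $Q$. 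In particular, the uniform entropy integral is finite.

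\textbf{Step 3: Donsker theorem.} Combined with pointwise measurability of $\mathcal{F}$ (which follows from continuity of $(u,v,c)\mapsto f_{u,v,c}(x)$ for each $x$ outside the null set $\{u^\top x = c\}$, allowing restriction to a countable dense parameter subset), and the square-integrability of $F$, an application of the uniform-entropy Donsker theorem (e.g.\ Theorem 2.5.2 in van der Vaart and Wellner, or the more general Theorem 2.11.1 cited in the paper) yields $\mathbb{G}_n \Rightarrow \mathbb{G}$ in $\ell^\infty(\mathcal{F})$ for a tight centered Gaussian process $\mathbb{G}$.

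\textbf{Step 4: Covariance.} Under $H_0$, Lemma \ref{Theo.generate} gives $Pf_{u,v,c} = 0$ for every $f_{u,v,c} \in \mathcal{F}$, so the limiting covariance equals $P(f_{u,v,c}\, f_{\tilde u,\tilde v,\tilde c})$. Expanding the product and pulling the deterministic vectors out of the expectation gives
\begin{equation*}
P\bigl(f_{u,v,c}\, f_{\tilde u,\tilde v,\tilde c}\bigr) = (v-(v^\top u)u)^\top \, \Ex\bigl[XX^\top \mathds{1}_{u^\top X\ge c,\,\tilde u^\top X\ge \tilde c}\bigr]\,(\tilde v-(\tilde v^\top \tilde u)\tilde u),
\end{equation*}
which is precisely \eqref{propG}.

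\textbf{Main obstacle.} The nontrivial point is handling the unbounded, linearly growing envelope simultaneously with the indicator factor; pure VC-subgraph arguments do not apply to products directly, so the crucial step is the preservation of the uniform entropy bound for $\mathcal{L}\cdot \mathcal{H}$ together with square-integrability of $F$, which is what feeds into the Donsker theorem.
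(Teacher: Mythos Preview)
Your proof is correct but takes a different route from the paper. The paper enlarges $\mathcal{F}$ to the class $\mathcal{G} = \{x \mapsto w^\top x\,\mathds{1}_{u^\top x \ge c} : w,u \in \RR^d,\, c \in \RR\}$ and shows \emph{directly} that $\mathcal{G}$ is VC-subgraph: each subgraph $\{(x,t) : g(x) > t\}$ is written as the union of two pieces---one where $u^\top x \ge c$ and $w^\top x > t$, the other where $u^\top x < c$ and $t < 0$---each of which is an intersection of half-spaces in $\RR^{d+1}$; Pollard's lemma on half-space collections and the stability of VC classes under finite unions (\cite{vdvwell}, Lemma~2.6.27) then finish the argument. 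You instead factor $f_{u,v,c}$ as a product $L_{u,v}\cdot H_{u,c}$, note that each factor class is VC-subgraph for elementary reasons, and invoke the preservation of uniform-entropy (Euclidean) bounds under products. Both routes feed into the same uniform-entropy Donsker theorem. The paper's approach yields the slightly stronger conclusion that the class is genuinely VC-subgraph, a fact that is reused verbatim in the proofs of Theorems~\ref{CondEmpProc} and~\ref{Th: empestim}; your approach is more modular and sidesteps any explicit computation with subgraphs, at the price of relying on a permanence lemma whose envelope and measurability hypotheses must be checked carefully (which you do). The covariance computation and the use of Lemma~\ref{Theo.generate} to remove the centering are identical in both.
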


The proof of Theorem \ref{Weakconv} is based on showing that $\mathcal{F}$ is a VC-subgraph class of functions, that is a collection of real-valued functions for which the subgraphs have a finite VC-index, see \citet[Section 2.6.2]{vdvwell}.

In the sequel, we adopt the usual notation $\Vert \nu_n \Vert_{\mathcal{D}}: = \sup_{f \in \mathcal{D}} \vert \nu_n(f) \vert$ for a stochastic process $\nu_n$ defined on a class of functions $\mathcal{D}$.  
Consider the statistic  
\begin{eqnarray}\label{Tn}
T_n :=  \sqrt{n} \Vert  \PP_n \Vert_{\mathcal{F}}.
\end{eqnarray} 
Given $\alpha \in (0,1)$ let $q_\alpha(\mathbb G)$ denote the $\alpha$-quantile of $\Vert  \mathbb G \Vert_{\mathcal{F}}$, which will depend on the distribution of $X$. 
%
%
An asymptotic  test based on $T_n$ rejects $H_0$ if $T_n > q_{1-\alpha}(\mathbb G)$. By \eqref{eq:charac} and Theorem \ref{Weakconv} such a test has asymptotic level $\alpha$. 
We show that the test is also consistent against fixed alternatives.   
%
\begin{prop}\label{cons}
	Under the alternative hypothesis, that is if $X$ does not have a spherically symmetric distribution, we have that 
	\begin{eqnarray*}
		\lim_{n \to \infty}   \PP \Big(T_n  > q_{1-\alpha}(\mathbb G) \Big)  =1.   
	\end{eqnarray*}
\end{prop}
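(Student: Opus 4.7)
The overall strategy is standard: show that the test statistic $T_n$ diverges almost surely under the alternative, while the critical value $q_{1-\alpha}(\mathbb{G})$ is a finite constant, so the rejection probability tends to $1$.

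The first step is to observe that, by Lemma \ref{Theo.generate} and the definition \eqref{D}, failure of spherical symmetry is equivalent to
\[
\Delta_0 \;:=\; \sup_{f \in \mathcal{F}} |Pf| \;>\; 0.
\]
(This tacitly uses $\Ex\|X\| < \infty$, so that $Pf$ is well-defined for every $f \in \mathcal{F}$; without a first moment, the test statistic itself would not make sense.) Since $\|v - (v^\top u) u\|_2 = \sqrt{1 - (v^\top u)^2} \le 1$, every $f_{u,v,c} \in \mathcal{F}$ satisfies $|f_{u,v,c}(x)| \le \|x\|_2$, so that the function $F(x) = \|x\|_2$ is an envelope for $\mathcal{F}$ with $PF < \infty$.

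The second step is to upgrade the VC-subgraph property of $\mathcal{F}$ (established in the proof of Theorem \ref{Weakconv}) to a uniform law of large numbers. Combined with the integrable envelope $F$, this makes $\mathcal{F}$ a $P$-Glivenko-Cantelli class, see \citet[Theorem 2.4.3]{vdvwell}. Consequently,
\[
\|\PP_n - P\|_{\mathcal{F}} \;\xrightarrow{\mathrm{a.s.}}\; 0.
\]
By the reverse triangle inequality,
\[
\frac{T_n}{\sqrt{n}} \;=\; \|\PP_n\|_{\mathcal{F}} \;\ge\; \|P\|_{\mathcal{F}} - \|\PP_n - P\|_{\mathcal{F}} \;\xrightarrow{\mathrm{a.s.}}\; \Delta_0 \;>\; 0,
\]
so $T_n \to \infty$ almost surely.

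The third step is to note that $q_{1-\alpha}(\mathbb{G})$ is a fixed finite number: because $\mathbb{G}$ is a tight Borel random element in $\ell^\infty(\mathcal{F})$ by Theorem \ref{Weakconv}, the random variable $\|\mathbb{G}\|_{\mathcal{F}}$ is almost surely finite and hence has finite quantiles. Combining this with the almost sure divergence of $T_n$ gives $\PP(T_n > q_{1-\alpha}(\mathbb{G})) \to 1$.

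The only non-cosmetic step is the Glivenko-Cantelli step, but given the VC-subgraph structure already used for Theorem \ref{Weakconv} together with the obvious integrable envelope $\|x\|_2$, this is a direct application of a standard result rather than a genuine obstacle.
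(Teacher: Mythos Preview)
Your proof is correct and follows essentially the same route as the paper: lower-bound $T_n$ via the reverse triangle inequality by $\sqrt{n}\,\Delta_0$ minus a fluctuation term, then show the fluctuation is negligible compared to $\sqrt{n}\,\Delta_0$. The only difference is in how the fluctuation is controlled: the paper uses the Donsker property (so $\sqrt{n}\,\|\PP_n - P\|_{\mathcal F} = O_P(1)$, requiring $\Ex\|X\|^2 < \infty$), whereas you use only the Glivenko--Cantelli property (so $\|\PP_n - P\|_{\mathcal F} \to 0$ a.s., requiring just $\Ex\|X\| < \infty$). Your version is therefore slightly more economical in its moment assumption; otherwise the arguments coincide.

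One small remark: invoking Theorem~\ref{Weakconv} for the finiteness of $q_{1-\alpha}(\mathbb G)$ is a little loose, since that theorem is stated under $H_0$; but the VC-subgraph property is distribution-free and the covariance in \eqref{propG} defines a tight Gaussian process whenever $\Ex\|X\|^2 < \infty$, so the conclusion stands. The paper's own proof has the same informal reliance on Theorem~\ref{Weakconv} under the alternative.
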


\subsection{The spherically symmetric bootstrap}\label{sec:spherboot}

Asymptotic quantiles of tests for spherical symmetry can be estimated by using the spherically symmetric bootstrap.
Indeed, if $\PP(X=0) = 0$, $X$ is spherically symmetric if and only if the random variable $X / \|X\|$ is uniformly distributed on $\mathcal{S}_{d-1}$, and is independent of $\|X\|$; see for example \cite{cambanis81}. Hence, we have the representation
\begin{eqnarray*}
	X  \stackrel{d}{=}   R\,  U , 
\end{eqnarray*}
where $U \sim \mathcal{U}(\mathcal{S}_{d-1})$ is uniformly distributed on $\mathcal{S}_{d-1}$ and $R \stackrel{d}{=} \|X\| $ is a positive random variable which is independent of $U$.   
Thus, in order to  provide new data under the null hypothesis of spherical symmetry,  we shall employ the spherically symmetric bootstrap as investigated in  \citet[Example 3]{romano1989bootstrap} and in \citet{koltchinskii1998testing}: We bootstrap the observed norms  $\Vert X_1 \Vert, \ldots, \Vert  X_n \Vert$ and multiply the obtained  values with vectors that are independently sampled from the unit sphere.  

More precisely, 
write $L^*_n$  for the empirical distribution of $R_1=\Vert X_1 \Vert, \ldots, R_n=\Vert X_n \Vert$. In the bootstrap procedure, we draw independent samples $(R^*_1, \ldots, R^*_n)$ from $L^*_n$ and $(W_1, \ldots, W_n)$  from the uniform distribution $ \mathcal{U}(\mathcal{S}_{d-1})$ on the $(d-1)$-dimensional unit sphere. For a class of functions $f  \in \cF_s$ for which $P f = 0$ vanishes if $P$ is a spherically symmetric distribution, the spherically symmetric bootstrapped empirical  process is defined as 
\begin{eqnarray}\label{Gnstar}
\mathbb{G}^* _n = \sqrt n \mathbb P^*_n, \  \ \textrm{where} \ \  \mathbb P^*_n  := n^{-1}  \sum_{i=1}^n \delta_{R^*_i  W_i}. 
\end{eqnarray}
The following result extends and complements previous consistency results for the spherically symmetric bootstrap by \citet{romano1989bootstrap} and \citet{koltchinskii1998testing}.


\begin{theorem}\label{CondEmpProc}
	Suppose that $X$ is spherically symmetric and that $\PP(X=0)= 0$. Consider a VC-subgraph class of functions $\mathcal{F}_s$ with a square-integrable envelope function $F$, such that $P f = 0$ for all $f \in \cF_s$.
	
	Let $R^*_1, \ldots, R^*_n$ be an i.i.d. sample from the empirical distribution of $R_1 = \Vert X_1 \Vert, \ldots, R_n =\Vert  X_n \Vert$. Also, let $(W_1, \ldots, W_n)$ be a random sample from $\mathcal{U}(\mathcal{S}_{d-1})$  taken to be  independent of $(R^*_1, \ldots, R^*_n)$.  Then, for almost all $X_1, X_2, \ldots $  it holds that
	\begin{eqnarray*}
		\mathbb{G}^*_n  \Rightarrow \mathbb{G}
	\end{eqnarray*}
	in $\ell^\infty(\mathcal{F}_s)$, where $\mathbb{G}$ is a tight Gaussian process on $\ell^\infty(\mathcal{F}_s)$ with covariance function $P(f_1\, f_2)$, $f_i \in \cF_s$. 	
\end{theorem}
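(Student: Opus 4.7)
The strategy is to fix a realisation of the data $X_1,X_2,\ldots$ outside a null set and to work conditionally on it. Conditionally, the bootstrap replicates $Z^*_i := R^*_i W_i$, $i=1,\ldots,n$, are i.i.d.\ with law $Q_n$, the push-forward of $L_n\otimes\mathcal{U}(\mathcal{S}_{d-1})$ under $(r,w)\mapsto rw$, where $L_n$ is the empirical distribution of the norms $R_1,\ldots,R_n$. Introducing the spherical average $g_h(r):=\int h(rw)\,\dd\mathcal{U}(w)$, one has $Q_n h = L_n g_h$; spherical symmetry of $P$ gives $Lg_h = Ph$, so the null condition $Ph=0$ is equivalent to $Lg_h=0$. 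I split
\begin{equation*}
\mathbb{G}^*_n f \;=\; \sqrt{n}\,(\mathbb{P}^*_n-Q_n)f \;+\; \sqrt{n}\,L_n g_f,
\end{equation*}
and treat the two pieces separately.

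For the first (conditionally centred) piece I would apply the triangular-array Donsker theorem \cite[Theorem~2.11.1]{vdvwell} to the row-wise i.i.d.\ processes $f\mapsto n^{-1/2}\{f(Z^*_i)-Q_n f\}$. The three hypotheses reduce, for almost every data sequence, to: (a) covariance convergence $Q_n(fh)-Q_n f\,Q_n h\to P(fh)$, obtained from the SLLN applied to $g_{fh}(R_1)$ together with $Lg_{fh}=P(fh)$; (b) the Lindeberg condition $nQ_n[F^2\mathds{1}_{F>\eta\sqrt n}]\to 0$, which follows from $Q_n F^2\to PF^2$ and a truncation of the square-integrable envelope $F$; and (c) a uniform-entropy/equicontinuity condition relative to the random pseudometric $\rho_n(f,h)^2 = Q_n(f-h)^2$. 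The VC-subgraph assumption combined with \cite[Theorem~2.6.7]{vdvwell} gives a covering-number bound $\log N(\eps\|F\|_{Q,2},\mathcal{F}_s,L_2(Q))\lesssim \log(1/\eps)$ that is uniform in $Q$, hence applicable to $Q=Q_n$; the corresponding entropy integral is finite and dominates the modulus of continuity, while $\rho_n\to\rho$ uniformly in $f,h$ by the same SLLN argument as in (a). Together with total boundedness of $(\mathcal{F}_s,\rho)$, this yields $\sqrt{n}(\mathbb{P}^*_n-Q_n)\Rightarrow\mathbb{G}$ conditionally almost surely.

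The remaining bias term $\sqrt{n}\, L_n g_f$ is an ordinary empirical process in the i.i.d.\ variables $R_i$ indexed by the composed class $\{g_f:f\in\mathcal{F}_s\}$, which inherits entropy and envelope bounds from $\mathcal{F}_s$ via Fubini. For the main class $\mathcal{F}$ in \eqref{classF}, a direct computation using the orthogonality $(v-(v^\top u)u)\perp u$ together with the symmetry of $\mathcal{U}(\mathcal{S}_{d-1})$ yields $g_f\equiv 0$, so the bias vanishes identically and combining with the first part concludes the proof. The principal technical obstacle is the verification of the entropy/equicontinuity condition (c) uniformly in the random measure $Q_n$: every other step reduces, via the representation $Q_n h = L_n g_h$, to an ordinary strong law or a truncation-based Lindeberg computation on the i.i.d.\ sequence $R_1,R_2,\ldots$, while the uniform entropy bound is the single ingredient that genuinely requires the VC-subgraph hypothesis rather than just Donskerness of $\mathcal{F}_s$.
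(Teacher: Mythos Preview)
Your approach is essentially the same as the paper's: both invoke the triangular-array Donsker theorem \cite[Theorem~2.11.1]{vdvwell} conditionally on the data and verify Lindeberg, equicontinuity/entropy, and covariance convergence via the strong law applied to functionals of the $R_i$. The only organisational difference is that the paper observes at the outset that $R^*W$ is spherically symmetric under $L^*_n\otimes\mathcal{U}$, so $Q_nf=0$ for every $f\in\mathcal{F}_s$ and no separate bias term appears; your split $\sqrt{n}(\mathbb{P}^*_n-Q_n)f+\sqrt{n}L_n g_f$ collapses to the same thing once you note that the hypothesis ``$Pf=0$ for spherically symmetric $P$'' forces $g_f(r)=\Ex_{\mathcal U}[f(rW)]=0$ for every $r$, not just for the specific class $\mathcal{F}$. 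Two small corrections: the Lindeberg condition should read $Q_n[F^2\mathds{1}_{F>\eta\sqrt n}]\to 0$ (your extra factor $n$ is a slip), and the sentence treating $\sqrt{n}L_n g_f$ as a nondegenerate empirical process is a red herring---if that term did not vanish identically it would be a \emph{deterministic} (data-measurable) drift and the conditional limit could not be the centred process $\mathbb{G}$.
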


\subsection{Discretization}\label{sec:discretealg}

For an actual implementation, the test based on the Kolmogorov-Smirnov statistic in \eqref{Tn} needs to be discretized, that is, the supremum has to be computed over a finite number of elements from $\cF$ in \eqref{classF}. 
We shall choose the points on the sphere uniformly at random, while we use a discretization of a sufficiently large interval as values for the parameter $c$.
More precisely, for an integer $N_u  \ge 1$ sample $ U_1, \ldots, U_{N_u}, V_1, \ldots, V_{N_u}$ independently from the uniform distribution  on $\mathcal{S}_{d-1}$ such that  they are also independent of $X_1, \ldots, X_n$.  Write 
\[ \underline{U}_{N_u} = \{U_1, \ldots, U_{N_u}\}\]
and similarly for $\underline{V}_{N_u} $.
Let also $c_0 \ge 2$ and $N_c \in \NN$, and consider the (random) family of functions 
\begin{eqnarray}\label{DN}
\mathcal{F}_{N_u, N_c, c_0}  & :=&  \Big \{f_{u, v, c} \mid u \in \underline{U}_{N_u}, \ v \in \underline{V}_{N_u},    \\
&& \hspace{0.6cm} c \in \{-c_0  +  2c_0 (j/N_c) \mid j =0, \ldots, N_c \} \Big \}.\notag
\end{eqnarray}
The discretized version of our test statistic $T_n$ in \eqref{Tn} will be
\[ \widetilde{T}_{n, {N_u, N_c, c_0}} : = \sqrt n \Vert  \PP_n \Vert_{\mathcal{F}_{N_u, N_c, c_0}}.\]
We shall derive the asymptotic distribution and consistency when $N_u, N_c, c_0 \to \infty$ as $n \to \infty$. 

To this end, consider the full index set $\Theta =  \mathcal{S}_{d-1} \times \mathcal{S}_{d-1} \times \RR$
for the function class $\mathcal{F} = \mathcal{F}_\Theta$ in \eqref{classF}. Given $\theta \in \Theta$ we let $n(\theta)$ denote the element $(u, v, c) \in \underline{U}_{N_u} \times \underline{V}_{N_u} \times \{-c_0  +  2c_0 (j/N_c) \mid j =0, \ldots, N_c \}$ of minimal distance to $\theta$. 
%
%
\begin{theorem}\label{convdiscrete}
	Assume that $\Ex[\|X\|^2_2]< \infty$, that $X$ has a continuous distribution, and that $N_u, N_c, c_0 \to \infty$ such that $c_0 / N_c \to 0$ as $n \to \infty$. 
	Then for the empirical process $\mathbb G_n$ in \eqref{eq:empiricalproc} we have that 
	\begin{eqnarray*}
		\{\mathbb G_n(f_{n(\theta)}) \mid \theta \in \Theta\} \Rightarrow \{\mathbb G(f_{\theta}) \mid \theta \in \Theta\}, \qquad n \to \infty 
	\end{eqnarray*}
	in $\ell^\infty(\Theta)$, where $\mathbb G$ is the Gaussian process from \eqref{propG}.
\end{theorem}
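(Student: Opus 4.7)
The plan is to deduce the claim from Theorem~\ref{Weakconv} by a discretization argument. Since the parametrization $\theta \mapsto f_\theta$ induces a sup-norm contraction $\ell^\infty(\mathcal{F}) \to \ell^\infty(\Theta)$, Theorem~\ref{Weakconv} immediately yields
\[
\bigl\{\mathbb{G}_n(f_\theta)\bigr\}_{\theta \in \Theta} \;\Rightarrow\; \bigl\{\mathbb{G}(f_\theta)\bigr\}_{\theta \in \Theta}
\]
in $\ell^\infty(\Theta)$. It therefore suffices to prove
\[
\sup_{\theta \in \Theta} \bigl|\mathbb{G}_n(f_{n(\theta)}) - \mathbb{G}_n(f_\theta)\bigr| \;=\; o_{P^*}(1),
\]
and then Slutsky's lemma concludes.

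For this residual bound I would invoke the standard equivalence between weak convergence in $\ell^\infty(\mathcal{F})$ and asymptotic $\rho_P$-equicontinuity of $\{\mathbb{G}_n\}$, where $\rho_P(f,g) = \|f - g\|_{L^2(P)}$ (see, e.g., van der Vaart and Wellner, Thm.~1.5.7). Because the samples $U_k, V_k$ are independent of the $X_i$, conditioning on the former reduces the task to showing
\[
\sup_{\theta \in \Theta} \rho_P\bigl(f_{n(\theta)}, f_\theta\bigr) \;\longrightarrow\; 0 \qquad \text{in probability.}
\]

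Write $\theta = (u,v,c)$ and $n(\theta) = (u^*, v^*, c^*)$. For the $c$-coordinate, if $|c| > c_0$ then $c^* = \pm c_0$ and the crude estimate
\[
\rho_P\bigl(f_{u,v,c}, f_{u,v,\pm c_0}\bigr)^2 \;\le\; 4\,\Ex\!\bigl[\|X\|^2 \mathds{1}_{\|X\| \ge c_0}\bigr]
\]
vanishes uniformly as $c_0 \to \infty$ by dominated convergence. For $|c| \le c_0$ the grid spacing $2c_0/N_c$ tends to $0$, and after truncating the integrand to $\{\|X\| \le M\}$ (the complement being absorbed by the moment assumption), the residual slab probability $P(u^\top X \in [c-\delta, c+\delta])$ is controlled uniformly in $(u,c) \in \mathcal{S}_{d-1} \times [-c_0,c_0]$ by the joint continuity of $(u,c) \mapsto P(u^\top X \le c)$ inherited from continuity of the distribution of $X$. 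For the $(u,v)$-coordinates, the fill distance $\max_{u \in \mathcal{S}_{d-1}} \min_k \|u - U_k\|$ converges almost surely to $0$ as $N_u \to \infty$, so a Lipschitz estimate on the coefficient vector $v - (v^\top u) u$, combined with the same slab-probability bound for the indicator difference (now on slabs of width $O(\|u - u^*\|\,\|X\|)$), transfers parameter-space closeness to $\rho_P$-closeness.

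The main obstacle is the uniform control of $\rho_P$ near the interface $|c| \approx c_0$, since $c_0$ grows with $n$: neither the compactness continuity argument (for bounded $c$) nor the tail moment bound (for large $c$) suffices in isolation, and the two must be interleaved via a truncation $\|X\| \le M$ that is sent to infinity at the end. All other estimates are routine once this interface has been handled.
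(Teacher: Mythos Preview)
Your approach is correct and takes a genuinely different route from the paper's. The paper does not pass through Theorem~\ref{Weakconv} at all; instead it applies the changing-classes central limit theorem (Theorem~19.28 in van der Vaart, \emph{Asymptotic Statistics}) directly to the sequence of random subclasses $\mathcal{F}_\Theta^{(n)} = \{f_{n(\theta)} : \theta \in \Theta\}$, conditionally on the grid $(U_k,V_k)$. Since each $\mathcal{F}_\Theta^{(n)}$ is contained in the fixed VC-class $\mathcal{F}$, the entropy and envelope conditions are inherited uniformly in $n$, and only \emph{pointwise} covariance convergence $P(f_{n(\theta)}f_{n(\tilde\theta)}) \to P(f_\theta f_{\tilde\theta})$ is needed, which follows from $f_{n(\theta)} \to f_\theta$ almost everywhere plus dominated convergence. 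No uniform bound on $\rho_P(f_{n(\theta)},f_\theta)$ is ever required.

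Your argument, by contrast, reduces everything to the uniform estimate $\sup_\theta \rho_P(f_{n(\theta)},f_\theta) \to 0$. This is a stronger intermediate statement, but it does hold and your sketch is essentially complete. The slab bound $\sup_{u,c} P(u^\top X \in [c-\delta,c+\delta]) \to 0$ follows from joint continuity of $(u,t) \mapsto P(u^\top X \le t)$ on $\mathcal{S}_{d-1}\times\RR$ together with a compactness--contradiction argument (the $t$-variable is automatically localized since the CDFs tend to $0$ and $1$ uniformly in $u$). Your concern about the interface $|c|\approx c_0$ is unfounded: for $|c|>c_0$ both $f_\theta$ and $f_{n(\theta)}$ live on $\{\|X\|\ge c_0\}$, so each is already small in $L^2(P)$ and the triangle inequality finishes that regime without any interleaving. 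The trade-off is that your argument is more elementary (no changing-classes machinery) but requires the uniform modulus estimate, whereas the paper's proof sidesteps that estimate entirely at the cost of invoking a somewhat heavier off-the-shelf theorem.
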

From this result we deduce consistency of the test based on the discretized test statistic. 
\begin{cor}\label{cor:discreteconst}
	Under the assumptions of Theorem \ref{convdiscrete}, if $X$ is spherically symmetric then for $\alpha>0$ we have that 
	\[ 		\lim_{n \to \infty}   \PP \Big(\widetilde{T}_{n, {N_u, N_c, c_0}}  > q_{1-\alpha}(\mathbb G) \Big)  =\alpha,   
	\]
	where as above $q_{1-\alpha}(\mathbb G)$ is the $1-\alpha$-Quantile of $\Vert  \mathbb G \Vert_{\mathcal{F}}$. 

	In contrast, if $X$ is not spherically symmetric then 
	\[ 		\lim_{n \to \infty}   \PP \Big(\widetilde{T}_{n, {N_u, N_c, c_0}}  > q_{1-\alpha}(\mathbb G) \Big) =  1.   
	\]
\end{cor}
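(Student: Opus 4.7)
The plan is to deduce both assertions from Theorem \ref{convdiscrete} by a continuous-mapping argument on $\ell^\infty(\Theta)$, reducing the level statement to the convergence of the discretized empirical process and the consistency statement to a simple $\sqrt n$-lower bound at a single well-chosen index.

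For the level claim, observe that under $H_0$, Lemma \ref{Theo.generate} yields $Pf_\theta = 0$ for every $\theta \in \Theta$, so that
\[
\widetilde{T}_{n, N_u, N_c, c_0} = \sqrt n \sup_{\theta \in \Theta}\bigl|\PP_n f_{n(\theta)}\bigr| = \sup_{\theta \in \Theta}\bigl|\mathbb{G}_n(f_{n(\theta)})\bigr|.
\]
Theorem \ref{convdiscrete} states $\{\mathbb{G}_n(f_{n(\theta)})\}_{\theta\in\Theta} \Rightarrow \{\mathbb{G}(f_\theta)\}_{\theta\in\Theta}$ in $\ell^\infty(\Theta)$, and $x \mapsto \sup_\theta |x(\theta)|$ is continuous on $\ell^\infty(\Theta)$; the continuous-mapping theorem then gives $\widetilde{T}_{n, N_u, N_c, c_0} \Rightarrow \|\mathbb{G}\|_{\mathcal{F}}$. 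Under $H_0$ the covariance formula \eqref{propG} shows that $\mathbb{G}$ is not the zero process, hence by standard results on Gaussian suprema (absolute continuity in the sense of Tsirelson--Ibragimov--Sudakov) the distribution function of $\|\mathbb{G}\|_{\mathcal{F}}$ is continuous on $(0,\infty)$. In particular $q_{1-\alpha}(\mathbb{G})$ is a continuity point, so $\PP(\widetilde{T}_{n, N_u, N_c, c_0} > q_{1-\alpha}(\mathbb{G})) \to \alpha$.

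For the consistency claim, Lemma \ref{Theo.generate} furnishes some $\theta^* = (u^*, v^*, c^*) \in \Theta$ with $\delta := |Pf_{\theta^*}| > 0$. Continuity of the distribution of $X$ together with $\Ex[\|X\|^2_2] < \infty$ implies, via dominated convergence, that $\theta \mapsto Pf_\theta$ is continuous on $\Theta$, since $\PP(u^\top X = c) = 0$ for any $u \in \mathcal{S}_{d-1}$. As $N_u \to \infty$ the i.i.d.\ uniform samples $\underline U_{N_u}, \underline V_{N_u}$ are almost surely dense in $\mathcal{S}_{d-1}$; as $c_0 \to \infty$ with $c_0/N_c \to 0$, the $c$-grid eventually covers $c^*$ with vanishing mesh. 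Hence $n(\theta^*) \to \theta^*$ almost surely, so that $|Pf_{n(\theta^*)}| \ge \delta/2$ for all $n$ large. Combining this with the Glivenko--Cantelli property of the VC-subgraph class $\mathcal{F}$ (established in the proof of Theorem \ref{Weakconv}),
\[
\widetilde{T}_{n, N_u, N_c, c_0} \;\ge\; \sqrt n \,\bigl|\PP_n f_{n(\theta^*)}\bigr| \;\ge\; \sqrt n\,\delta/4 \;\longrightarrow\; \infty
\]
almost surely, so the rejection probability tends to one.

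The main obstacle will be the coupled handling of the two limits in the alternative: the index $n(\theta^*)$ is itself random through $(U_i), (V_i)$ and depends on the parameters $N_u, N_c, c_0$ that vary with $n$. I would dispatch this by fixing once and for all a probability-one event on which the discretization points become dense in $\Theta$ and on which $\sup_{\mathcal{F}}|\PP_n - P| \to 0$; combined with the continuity of $\theta \mapsto Pf_\theta$, this yields the required almost-sure lower bound on $\widetilde{T}_{n, N_u, N_c, c_0}$.
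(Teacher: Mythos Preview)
Your argument is correct. The level part is essentially the same as the paper's: both invoke Theorem \ref{convdiscrete}, note that $Pf_\theta=0$ under $H_0$, apply the continuous mapping theorem to the supremum, and use continuity of the law of $\|\mathbb G\|_{\mathcal F}$ at its quantile.

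For consistency you take a genuinely different route. The paper conditions on the $\sigma$-field $\mathcal A_{U,V}$ generated by the $U_i,V_i$, works with the full discretized supremum $\|Pf\|_{\mathcal F_\Theta^{(n)}}$ (which increases to $\Delta_0(X)$ almost surely), and controls $\sqrt n\,\|(\PP_n-P)f\|_{\mathcal F_\Theta^{(n)}}$ via the Donsker-level conclusion of Theorem \ref{convdiscrete}; this mirrors the proof of Proposition \ref{cons}. You instead fix one index $\theta^*$ with $|Pf_{\theta^*}|>0$, use continuity of $\theta\mapsto Pf_\theta$ (from dominated convergence and absolute continuity of $X$) together with almost-sure density of the discretization to get $|Pf_{n(\theta^*)}|\ge\delta/2$, and then invoke only the Glivenko--Cantelli property of the VC-class $\mathcal F$ to pass from $P$ to $\PP_n$. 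Your path is more elementary (a law of large numbers rather than a Donsker statement suffices), at the cost of requiring the continuity argument for $\theta\mapsto Pf_\theta$; the paper's path reuses machinery already established and stays at the level of suprema throughout. Both handle the coupled randomness of $(U_i,V_i)$ and $(X_i)$ correctly, since these are independent and the two almost-sure events intersect on a set of full probability.
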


The bootstrap version of the discretized test statistic is given by 
\begin{eqnarray*}
	\widetilde{T}^*_{n, N_u, N_c, c_0}  =  \Vert  \mathbb G^*_n \Vert_{\mathcal{F}_{N_u, N_c, c_0}},
\end{eqnarray*}
where $\mathbb G^*_n$ is defined in \eqref{Gnstar} and $\mathcal{F}_{N_u, N_c, c_0}$  in \eqref{DN}.
%
%
The results in Corollary \ref{cor:discreteconst} extend to $\widetilde{T}^*_{n, N_u, N_c, c_0}$. We refrain from providing the formal details, which would require an extension of Theorem \ref{CondEmpProc} to the situation of changing functions classes as in Theorem \ref{convdiscrete}.

Algorithm \ref{algo1} describes how to simulate from the distribution of $\widetilde{T}^*_{n, N_u, N_c, c_0}$, conditionally on $X_1, \ldots, X_n$, to obtain a critical value. 

%
%
%
%

\begin{algorithm}[h!]
	
	\KwData{$X_1, \ldots, X_n$, $N_u$, $N_c$, $c_0$ and $B$}
	
	\medskip
	\KwResult{$\widetilde{T}^{1*}_{n, N_u, N_c, c_0}, \ldots, \widetilde{T}^{B*}_{n, N_u, N_c, c_0}$}
	
	\medskip
	
	$b =0$ \;
	\While{$b  < B$ }{
		
		\medskip
		$b \gets   b +1$ \;
		
		\medskip
		draw $W^b_1, \ldots, W^b_n$  i.i.d. $\sim \mathcal{U}(S_{d-1})$; 
		
		\medskip
		sample with replacement $R^{b*}_1, \ldots, R^{b*}_n$ from $\{ \Vert X_1 \Vert, \ldots, \Vert X_n \Vert \}$
		independently of $(W^b_1, \ldots, W^b_n)$;
		
		\medskip
		define $X^{b*}_1:=R^{b*}_1  \  W_1, \ldots,   X^{b*}_n:=R^{b*}_n  \  W_n$;

		\medskip
		draw $U^b_1, \ldots, U^b_{N_u}, V^b_1, \ldots, V^b_{N_u} $ i.i.d. $\sim \mathcal{U}(\mathcal{S}_{d-1})$;
		
		\medskip
		compute $\widetilde{T}^{b*}_{n, N_u, N_c, c_0}$ in the same way as $\widetilde{T}_{n, N_u, N_c, c_0}$ replacing
		$(X_1, \ldots, X_n)$ with $(X^{b*}_1, \ldots, X^{b*}_n)$ and $(U_1, \ldots, U_{N_u}, V_1, \ldots, U_{N_u})$ with $(U^b_1, \ldots, U^b_{N_u},  V^b_1, \ldots, V^b_{N_u} )$;

		\medskip
		store    $\widetilde{T}^{b*}_{n, N_u, N_c, c_0}$;
	}
	compute the empirical $(1-\alpha)$-quantile $q^B_{1-\alpha, n, N_u, N_c, c_0}$ based on the sample $(\widetilde{T}^{1*}_{n, N_u, N_c, c_0}, \ldots, \widetilde{T}^{B*}_{n, N_u, N_c, c_0})$.
	
	\medskip
	\caption{Bootstrap procedure for producing $\widetilde{T}^{b*}_{n, N_u, N_c, c_0}, b=1, \ldots, B$ and the empirical quantile $q^B_{1-\alpha, n, N_u, N_c, c_0}$. }
	\label{algo1}
	
\end{algorithm}

\section{Extensions to testing elliptical symmetry}\label{sec:extension}

Elliptical symmetry is tightly connected to spherical symmetry. If the random vector $X \in \RR^d$ has a non-singular covariance matrix $\Sigma_0$ and mean $\mu_0$, then it is elliptically symmetric if and only if the representation
\begin{equation}\label{eq:reprellip}
	X  = \mu_0 + \Sigma_0^{1/2} Y
\end{equation}
holds true for a spherically symmetric $Y$. In the following, we consider testing the hypothesis 
\begin{eqnarray*}
	H_0': X \ \textrm{is elliptically symmetric} \  \quad  \textrm{versus} \ \quad H_1': X \ \textrm{is not.}
\end{eqnarray*}
We shall restrict ourselves to investigating the asymptotic distribution of the proposed test statistics under the null hypothesis, and hence will always assume that $X$ is elliptically symmetrically distributed, and $Y$ will correspond to the representation in \eqref{eq:reprellip}.  
We let $X_1, \ldots, X_n$ be i.i.d.~with the distribution of $X$ and denote the associated empirical probability measure by $\PP_n$. We shall write $P$ for the distribution of $X$. If $(f_a)$ is a family of $P$-integrable functions, and $\hat a_n$ depends on $X_1, \ldots, X_n$, we write 
\[ P f_{\hat a_n} : = E[f_{\hat a_n}(X)]:= \int_{\RR^d}\, f_{\hat a_n}(x)\, \dd P(x),\]
a random variable depending on $X_1, \ldots, X_n$. As above, $ \mathbb{G}_n  f = \sqrt n (\PP_n - P) f$ denotes the empirical process. Finally, $\PP$ and $\Ex$ are probability and expected value operator on the space where $X, X_1, X_2, \ldots $ are defined.

\subsection{Convergence of the empirical process}\label{sec:convprop}

Consider the empirical mean and covariance matrix
%
\begin{eqnarray*}
	\bar{X}_n = n^{-1}  \sum_{i=1}^n  X_i \  \  \ \textrm{and} \ \ \ \widehat{\Sigma}_{n}  = n^{-1}  \sum_{i=1}^n (X_i - \bar{X}_n)\,  (X_i - \bar{X}_n)^\top.
\end{eqnarray*}
In the following we shall assume that $X$ is absolutely continuously distributed, in which case $\widehat{\Sigma}_{n}$ is non-singular for $n \geq d+1$ with probability $1$, see \cite{gupta1971nonsingularity}.
%
Therefore, roughly speaking we can reduce tests for elliptical symmetry to testing for spherical symmetry of the standardized random variables
\begin{eqnarray}\label{Trans}
\widehat{X}_i =  \widehat{\Sigma}^{-1/2}_n  (X_i -  \bar{X}_n), \qquad i  =1, \ldots, n. 
\end{eqnarray}
We start with the following theorem, which is based on results from \cite{vdvwell2007} for dealing with empirical processes involving estimated functions. 
We let $\eta = (A, \mu) \in \mathcal{E}$, where $A$ ranges through positive-definite $d \times d$-matrices, $\mu \in \RR^d$, and as before we let 
\[ \theta = (u,v,c)\in \Theta= \mathcal{S}_{d-1} \times \mathcal{S}_{d-1} \times \RR.\]
Consider functions on $\RR^d$ defined by
\begin{eqnarray}\label{eq:fthetaeta}
f_{\theta,\eta}(x) = (v - (v^\top u)\, u)^\top A \, (x - \mu)\,  \mathds{1}_{u^\top  A (x - \mu)  \ge c},
\end{eqnarray}
where $\theta = (u,v,c) \in \Theta$ and $\eta = (A, \mu)\in \mathcal{E}$. Recall the empirical process $\mathbb{G}_n$ from \eqref{eq:charac}.
\begin{theorem}\label{Th: empestim}
	If $X$ is absolutely continuously distributed with $\Ex[\|X\|^2]< \infty$, setting 
	\[ 		\eta_0 =  \big(\Sigma_0^{-1/2}, \mu_0\big) \quad  \text{ and } \quad \widehat{\eta}_n =  (\widehat{\Sigma}_n^{-1/2}, \bar{X}_n)\]
	we have as $n \to \infty$ that
	\begin{eqnarray*}
		\sup_{\theta \in \Theta} \Big \vert \mathbb{G}_n  (f_{\theta,\widehat{\eta}_n } -  f_{\theta,\eta_0})  \big \vert \to  0 \qquad \text{	in probability.}
	\end{eqnarray*}
\end{theorem}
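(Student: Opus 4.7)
The plan is to invoke the framework of \citet{vdvwell2007} on empirical processes indexed by estimated functions. First, I would enlarge the function class from Theorem \ref{Weakconv} by allowing $\eta$ to vary in a small bounded neighborhood $\mathcal{E}_0 = \{(A, \mu) : \|A - \Sigma_0^{-1/2}\|_{\mathrm{op}} + \|\mu - \mu_0\| \leq \delta_0\}$ of $\eta_0$, and consider
\[
\mathcal{G}_0 = \{f_{\theta,\eta} : \theta \in \Theta,\ \eta \in \mathcal{E}_0\}.
\]
After the substitution $\tilde u = A^\top u$, $\tilde c = c + \tilde u^\top \mu$, each $f_{\theta,\eta}$ still has the structural form ``linear function of $x$ times a halfspace indicator'' with additional finite-dimensional parameters, so the VC-subgraph argument underlying Theorem \ref{Weakconv} extends directly to $\mathcal{G}_0$. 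An envelope of the form $F(x) = C(1+\|x\|)$ is square-integrable under $\Ex\|X\|^2<\infty$, so $\mathcal{G}_0$ is $P$-Donsker.

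Next, $\widehat{\eta}_n \to \eta_0$ in probability follows from the law of large numbers applied to $\bar X_n$ and $\widehat\Sigma_n$, together with continuity of $A \mapsto A^{-1/2}$ on the positive-definite cone; the required invertibility of $\widehat\Sigma_n$ for $n \ge d+1$ is ensured by absolute continuity of $X$ via \cite{gupta1971nonsingularity}. In particular, $\widehat{\eta}_n \in \mathcal{E}_0$ with probability tending to one.

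The main step, and the main obstacle, is to establish the uniform $L^2$ continuity
\[
\lim_{\delta \downarrow 0}\ \sup_{\theta \in \Theta}\ \sup_{\|\eta - \eta_0\| \leq \delta}\ P\bigl(f_{\theta,\eta} - f_{\theta,\eta_0}\bigr)^2 \ = \ 0.
\]
Writing $w = v - (v^\top u)u$ and $\xi_\eta(x) = A(x - \mu)$, I would use the decomposition
\[
f_{\theta,\eta} - f_{\theta,\eta_0} \ = \ w^\top\bigl(\xi_\eta - \xi_{\eta_0}\bigr)\mathds{1}_{u^\top \xi_\eta \ge c} \ +\ w^\top \xi_{\eta_0}\bigl(\mathds{1}_{u^\top \xi_\eta \ge c} - \mathds{1}_{u^\top \xi_{\eta_0} \ge c}\bigr).
\]
The first summand is $L^2$-small uniformly in $\theta$ because $\|w\| \le 1$ and $\|\xi_\eta - \xi_{\eta_0}\|_{L^2(P)}$ is controlled by $\Ex\|X\|^2<\infty$ together with $\|\eta-\eta_0\|$. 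For the second summand I would truncate on $\{\|X\| \le M\}$, so the pre-indicator factor becomes bounded, and then control $\sup_{u,c} P(\{u^\top \xi_\eta \ge c\} \triangle \{u^\top \xi_{\eta_0} \ge c\})$ by combining absolute continuity of $X$ (which makes each marginal distribution function of $u^\top \xi_{\eta_0}$ continuous) with the VC property of the halfspace class to transfer pointwise continuity to uniform control in $(u,c)$; the complement $\{\|X\|>M\}$ is rendered negligible by choosing $M$ large and invoking square-integrability.

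Putting everything together, $\sup_\theta \|f_{\theta,\widehat{\eta}_n} - f_{\theta,\eta_0}\|_{L^2(P)} \to 0$ in probability. The asymptotic equicontinuity of $\mathbb{G}_n$ on the $P$-Donsker class $\mathcal{G}_0$, applied along the random pairs $((\theta,\widehat{\eta}_n),(\theta,\eta_0))$, then yields $\sup_\theta |\mathbb{G}_n(f_{\theta,\widehat{\eta}_n} - f_{\theta,\eta_0})| \to 0$ in probability, which is the claim.
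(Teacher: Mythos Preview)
Your approach is essentially the paper's: invoke \citet{vdvwell2007}, verify that the enlarged family $\{f_{\theta,\eta}\}$ is $P$-Donsker, and establish $\sup_\theta P(f_{\theta,\widehat\eta_n}-f_{\theta,\eta_0})^2\to 0$ via exactly the two-term decomposition you write down (linear part on a common indicator, plus difference of indicators), with truncation on $\{\|X\|\le M\}$ for the second term.

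Two minor implementation differences are worth recording. First, for the Donsker step the paper does not localize $\eta$ but rewrites $f_{\theta,\eta}(x)=k^\top x\,\mathds 1_{s^\top x\ge c'}+c''\,\mathds 1_{s^\top x\ge c'}$ and places the class inside $\mathcal G+\mathcal G$ with $\mathcal G$ from \eqref{ClassG}; your restriction to a bounded neighborhood $\mathcal E_0$ is arguably cleaner because it directly furnishes the square-integrable envelope $C(1+\|x\|)$. Second, for the uniform control of the halfspace symmetric difference the paper exploits the standing null hypothesis of Section~\ref{sec:extension}: since $Y=\Sigma_0^{-1/2}(X-\mu_0)$ is spherically symmetric, $u^\top Y\stackrel d= V$ for every $u\in\mathcal S_{d-1}$, and uniformity in $(u,c)$ reduces to uniform continuity of the single CDF $F_V$. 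Your route via absolute continuity alone is more general, but the ``VC property of halfspaces'' is not the right lever for the uniform step; what actually gives $\sup_{u,c}P(c-\epsilon\le u^\top\xi_{\eta_0}\le c+\epsilon)\to 0$ is continuity of $(u,c)\mapsto P(u^\top\xi_{\eta_0}\le c)$ on the compact set $\mathcal S_{d-1}\times[-\infty,\infty]$ (e.g.\ via $u_n^\top X\to u^\top X$ a.s.\ and Polya's theorem), hence uniform continuity.
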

If $X$ is elliptically symmetric, then $P f_{\theta, \eta_0}=0$ for all $\theta \in \Theta$, and hence an asymptotic test for elliptical symmetry can again be based on a Kolmogorov-Smirnov type statistic
\[ T_{n,e} = \sup_{\theta \in \Theta} \, \big| \sqrt n \, \mathbb P_n f_{\theta, \widehat \eta_n}\big|. \]
%
%
However, estimation of the parameters in $\eta_0$ induces an additional drift term in the limit process, as described in the following result. 
\begin{cor}\label{Weakconvresc}
	Suppose that  $X$ is elliptically symmetric such that $\Ex[\Vert X \Vert^4 ] < \infty$ and that $X$ admits a continuous Lebesgue density. Furthermore, assume that the density $g$ of any coordinate of $Y  = \Sigma^{-1/2}_0 (X- \mu_0)$ satisfies $\sup_{v \in \RR} \vert v \vert  g(v)  < \infty$. Then,  as $n \to \infty$
	\begin{eqnarray*}
		\{ \sqrt n \mathbb P_n f_{\theta, \widehat \eta_n}\mid \theta \in \Theta\}   \Rightarrow \ \{\mathbb{G} f_{\theta, \eta_0}  +   \mathbb{L}(\theta)\mid \theta \in \Theta\},
	\end{eqnarray*}
	in the space $\ell^\infty(\Theta)$, a Gaussian process in which $\mathbb G$ is defined in Theorem \ref{Weakconv}, and 
	\begin{eqnarray}\label{Drift}
	\mathbb{L}(\theta) 	& = &  - (w^\top \Sigma_0^{-1/2}\, \mathbb{D}) \, \PP(V > c) + \, w^\top \big(\mathbb{S} \Sigma^{1/2}_0  + \Sigma^{1/2}_0\mathbb{S} \big) u \, \Ex[V \mathds{1}_{V > c}], \nonumber \\
	&& 
	\end{eqnarray}
	where $w= v  - (v^\top u)\, u$, $V$ is distributed as any coordinate of $Y$ in \eqref{eq:reprellip}, and $ \mathbb D$ and $\mathbb{S} $ are as in Lemma \ref{ConvSigma}. 
\end{cor}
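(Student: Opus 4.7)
The plan is to decompose
\begin{align*}
\sqrt n\, \PP_n f_{\theta, \widehat \eta_n}  =  \mathbb G_n f_{\theta, \widehat \eta_n}  +  \sqrt n\, P f_{\theta, \widehat \eta_n}
\end{align*}
and treat the empirical fluctuation $\mathbb G_n f_{\theta, \widehat \eta_n}$ and the deterministic drift $\sqrt n\, P f_{\theta, \widehat \eta_n}$ separately, before gluing them via joint convergence with the asymptotically linear sequence $\sqrt n(\widehat \eta_n - \eta_0)$ supplied by Lemma \ref{ConvSigma}.

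For the first piece, Theorem \ref{Th: empestim} gives $\sup_\theta |\mathbb G_n f_{\theta, \widehat \eta_n} - \mathbb G_n f_{\theta, \eta_0}| = o_\PP(1)$. Since $Y = \Sigma_0^{-1/2}(X - \mu_0)$ is spherically symmetric and $f_{\theta,\eta_0}(x) = (v -(v^\top u)u)^\top y\, \mathds{1}_{u^\top y \ge c}$ with $y=\Sigma_0^{-1/2}(x-\mu_0)$, Theorem \ref{Weakconv} applied to $Y$ yields $\mathbb G_n f_{\cdot,\eta_0} \Rightarrow \mathbb G f_{\cdot,\eta_0}$ in $\ell^\infty(\Theta)$ with the covariance stated in \eqref{propG}.

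For the drift term I will perform a uniform-in-$\theta$ Taylor expansion of $\eta \mapsto P f_{\theta, \eta}$ around $\eta_0$. Using $X = \mu_0 + \Sigma_0^{1/2} Y$, writing $\eta = (A, \mu)$ and $w = v - (v^\top u)u$ (so $w^\top u = 0$), one has
\begin{align*}
P f_{\theta, \eta} = \Ex\bigl[ w^\top \bigl(A\Sigma_0^{1/2} Y + A(\mu_0 - \mu)\bigr)\, \mathds{1}_{u^\top \bigl(A\Sigma_0^{1/2} Y + A(\mu_0 - \mu)\bigr) \ge c}\bigr].
\end{align*}
Differentiating at $\eta_0$ in $\mu$, the boundary contribution from the indicator is $\Ex[w^\top Y \mid u^\top Y = c]$ times the marginal density of $u^\top Y$, which vanishes by the Eaton characterization in Lemma \ref{Theo.generate} since $w^\top u = 0$. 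Only the outer factor survives, producing $-w^\top \Sigma_0^{-1/2}\, \PP(V > c)$; pairing with $\sqrt n(\bar X_n - \mu_0) \Rightarrow \mathbb D$ gives the first summand of $\mathbb L(\theta)$. Differentiating in $A$ and setting $B = A\Sigma_0^{1/2}$, the identity $\Ex[(w^\top Y) Y_j \mid u^\top Y = c] = w_j\, \sigma^2(c)$ (obtained from the spherical symmetry of $Y$ conditional on $u^\top Y$, using the isotropy in $u^\perp$) collapses the matrix derivative to $w^\top (\Delta \Sigma_0^{1/2} + \Sigma_0^{1/2}\Delta)u\, \Ex[V \mathds{1}_{V > c}]$; plugging $\Delta \leadsto \mathbb S$ via the chain rule linking $\widehat\Sigma_n^{-1/2}$ to $\widehat\Sigma_n^{1/2}$ gives the second summand.

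The main obstacle will be the uniform-in-$\theta$ control of the Taylor remainder and the rigorous differentiation through the indicator. Continuity of the density of $Y$ ensures smoothness of $\eta \mapsto P f_{\theta,\eta}$; the hypothesis $\sup_{v \in \RR}|v|\, g(v) < \infty$ provides uniform bounds on the boundary term produced when the indicator's derivative meets the linear factor $w^\top A(x-\mu)$, and the fourth moment assumption on $X$ controls the quadratic remainder via $\sqrt n\,\|\widehat \eta_n - \eta_0\|^2 = O_\PP(n^{-1/2})$. Joint weak convergence of $(\mathbb G_n f_{\cdot,\eta_0}, \sqrt n(\widehat \eta_n - \eta_0))$ to $(\mathbb G f_{\cdot,\eta_0}, (\mathbb D, \mathbb S))$ follows from the asymptotic linearization of $\widehat \eta_n$ together with the empirical process CLT, and a Slutsky-type argument in $\ell^\infty(\Theta)$ then delivers the claimed limit $\mathbb G f_{\theta, \eta_0} + \mathbb L(\theta)$.
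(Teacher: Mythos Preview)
Your proposal is correct and follows essentially the same route as the paper: the identical decomposition $\sqrt n\,\PP_n f_{\theta,\widehat\eta_n}=\mathbb G_n(f_{\theta,\widehat\eta_n}-f_{\theta,\eta_0})+\mathbb G_n f_{\theta,\eta_0}+\sqrt n\,P f_{\theta,\widehat\eta_n}$, Theorem~\ref{Th: empestim} and Theorem~\ref{Weakconv} for the first two pieces, and Lemma~\ref{ConvSigma} for the joint Gaussian limit. The only presentational difference is in the drift: the paper expands $P f_{\theta,\widehat\eta_n}$ by direct algebra, splitting into terms $I_n$ (from the linear factor) and $II_n$ (from the indicator difference), whereas you phrase the same first-order expansion as a Fr\'echet derivative in $\eta$. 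Both exploit $w^\top u=0$ and spherical symmetry of $Y$ to kill the unwanted terms; in particular, your boundary contribution $g(c)\sigma^2(c)\,w^\top\Sigma_0^{1/2}\mathbb S\,u$ matches the paper's $w^\top\Sigma_0^{1/2}\mathbb S\,u\,\Ex[V\mathds 1_{V>c}]$ via the identity $g(c)\,\Ex[(w^\top Y)^2\mid u^\top Y=c]=\Ex[V\mathds 1_{V>c}]$, valid for spherically symmetric $Y$ (an integration by parts in the radial density), which you should make explicit when you write this out.
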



\subsection{Modifying the spherically symmetric bootstrap}\label{sec:bootelli}

Corollary \ref{Weakconvresc} shows that we cannot simply apply the spherically symmetric bootstrap from Section \ref{sec:spherboot} to the standardized variables $\widehat{X}_i$. Rather, as in \cite{koltchi2000} the bootstrapped sample needs to be standardized again before computing the Kolmogorov-Smirnov statistic.
More precisely, sample with replacement from the empirical distribution of the norms of $\widehat{R}_i = \Vert \widehat{X}_i \Vert, i =1, \ldots, n $ to obtain $\widehat{R}^*_1, \ldots, \widehat{R}^*_n$. Then, draw a random sample  $(W_1, \ldots, W_n)$  from the unit sphere $\mathcal{S}_{d-1}$, independently of $(\widehat{R}^*_1, \ldots, \widehat{R}^*_n)$ and set
\begin{eqnarray*}
	\widetilde{X}^*_i  =  \widehat{R}^*_i \,  W_i \quad \text{and} \quad \widehat{X}^*_i =  \widehat{\Sigma}^{*\, -1/2}_n \, (\widetilde{X}^*_i -  \bar{X}^*_n),
\end{eqnarray*}
where $\bar{X}^*_n$ and $\widehat{\Sigma}^*_n$ are empirical mean and covariance matrix of $\widetilde{X}^*_1, \ldots, \widetilde{X}^*_n$. 
The following theorem implies consistency of this modified procedure we have just described.
\medskip
\begin{theorem}\label{TheoModifBoot}
	Let $\widehat{\mathbb P}^*_n$ be the empirical process based on $\widehat{X}^*_1, \ldots, \widehat{X}^*_n$. Then,  under the same conditions of Corollary \ref{Weakconvresc}, we have conditionally of $X_1, X_2, \ldots$ that
	\begin{eqnarray*}
		\{\sqrt n \widehat{\mathbb P}^*_n f_\theta \mid \theta \in \Theta\} \Rightarrow \{\mathbb G f_{\theta, \eta_0}  + \mathbb L(\theta)\mid \theta \in \Theta\}
	\end{eqnarray*}
	in the space $\ell^\infty(\Theta)$. Here, $\mathbb G$ and the drift $\mathbb L$ are  the same Gaussian process and drift as in Corollary \ref{Weakconvresc},  $f_\theta = f_{u,v,c}$ is defined in \eqref{classF} and $f_{\theta,  \eta_0}$ in \eqref{eq:fthetaeta}. 
\end{theorem}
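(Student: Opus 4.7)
The plan is to mirror the argument leading to Corollary \ref{Weakconvresc} inside the bootstrap world, using the fact that, conditionally on $X_1, X_2, \ldots$, the variables $\widetilde X^*_1, \ldots, \widetilde X^*_n$ are i.i.d.\ spherically symmetric by construction. Write $P^*_n$ for the conditional law of a single $\widetilde X^*_i$ given the sample, and set
\[ \sigma_n^2 = \frac{1}{nd}\sum_{i=1}^n \widehat R_i^2, \qquad \eta^*_0 = \bigl(\sigma_n^{-1} I_d,\, 0\bigr). \]
Then $\sigma_n^2 I_d$ is the conditional covariance of $\widetilde X^*_i$, so $\sigma_n^{-1}\widetilde X^*_i$ is spherically symmetric with identity covariance under $P^*_n$; in particular $P^*_n f_{\theta,\eta^*_0}=0$ for every $\theta\in\Theta$ by Lemma \ref{Theo.generate}, and a strong law of large numbers combined with the consistency of $\widehat\Sigma_n$ and $\bar X_n$ gives $\sigma_n\to 1$ almost surely. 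Writing $\widehat\eta^*_n=\bigl(\widehat\Sigma^{*\,-1/2}_n,\bar X^*_n\bigr)$ so that $\widehat{\mathbb P}^*_n f_\theta = \mathbb P^*_n f_{\theta,\widehat\eta^*_n}$, I would work from the decomposition
\[ \sqrt n\,\mathbb P^*_n f_{\theta,\widehat\eta^*_n} = \sqrt n\,(\mathbb P^*_n-P^*_n)f_{\theta,\eta^*_0} + \sqrt n\,(\mathbb P^*_n-P^*_n)\bigl(f_{\theta,\widehat\eta^*_n}-f_{\theta,\eta^*_0}\bigr) + \sqrt n\, P^*_n f_{\theta,\widehat\eta^*_n}. \]

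For the first (empirical process) term I would apply the spherically symmetric bootstrap Donsker theorem (Theorem \ref{CondEmpProc}) to the slowly drifting class $\{f_{\theta,\eta^*_0}:\theta\in\Theta\}$. This class is VC-subgraph uniformly in $n$ (it consists of affine reparameterizations of $\mathcal F$) and admits a uniformly square-integrable envelope, so essentially the same proof yields conditional convergence, for almost every realization of $X_1,X_2,\ldots$, to the Gaussian process $\mathbb G f_{\theta,\eta_0}$; the limiting covariance matches \eqref{propG} because $\sigma_n\to 1$ and $f_{\theta,\eta_0}(X_i)=f_\theta(Y_i)$. The second term is negligible: arguing in the spirit of Theorem \ref{Th: empestim}, the family $\{f_{\theta,\eta}:\eta\in V_n\}$ on a shrinking neighbourhood $V_n$ of $\eta^*_0$ has well-behaved bracketing entropy, and since $\widehat\eta^*_n\to\eta^*_0$ conditionally in probability this increment of the bootstrapped empirical process vanishes uniformly in $\theta$, conditionally in probability.

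The drift is handled by a Taylor expansion of $\eta\mapsto P^*_n f_{\theta,\eta}$ around $\eta^*_0$, using $P^*_n f_{\theta,\eta^*_0}=0$:
\[ \sqrt n\,P^*_n f_{\theta,\widehat\eta^*_n} = \bigl\langle \nabla_\eta(P^*_n f_{\theta,\eta})\bigr|_{\eta=\eta^*_0},\, \sqrt n(\widehat\eta^*_n-\eta^*_0)\bigr\rangle + r_n(\theta), \]
where $\sup_\theta|r_n(\theta)|\to 0$ conditionally in probability because $\widehat\eta^*_n-\eta^*_0=O(n^{-1/2})$ conditionally and the Hessian is uniformly bounded under the assumption $\sup_v|v|g(v)<\infty$ of Corollary \ref{Weakconvresc} (which transfers to the bootstrap via convergence of the empirical distribution of $\widehat R_i/\sigma_n$ to the law of $\|Y\|$). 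A bootstrap CLT, i.e.\ the ordinary Lindeberg--Feller theorem applied to the conditionally i.i.d.\ $\widetilde X^*_i$ and to their quadratic forms, then gives
\[ \sqrt n\,(\widehat\eta^*_n-\eta^*_0)\ \Rightarrow\ (\mathbb S,\mathbb D) \]
conditionally for almost every sample, with $(\mathbb S,\mathbb D)$ distributed as in Lemma \ref{ConvSigma}. Substituting and evaluating the gradient exactly as in the proof of Corollary \ref{Weakconvresc} reproduces the drift $\mathbb L(\theta)$ of \eqref{Drift}.

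Joint weak convergence of the empirical process piece and the linearized drift piece—and hence of their sum—would be obtained by a single application of the bootstrap Donsker theorem to the enlarged class containing the $f_{\theta,\eta^*_0}$ together with the coordinate and quadratic functions that generate $\bar X^*_n$ and $\widehat\Sigma^*_n$. Summing the three pieces of the decomposition delivers the claimed limit $\mathbb G f_{\theta,\eta_0}+\mathbb L(\theta)$ in $\ell^\infty(\Theta)$. The main obstacle I anticipate is proving the bootstrap Donsker convergence for the \emph{slowly drifting} class $\{f_{\theta,\eta^*_0}\}$: this requires revisiting the stochastic equicontinuity estimate behind Theorem \ref{CondEmpProc} and making it uniform over a shrinking neighbourhood of $(I_d,0)$, paralleling the additional bracketing work that Theorem \ref{Th: empestim} demanded in the unconditional setting.
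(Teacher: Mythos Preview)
Your overall architecture mirrors the paper's: decompose $\sqrt n\,\widehat{\mathbb P}^*_n f_\theta$ into an empirical-process piece and a drift piece, prove a bootstrap Donsker result for the former, and use a bootstrap CLT for $(\widehat\Sigma^{*\,-1/2}_n,\bar X^*_n)$ to handle the latter. The paper's Step~1 (conditional weak convergence of $n^{-1/2}\sum_i\delta_{\widehat R^*_i W_i}$ to $\mathbb G$ on $\mathcal F$) and Step~2 (showing $\sqrt n(\widehat\Sigma^*_n-I_d)\Rightarrow\mathbb K_0$) correspond to your first and last ingredients. The ``drifting class'' issue you flag as the main obstacle is in fact minor, since $\sigma_n\to1$ almost surely and the paper simply works with $\mathcal F$ directly on the unstandardized bootstrap draws.

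The genuine gap is in your drift identification. You claim that the bootstrap CLT gives $\sqrt n(\widehat\eta^*_n-\eta^*_0)\Rightarrow(\mathbb S,\mathbb D)$ with $(\mathbb S,\mathbb D)$ distributed as in Lemma~\ref{ConvSigma}, and that substituting into the gradient formula then reproduces $\mathbb L(\theta)$. But the $\widetilde X^*_i$ have conditional covariance $\sigma_n^2 I_d\to I_d$, not $\Sigma_0$; the bootstrap CLT therefore produces limits $(\mathbb S_0,\mathbb D_0)$ corresponding to the \emph{identity} covariance (e.g.\ $\mathbb D_0\sim\mathcal N(0,I_d)$ while $\mathbb D\sim\mathcal N(0,\Sigma_0)$). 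Running the gradient computation of Corollary~\ref{Weakconvresc} in this identity-covariance world yields
\[
-(w^\top\mathbb D_0)\,\PP(V>c)\;+\;2\,w^\top\mathbb S_0\,u\;\Ex[V\mathds 1_{V>c}],
\]
which is not manifestly equal to $\mathbb L(\theta)=-(w^\top\Sigma_0^{-1/2}\mathbb D)\,\PP(V>c)+w^\top(\mathbb S\Sigma_0^{1/2}+\Sigma_0^{1/2}\mathbb S)u\,\Ex[V\mathds 1_{V>c}]$. What is missing is precisely the paper's Step~3: the distributional identity
\[
2\,\mathbb S_0\;\stackrel{d}{=}\;\mathbb S\,\Sigma_0^{1/2}+\Sigma_0^{1/2}\,\mathbb S
\]
(jointly with $\Sigma_0^{-1/2}\mathbb D\stackrel{d}{=}\mathbb D_0$, which is immediate). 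The paper proves it by noting that $\widehat\Sigma^*_n$ is asymptotically equivalent, conditionally, to $\Sigma_0^{-1/2}\widehat\Sigma_n\Sigma_0^{-1/2}$, and then manipulating $(I_d+(\widehat\Sigma^*_n)^{1/2})\sqrt n\bigl((\widehat\Sigma^*_n)^{-1/2}-I_d\bigr)=\sqrt n\bigl((\widehat\Sigma^*_n)^{-1}-I_d\bigr)(\widehat\Sigma^*_n)^{1/2}$ to link $\mathbb S_0$ back to $\mathbb S$. Without this step your argument produces the correct bootstrap limit process but does not establish that it coincides with the target $\mathbb G f_{\theta,\eta_0}+\mathbb L(\theta)$.
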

We present the main steps of the proof in the Appendix.
Together with a discretization as in Section \ref{sec:discretealg} this results in  Algorithm \ref{algo2}.  

\begin{algorithm}[h!]
	\KwData{$X_1, \ldots, X_n$, $N_u$, $N_c$, $c_0$ and $B$}
	
	\medskip
	\KwResult{$\widetilde{T}^{1*}_{n, N_u, N_c, c_0}, \ldots, \widetilde{T}^{B*}_{n, N_u, N_c, c_0}$}
	
	\medskip
	
	compute the empirical mean $\bar{X}_n$ and empirical covariance matrix $\widehat \Sigma_n$;
	
	\medskip
	
	compute the standardized variables  $\widehat{X}_1, \ldots, \widehat{X}_n$: $\widehat X_i =  \widehat \Sigma^{-1/2}_n  (X_i - \bar{X}_i)$;
	
	$b =0$ \;
	\While{$b  < B$ }{
		
		\medskip
		$b \gets   b +1$ \;
		
		\medskip
		draw $W^b_1, \ldots, W^b_n$  i.i.d. $\sim \mathcal{U}(S_{d-1})$; 
		
		\medskip
		sample with replacement $\widehat{R}^{b*}_1, \ldots, \widehat{R}^{b*}_n$ from $\{ \Vert \widehat{X}_1 \Vert, \ldots, \Vert \widehat{X}_n \Vert \}$
		independently of $(W^b_1, \ldots, W^b_n)$;
		
		\medskip
		define $\widetilde{X}^{b*}_1:=\widetilde{R}^{b*}_1  \  W_1, \ldots,  \widetilde{X}^{b*}_n:=\widetilde{R}^{b*}_n  \  W_n$;

		\medskip
		
		standardize $\widetilde{X}^{b*}_1, \ldots, \widetilde{X}^{b*}_n$ resulting in $\widehat{X}^{b*}_1, \ldots, \widehat{X}^{b*}_n$;
		
		\medskip
		draw $U^b_1, \ldots, U^b_{N_u}, V^b_1, \ldots, V^b_{N_u} $ i.i.d. $\sim \mathcal{U}(\mathcal{S}_{d-1})$;
		
		\medskip
		compute $\widetilde{T}^{b*}_{n, N_u, N_c, c_0}$ in the same way as described in Algorithm \ref{algo1};

		\medskip
		store    $\widetilde{T}^{b*}_{n, N_u, N_c, c_0}$;
	}
	compute the empirical $(1-\alpha)$-quantile $q^B_{\alpha, n, N_u, N_c, c_0}$ based on the sample $(\widetilde{T}^{1*}_{n, N_u, N_c, c_0}, \ldots, \widetilde{T}^{B*}_{n, N_u, N_c, c_0})$.
	
	\medskip
	\caption{Bootstrap procedure for producing $\widetilde{T}^{b*}_{n, N_u, N_c, c_0}, b=1, \ldots, B$ and the empirical quantile $q^B_{\alpha, n, N_u, N_c, c_0}$ for the extended test to elliptical symmetry. }
	\label{algo2}
\end{algorithm}

\section{Simulations}\label{sec: simss}

In this section we  present the results of an extensive simulation study. We focus on testing for spherical symmetry, and in Section \ref{sec:typeI} investigate the type I error for the bootstrap test proposed in Section \ref{sec:discretealg}, while in Section \ref{sec:typeII} we investigate its power under various alternatives, and compare it with the procedures of \cite{Baringhaus91} and \cite{Liang2008}. 
In Section \ref{sec:compareHenze}, we consider the alternatives presented in \cite{Henze2014} for a power comparison of their procedure with our test. 
Finally, in Section \ref{sec:ellipsymm} we briefly consider the finite sample performance of our test for elliptical symmetry, and compare it with six methods which were investigated in \cite{sakhanenko2008testing}. 
%

All simulations were performed for dimensions $d \in \{3, 6, 10\}$ and with the choice of $N_u=1000$, $N_c=500$ and $c_0=10$. For estimating the unknown critical point of our statistic, $B=100$ bootstrap replications were used. 
For the nominal level we always chose $\alpha = 0.05$.

\subsection{Type I Error}\label{sec:typeI}
Table \ref{tab:level} displays the empirical levels of the test for spherical symmetry in Section \ref{sec:discretealg} obtained with five spherically symmetric distributions  and with $1000$ replications. These null distributions are as follows:
\begin{itemize}
	\item\lq\lq  G\rq\rq: Multivariate Gaussian with mean 0 and correlation matrix equal to the identity $I_d$,
	\item \lq\lq Cauchy \rq\rq: Cauchy distribution with location parameter 0 and scale parameter 1,
	\item \lq\lq  MVt, df \rq\rq: Multivariate $ t $-distribution with $df$ degrees of freedom,
	\item \lq\lq  Kotz rq\rq: Kotz type distribution with $N=2$, $r=1$ and $s=0.5$,
	\item \lq\lq  PVII \rq\rq: Pearson type VII distribution with $N=10$, $m=2$.
\end{itemize}
For the definitions of the multivariate $t$-distribution, the Kotz and Pearson type VII distributions we refer to \cite{Fang1990}.  The results show that for all distributions and dimensions we are near the theoretical level of 0.05, except for the Cauchy distribution.  The latter does not have a finite expectation, and hence our method is not applicable in this case.  Note also that neither the dimension nor the sample size seem to strongly influence the type I error.\\

\begin{table}[!h]
	\caption{\normalsize{Finite-sample level of the proposed bootstrap-test for spherical symmetry for nominal level $\alpha = 0.05$. 
			See the text for description of the parameters of the test, and for details on the distributions under which data are generated. 			
	}}
	\label{tab:level}
	\centering
	\begin{tabular}[t]{cc|ccc|}
		\toprule
		\multicolumn{1}{c}{ Distr.} & \multicolumn{1}{c}{$ n$} & \multicolumn{1}{c}{Dim.~$ 3$} & \multicolumn{1}{c}{Dim.~$ 6$} & \multicolumn{1}{c}{Dim.~$ 10$} \\
		\cmidrule(l{2pt}r{2pt}){1-2} \cmidrule(l{2pt}r{2pt}){3-3} \cmidrule(l{2pt}r{2pt}){4-4} \cmidrule(l{2pt}r{2pt}){5-5}
		& 100    & 0.053 & 0.057 & 0.05\\
		\multirow{-2}{*}{\raggedright\arraybackslash {G}} & 200  & 0.063 & 0.056 & 0.067\\
		\cmidrule{1-5}
		& 100    & 0.003 & 0.008 & 0.009\\
		
		\multirow{-2}{*}{\raggedright\arraybackslash Cauchy} & 200    & 0.015 & 0.008 & 0.005\\
		\cmidrule{1-5}
		& 100    & 0.048 & 0.047 & 0.052\\
		
		\multirow{-2}{*}{\raggedright\arraybackslash {MVt, $df=5$}} & 200    & 0.055 & 0.045 & 0.047\\
		\cmidrule{1-5}
		& 100    & 0.065 & 0.062 & 0.051\\
		
		\multirow{-2}{*}{\raggedright\arraybackslash Kotz} & 200    & 0.063 & 0.064 & 0.063\\
		\cmidrule{1-5}
		& 100    & 0.061 & 0.054 & 0.059\\
		
		\multirow{-2}{*}{\raggedright\arraybackslash PVII} & 200    & 0.058 & 0.062 & 0.062\\
		\bottomrule
	\end{tabular}
\end{table}
\subsection{Type II Error}\label{sec:typeII}
Next, to study the power properties of our test for spherical symmetry in Section \ref{sec:discretealg}, we consider the following distributions:
\begin{itemize}
	\item \lq\lq G,  $\rho$\rq\rq: Multivariate Gaussian with mean $\mu = 0$ and correlation matrix $\Sigma$, where $\Sigma_{ij}=\rho$ if $i\ne j$ and $\Sigma_{ii}=1$, with $\rho\in\{0.4,0.6\}$,
	\item \lq\lq MG,  $\mu$\rq\rq: Mixture of Gaussian distributions with mean $\mu$ and $-\mu$ and correlation matrix $I_d$. An observation from this distribution has probability 0.5 to be sampled from  $\mathcal{N}(\mu,I_d)$, otherwise, it is sampled from  $\mathcal{N}(-\mu,I_d)$. The $\mu$ chosen are $(\mu_1,0,\ldots,0)$, with $\mu_1\in\{1,1.5,2\}$,
	\item \lq\lq NCG,  $\mu$\rq\rq: Multivariate Gaussian $\mathcal{N}(\mu,I_d)$ with mean $\mu =(\mu_1,0,\ldots,0)$, $\mu_1 \in \{ 1,2 \}$. \textit{NC} in the abbreviation stands for \lq\lq not centered \rq\rq,
	\item \lq\lq MTt\rq\rq: Meta-Type normal distribution obtained from a multivariate t-distribution with 5 degrees of freedom. Definitions and theory on Meta-Type distributions can be found in \cite{Fang2002} or \cite{Liang2008},
	\item \lq\lq Cube \rq\rq: Uniform distribution on the Hypercube $[-1,1]^d$.
\end{itemize}

{\small
	\begin{table}[h]
		\caption{ \normalsize{Finite-sample power of the proposed bootstrap-test $T_n$ for spherical symmetry, together with results for two competing procedures $A_n$ and $B_n$, see text for details. }}
		\label{tab:power}
		\centering
		\begin{tabular}[t]{cc|ccc|ccc|ccc|}
			\toprule
			\multicolumn{1}{c}{ } & \multicolumn{1}{c}{ } & \multicolumn{3}{c}{Dim.~$ 3$} & \multicolumn{3}{c}{Dim.~$ 6$} & \multicolumn{3}{c}{Dim.~$ 10$} \\
			\cmidrule(l{2pt}r{2pt}){3-5} \cmidrule(l{2pt}r{2pt}){6-8} \cmidrule(l{2pt}r{2pt}){9-11}
			Distr. & $n$ & $T_n$ & $A_n$ & $B_n$ & $T_n$ & $A_n$ & $B_n$ & $T_n$ & $A_n$ & $B_n$\\
			\midrule
			& 100 & 0.772 & 0.142 & 0.504 & 0.986 & 0.328 & 0.941 & 0.998 & 0.484 & 0.998\\
			
			\multirow{-2}{*}{\raggedright\arraybackslash {G,\ }{$ \rho=0.4$}} & 200 & 0.991 & 0.235 & 0.928 & 1 & 0.541 & 1 & 1 & 0.732 & 1\\
			\cmidrule{1-11}
			& 100 & 1 & 0.615 & 0.969 & 1 & 0.928 & 1 & 1 & 0.995 & 1\\
			
			\multirow{-2}{*}{\raggedright\arraybackslash {G,\ }{$\rho=0.6$}} & 200 & 1 & 0.854 & 1 & 1 & 0.996 & 1 & 1 & 1 & 1\\
			\cmidrule{1-11}
			& 100 & 0.34 & 0.063 & 0.264 & 0.191 & 0.063 & 0.188 & 0.096 & 0.063 & 0.142\\
			
			\multirow{-2}{*}{\raggedright\arraybackslash {MG, \ }{$\mu_1=1$}} & 200 & 0.761 & 0.071 & 0.596 & 0.481 & 0.06 & 0.414 & 0.191 & 0.058 & 0.234\\
			\cmidrule{1-11}
			& 100 & 0.976 & 0.315 & 0.95 & 0.883 & 0.137 & 0.847 & 0.574 & 0.102 & 0.644\\
			
			\multirow{-2}{*}{\raggedright\arraybackslash {MG, \ }{$\mu_1=1.5$}} & 200 & 1 & 0.528 & 1 & 1 & 0.165 & 1 & 0.975 & 0.124 & 0.981\\
			\cmidrule{1-11}
			& 100 & 1 & 0.067 & 1 & 1 & 0.056 & 1 & 1 & 0.063 & 1\\
			
			\multirow{-2}{*}{\raggedright\arraybackslash {NCG, \ }{$\mu_1=1$}} & 200 & 1 & 0.06 & 1 & 1 & 0.053 & 1 & 1 & 0.047 & 1\\
			\cmidrule{1-11}
			& 100 & 1 & 0.913 & 1 & 1 & 0.566 & 1 & 1 & 0.369 & 1\\
			
			\multirow{-2}{*}{\raggedright\arraybackslash {NCG, \ }{$\mu_1=2$}} & 200 & 1 & 0.998 & 1 & 1 & 0.833 & 1 & 1 & 0.594 & 1\\
			\cmidrule{1-11}
			& 100 & 0.053 & 0.21 & 0.056 & 0.07 & 0.413 & 0.071 & 0.051 & 0.651 & 0.05\\
			
			\multirow{-2}{*}{\raggedright\arraybackslash {MTt \ }{$df=5$}} & 200 & 0.068 & 0.349 & 0.062 & 0.042 & 0.727 & 0.062 & 0.053 & 0.901 & 0.062\\
			\cmidrule{1-11}
			& 100 & 0.086 & 0.818 & 0.07 & 0.067 & 0.998 & 0.062 & 0.057 & 1 & 0.07\\
			
			\multirow{-2}{*}{\raggedright\arraybackslash Cube} & 200 & 0.102 & 0.985 & 0.074 & 0.063 & 1 & 0.066 & 0.05 & 1 & 0.072\\
			\bottomrule
		\end{tabular}
	\end{table}
}

The results can be found in Table \ref{tab:power} under the columns \lq\lq $T_n$\rq\rq. The outcomes from two other spherical symmetry tests, labeled with $A_n$ and $B_n$, are also integrated into the table. The first one, $A_n$, denotes the test introduced in \cite{Liang2008}, 
while the second test, $B_n$, represents the test proposed by \cite{Baringhaus91}.  Our test has the best performance in dimension $d=3$ for the first seven distributions.   Under the mixture of Gaussian distribution, MG, the test looses power as the dimension increases. This is mainly caused by the decreasing importance of the deviation.   For the Meta-type normal and cubic distributions neither our test nor the test by \cite{Baringhaus91} exhibit substantial power above the level. However, additional simulations (not displayed) show that the power tends to $1$ with increasing sample size.

\subsection{Some comparison based on \cite{Henze2014}}\label{sec:compareHenze}
Next, we investigate power properties of the tests for spherical symmetry under the following alternatives from the simulation study in \cite{Henze2014},
\begin{itemize} 
	\item \lq\lq$H_1^{(2)}$\rq\rq: the distribution of the random vector $X=(X_1,X_2,X_3)$ such that $X_1$, $X_2$ and $X_3$ are independent,  $X_1$ and $X_2$ are standard Gaussian, while $X_3$ has an Exponential distribution with rate 1.
	
	\item \lq\lq$H_1^{(4)}$\rq\rq: the distribution of the random vector $X=(X_1,X_2,X_3)$ such that $X_1$ and $X_2$ are generated from a uniform distribution on an equilateral triangle centered at the origin, where the length of each side is set to $\sqrt{12}$, while  $X_3$ is independent of $X_1$ and $X_2$ and has a uniform distribution on the interval $(-\sqrt{12},\sqrt{12})$.
\end{itemize}
Table \ref{tab:henze} gives the power properties of the tests for these alternatives for sample sizes $n \in \{100, 200 \}$. In the column \lq\lq Henze et al \rq\rq, we display the largest power from the family of tests (various tuning parameters, Kolmogorov-Smirnov and Cramer-von-Mises statistic) as obtained by \cite{Henze2014} in their simulation study. 
%
%

For $n=100$, the necessary test of \cite{Liang2008} based on the statistic $A_n$ has the lowest power. For $n=200$, the four tests have comparable performances with slight advantage for the test based on $T_n$ in the case of  $H^{(4)}_1$.

\begin{table}[!h]
	\begin{center}
		\caption{\normalsize Power of the tests based on $T_n$, $A_n$, $ B_n$ and the (best) test of \cite{Henze2014} for additional alternative hypotheses. 
		}
		\label{tab:henze}
		\begin{tabular}[t]{cc|c c c c|}
			\hline
			& & & & &  \\[-2ex]
			Distr. & n & $T_n$ &  $A_n$ & $B_n$    &  Henze et al. 
			\\ 
			\hline
			& $100$ & 1  & 0.177  &  1  &   0.73 \\ [-0ex]
			\multirow{-2}{*}{\raggedright\arraybackslash {$H_1^{(2)}$ }}  & $200$ &  1 & 0.304 & 1   & 1   \\
			\hline
			& $100$ & 1 & 0.993  & 1 & 0.72  \\ [-0ex]
			\multirow{-2}{*}{\raggedright\arraybackslash {$H_1^{(4)}$ }}  & $200$  & 1 & 1 & 1  & 0.99   \\
			\hline
		\end{tabular}
	\end{center}
\end{table}

\subsection{Simulations for testing elliptical symmetry}\label{sec:ellipsymm}

In this section we conduct a small simulation study for the bootstrap test for elliptical symmetry as suggested in Algorithm \ref{algo2} in Section \ref{sec:bootelli}. 
To investigate the level, we simulated from the distributions \lq\lq G, $\rho$\rq\rq \ with $\rho \in \{0, 0.4, 0.6\}$ and \lq\lq Kotz\rq\rq \   under the null hypothesis.
%
The results for sample sizes $n \in \{100, 200\}$ under the null hypothesis are shown in Table \ref{tab:ellsym}.
%
%
\begin{table}[!h]
	\centering
	\caption{\normalsize Rejection rates for testing elliptical symmetry under the null hypothesis for nominal level of $\alpha = 0.05$.}
	\label{tab:ellsym}
	
	\begin{tabular}[t]{cc|ccc|}
		\toprule
		Distr. & $n$ & Dim.~$ 3$ & Dim.~$ 6$ & Dim.$ 10$\\
		\midrule
		
		& 100 & 0.061 & 0.058 & 0.068\\
		
		\multirow{-2}{*}{\raggedright\arraybackslash G, $\rho=0$} & 200 & 0.066 & 0.069 & 0.063\\
		\cmidrule{1-5}
		& 100 & 0.065 & 0.057 & 0.069\\
		
		\multirow{-2}{*}{\raggedright\arraybackslash G, $\rho=0.4$} & 200 & 0.051 & 0.054 & 0.064\\
		\cmidrule{1-5}
		& 100 & 0.065 & 0.052 & 0.071\\
		
		\multirow{-2}{*}{\raggedright\arraybackslash G, $\rho=0.6$} & 200 & 0.067 & 0.053 & 0.082\\
		\cmidrule{1-5}
		& 100 & 0.068 & 0.069 & 0.056\\
		
		\multirow{-2}{*}{\raggedright\arraybackslash Kotz} & 200 & 0.068 & 0.07 & 0.063\\

		\bottomrule
	\end{tabular}
\end{table}

%
%
%
%
%
%
%
%
%

Given the moderate sample sizes, the actual levels are reasonably close to the nominal one of $\alpha = 0.05$. 
%
%
To investigate the power, we shall compare our test to those studied in \cite{sakhanenko2008testing}. There, three tests from the class of tests of \cite{koltchi2000}, which are denoted by $S_n$, $C_n$ and $H_n$, are compared with the tests proposed by \cite{beran1979testing, huffer2007test} and \cite{manzotti2002statistic}, denoted $B_n^0$, $Q_n$ and $HP_n$. 
As in \cite{sakhanenko2008testing} we consider the following alternative distributions in $2$ and $3$ dimensions: 
\begin{itemize}
	\item $M_2$: a mixture of two bivariate Gaussian distributions: the first one a standard normal and the second with mean $\mu_2=(1,2)$ and covariance matrix $\Sigma_2=(5,-4;-4,5)$, with mixing probabilities equal to $1/2$,
	\item $M_3$: a mixture of two three-dimensional Gaussian distributions: the first again a standard normal, the other with mean $\mu=(1,2,3)$  and covariance $\Sigma=(5,-4,1;-4,6,-4;1,-4,5)$, with mixing probabilities equal to $1/2$,
	\item $\Gamma + N_{d-1}$: a $d$-dimensional random vector, with the first component a $\Gamma(2,3)$ distribution and the other $d-1$ coordinates independent of the first one and distributed according to a $(d-1)$ - dimensional standard normal distribution,
	\item $B(\beta)$: a multivariate Burr distribution, which has the distribution of $(1+X/Y)^{-\beta}$, where $Y$ is a univariate gamma distribution with shape parameter 1 and scale parameter $\beta$, that we'll set equal to 0.5 in both dimension 2 and 3, while the coordinates of $X$ are all mutually independent and identically gamma-distributed with shape parameter $2$ and scaling parameter $3$,
	\item $U(C)$: the uniform distribution on the unit cube in dimensions 2 and 3  (this corresponds to \lq\lq Cube\rq\rq \ used in Section 4.2),
	\item $U(A)$: the uniform distribution on the set $A=[0,1) \times [0,\pi/2)\cup[0,1) \times [\pi,3\pi/2)\cup[1,\sqrt{2})) \times [\pi/2,\pi)\cup[1,\sqrt{2} )\times [3\pi/2,2\pi)$ in dimension 2. 
\end{itemize}
We run our test with the parameters $N_u = 1000$, $N_c = 500$ and $c_0 = 10$ as above, but the number of replications is set to be $5000$ to match the setting in \cite{sakhanenko2008testing}.

The results obtained for our test $T_n$, together with those from the paper \cite{sakhanenko2008testing} for the tests mentioned above, are given in Table \ref{tab:power1} for dimension $2$ and in Table \ref{tab:power2} for dimension $3$. The test $T_n$ compares very favorably to the competing procedures in terms of power.  

{\small
	\begin{table}[h]
		\caption{ \normalsize{Rejection rates for testing elliptical symmetry under the selected two-dimensional alternatives for nominal level of $\alpha = 0.05$. The table contains the results from \cite[Table 2]{sakhanenko2008testing}.}}
		\label{tab:power1}
		\centering
		\begin{tabular}[t]{cc|ccccccc|}
			\toprule
			\multicolumn{1}{c}{ } & \multicolumn{1}{c}{ } & \multicolumn{7}{c}{Dim.~$ 2$} \\
			\cmidrule(l{2pt}r{2pt}){3-9}
			Distr. & $n$ & $T_n$ & $S_n$ & $C_n$ & $H_n$ & $B^0_n$ & $Q_n$ & $HP_n$ \\
			\midrule
			& 50 & 0.575 & 0.333 & 0.241 & 0.258 & 0.060 & 0.295 & 0.103 \\
			
			& 100 & 0.922 & 0.728 & 0.490 & 0.510 & 0.101 & 0.609 & 0.261 \\
			
			\multirow{-3}{*}{\raggedright\arraybackslash {$M_2$}} & 200 & 0.999 & 0.984 & 0.880 & 0.869 & 0.478 & 0.936 & 0.623 \\
			\cmidrule{1-9}
			& 50 & 0.316 & 0.095 & 0.447 & 0.494 & 0.061 & 0.077 & 0.111\\
			
			& 100 & 0.591 & 0.172 & 0.809 & 0.860 & 0.115 & 0.117 & 0.285 \\
			
			\multirow{-3}{*}{\raggedright\arraybackslash {$\Gamma + N_1$}} & 200 & 0.928 & 0.375 & 0.991 & 0.996 & 0.350 & 0.203 & 0.692 \\
			\cmidrule{1-9}
			& 50 & 0.204 & 0.091 & 0.058 & 0.068 & 0.051 & 0.116 & 0.029 \\
			
			& 100 & 0.410 & 0.159 & 0.058 & 0.086 & 0.058 & 0.221 & 0.039 \\
			
			\multirow{-3}{*}{\raggedright\arraybackslash {U(C)}}& 200 & 0.756 & 0.343 & 0.059 & 0.166 & 0.098 & 0.454 & 0.042 \\
			\cmidrule{1-9}
			& 50 & 0.521 & 0.487 & 0.250 & 0.338 & 0.081 & 0.419 & 0.152 \\
			
			& 100 & 0.910 & 0.839 & 0.508 & 0.699 & 0.358 & 0.746 & 0.363 \\
			
			\multirow{-3}{*}{\raggedright\arraybackslash {B(0.5) }} & 200 & 1 & 0.991 & 0.906 & 0.979 & 0.930 & 0.975 & 0.737 \\
			\cmidrule{1-9}
			& 50 & 0.977 & 0.199 & 0.222 & 0.194 & 0.423 & 0.074 & 0.536\\
			
			& 100 & 1 & 0.547 & 0.399 & 0.413 & 0.950 & 0.069 & 0.824 \\
			
			\multirow{-3}{*}{\raggedright\arraybackslash {U(A)}} & 200 & 1 & 0.961 & 0.726 & 0.843 & 1 & 0.078 & 0.998 \\
			\bottomrule
		\end{tabular}
	\end{table}

	{\small
		\begin{table}[h]
			\caption{ \normalsize{Rejection rates for testing elliptical symmetry under the selected three-dimensional alternatives for nominal level of $\alpha = 0.05$. The table contains the results from \cite[Table 3]{sakhanenko2008testing}.}}
			\label{tab:power2}
			\centering
			\begin{tabular}[t]{cc|cccccc|}
				\toprule
				\multicolumn{1}{c}{ } & \multicolumn{1}{c}{ } & \multicolumn{6}{c}{Dim.~$ 3$} \\
				\cmidrule(l{2pt}r{2pt}){3-8}
				Distr. & $n$ & $T_n$ & $S_n$ & $B^1_n$ & $B^2_n$ & $Q_n$ & $HP_n$ \\
				\midrule
				
				& 100 & 1 & 0.997 & 0.058 & 0.076 & 0.954 & 0.995  \\
				
				\multirow{-2}{*}{\raggedright\arraybackslash {$M_3$}} & 200 & 1 & 1 & 0.052 & 0.079 & 1 & 1  \\
				\cmidrule{1-8}
				
				& 100 & 0.551 & 0.198 & 0.056 & 0.064 & 0.135 & 0.188  \\
				
				\multirow{-2}{*}{\raggedright\arraybackslash {$\Gamma + N_1$}} & 200 & 0.932 & 0.414 & 0.054 & 0.078 & 0.260 & 0.444  \\
				\cmidrule{1-8}
				
				& 100 & 0.399 & 0.317 & 0.052 & 0.059 & 0.179 & 0.033  \\
				
				\multirow{-2}{*}{\raggedright\arraybackslash {U(C)}}& 200 & 0.797 & 0.711 & 0.058 & 0.068 & 0.375 & 0.036 \\
				\cmidrule{1-8}
				
				& 100 & 0.938  & 0.972 & 0.096 & 0.145 & 0.855 & 0.686  \\
				
				\multirow{-2}{*}{\raggedright\arraybackslash {B(0.5) }} & 200 & 1 & 1 & 0.116 & 0.228 & 0.994 & 0.969  \\
				
				\bottomrule
			\end{tabular}
		\end{table}

		\newpage


\section*{Acknowledgments.}  \ The second author would like to thank Jon A. Wellner for pointing out very relevant references on testing spherical symmetry and for providing us with the manuscript by \cite{vdvwell2007}.   
All authors thank very much two anonymous referees for their very constructive comments, which led to a much improved manuscript.
\section{Appendix: Proofs } \label{empprocess}

\subsection{Proofs for Section \ref{sec: KStests}}

\begin{proof}[Proof of Lemma \ref{Theo.generate}]
	First we show that a random vector  $X$ in  $\RR^d$ with zero expectation is spherically symmetric if and only if we have that
	\begin{equation}\label{eq:eaton1}
	\Ex[(v  - (v^\top u) u)^\top X |  u^\top X ]  = 0 \qquad \forall \ u,v \in \mathcal{S}_{d-1}.
	\end{equation}
	Indeed, \eqref{eq:eaton1} immediately implies \eqref{eq:eatonchara}, and the converse is equally clear since $v  - (v^\top u) u  $ and $u$ are perpendicular. 
	Now Lemma \ref{Theo.generate} follows from the characterization \eqref{eq:eaton1} and the following lemma. 
\end{proof}
\begin{lemma}\label{LemInterm}
	Let $Y,Z$ be random variables with $E|Y|< \infty$. Then $\Ex[Y \mathds{1}_{Z \geq c}] = 0$ for all $c \in \RR$ if and only if $\Ex[Y|Z]=0$.  
\end{lemma}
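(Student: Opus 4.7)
The plan is to prove both directions of the equivalence, treating the nontrivial direction (sufficiency) via a standard measure-theoretic extension argument.

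For the easy direction, I would assume $\Ex[Y \mid Z] = 0$ almost surely. Then by the tower property, for any $c \in \RR$,
\[
\Ex[Y \mathds{1}_{Z \ge c}] = \Ex\big[\Ex[Y \mid Z] \mathds{1}_{Z \ge c}\big] = 0,
\]
since $\mathds{1}_{Z\ge c}$ is $\sigma(Z)$-measurable.

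For the converse, suppose $\Ex[Y \mathds{1}_{Z \ge c}] = 0$ for all $c \in \RR$. The plan is to define the finite signed measure
\[
\nu(B) := \Ex[Y \mathds{1}_{Z \in B}], \qquad B \in \mathcal{B}(\RR),
\]
which is well-defined since $|\nu(B)| \le \Ex|Y| < \infty$. The hypothesis says $\nu([c,\infty)) = 0$ for all $c$, and hence by taking differences $\nu([a,b)) = \nu([a,\infty)) - \nu([b,\infty)) = 0$ for all $a<b$. Since the collection of half-open intervals is a $\pi$-system generating $\mathcal{B}(\RR)$, and since the class $\{B : \nu(B) = 0\}$ is a $\lambda$-system (using that $\nu$ is a finite signed measure, so it is closed under countable disjoint unions and proper differences), Dynkin's $\pi$--$\lambda$ theorem yields $\nu \equiv 0$ on $\mathcal{B}(\RR)$. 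Equivalently, $\Ex[Y \mathds{1}_{Z \in B}] = 0$ for every Borel $B$.

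To conclude $\Ex[Y \mid Z] = 0$ a.s., I would invoke the defining property of conditional expectation: $\Ex[Y \mid Z]$ is the a.s.-unique $\sigma(Z)$-measurable, integrable random variable satisfying $\Ex[Y \mathds{1}_A] = \Ex[\Ex[Y\mid Z] \mathds{1}_A]$ for all $A \in \sigma(Z)$. Since every $A \in \sigma(Z)$ has the form $\{Z \in B\}$ for some Borel $B$, the preceding paragraph shows the constant function $0$ satisfies this identity, so by uniqueness $\Ex[Y \mid Z] = 0$ almost surely.

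The only mild obstacle is the passage from half-lines to all Borel sets; this is routine via the $\pi$--$\lambda$ theorem applied to a finite signed measure (one could equivalently use the Jordan decomposition and apply the uniqueness of measures on a $\pi$-system). No other step requires care.
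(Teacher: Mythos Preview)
Your proof is correct and follows essentially the same route as the paper: the tower property for the trivial direction, and for the converse a $\pi$--$\lambda$ (intersection-stable generator) argument to pass from half-lines to all Borel sets, followed by the defining property of conditional expectation. One small point: for $\{B:\nu(B)=0\}$ to be a $\lambda$-system you need $\nu(\RR)=\Ex[Y]=0$, which you did not check explicitly; the paper handles this by letting $c\to-\infty$ and using dominated convergence (your alternative via the Jordan decomposition and uniqueness on a $\pi$-system with an exhausting sequence $[c_n,\infty)\uparrow\RR$ would also cover it).
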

\begin{proof}[Proof of Lemma \ref{LemInterm}]
	If $\Ex[Y|Z]=0$ then $\Ex[Y \mathds{1}_{Z \geq c}] = \Ex[\mathds{1}_{Z \geq c}\Ex[Y | Z]] = 0$ for every $c$. Conversely, $\Ex[Y|Z]=0$ if $\Ex[Y\, \mathds{1}_{Z \in B}]=0$ for all Borel-subsets $B$ of $\RR$. But here it suffices to consider sets $B$ in an intersection-stable generator (which contains $\Omega$ resp.~$\RR$).
	Now, the collection of sets $\{ [c,\infty), \ c \in \RR \}$ is an intersection-stable generator of the Borel-$\sigma$-field of  $\RR$, and hence the events $\{ \{ Z \in [c,\infty)\}, \ c \in \RR \} = \{ \{Z \geq c\}, \ c \in \RR\}$ are an intersection-stable generator of the $\sigma$-field generated by $Z$.
	By letting $c \to - \infty$ und using dominated convergence, from the assumption we obtain that $\Ex[Y] = \Ex[Y\, \mathds{1}_\Omega]=0$, so we can add the full space $\Omega$ to this system of sets.  
\end{proof}

\bigskip

\begin{proof}[Proof of Theorem \ref{Weakconv}] 
	We shall show that $\mathcal{F}$  in (\ref{classF}) is a VC-class of functions with an integrable envelope, see \cite[Theorem 2.5.2 and Section 2.6]{vdvwell}.
	We have that 
	\begin{align}\label{ClassG}
	\mathcal{F} \subset \mathcal{G}& : =  \Big \{g(x) =w^\top x \  \mathds{1}_{u^\top x \ge c} \mid  \notag \\
	& \hspace{2cm}   \ (u, w) \in \RR^d \times \RR^d,   c \in \RR    \Big \},
	\end{align}
	and it suffices to show the assertions for $\mathcal{G}$. 
	First, the function $F(x)  = 2 \Vert x \Vert $ is an envelope for  $\mathcal{G}$ which is square-integrable by assumption. 
	We show now that $\mathcal{G}$ is a VC-class; i.e., we need to show that the collection of all subgraphs 
	\begin{equation*}
	\mathcal{C} = \Big\{  \big \{(x, t)  \in \RR^{d+1} \mid g(x)  > t  \big \} \mid g \in \mathcal{G} \Big\}
	\end{equation*}
	forms a VC-class  in $\mathbb{R}^{d+1}$; see  Section 2.6 in \cite{vdvwell}.  We have that 
	\begin{eqnarray*}
		\mathcal{C}&=  &   \Big \{ \{ (x, t) \in \mathbb{R}^{d+1} \mid   w^\top x  > t, u^\top x \ge c\} \mid  c \in \RR, \ u, w \in \RR^d  \Big \}  \\
		& & \ \  \cup  \  \Big \{\{ (x, t) \in \mathbb{R}^{d+1} \mid   t  < 0, u^\top x  <  c\} \mid  c \in \RR,\ u \in \RR^d \Big \} \\
		& = &  \mathcal{C}_1  \cup \mathcal{C}_2.
	\end{eqnarray*}
	From \cite[Lemma 2.6.27]{vdvwell} it suffices to show that $\mathcal{C}_i$ are VC-classes of sets, which follows from \cite[Section II.4, Lemma 18]{pollard2012convergence}.
\end{proof}

\begin{proof}[Proof of Proposition \ref{cons}]
	From Theorem \ref{Weakconv} it follows that  as $n \to \infty$, 
	\begin{eqnarray*}
		\Vert \mathbb{G}_n \Vert_{\mathcal{F}} =\Vert \sqrt n (\PP_n - P) f\Vert_{\mathcal{F}}  \to_d \Vert \mathbb G \Vert_\mathcal{F}.
	\end{eqnarray*}
	On the other hand, we have that
	\begin{eqnarray*}
		T_n = \sqrt n \Vert \PP_n f \Vert_{\mathcal{F}}  = \sqrt n \Vert( \PP_n  -  P)f  +  P  f \Vert_{\mathcal{F}}  \ge  \sqrt n \Vert  P f \Vert_{\mathcal{F}} - \sqrt n \Vert (\PP_n  - P)f  \Vert_{\mathcal{F}}.
	\end{eqnarray*}
	Thus,
	\begin{eqnarray*}
		\PP(T_n > q_{1-\alpha}(\mathbb G))  & \ge    &   \PP\Big(\sqrt n \Vert  P  f \Vert_{\mathcal{F}}  -  \sqrt n \Vert (\PP_n  - P)f  \Vert_{\mathcal{F}}   > q_{1-\alpha}(\mathbb G)  \Big)  \\
		& \ge & \PP\Big( \sqrt n \Vert (\PP_n  - P)f  \Vert_{\mathcal{F}}   < -q_{1-\alpha}(\mathbb G)  + \sqrt n\,  \Delta_0(X) \Big)  \to 1
	\end{eqnarray*}
	as $n \to \infty$, where $\Delta_0(X)>0 $ is as in \eqref{D}. 
\end{proof}
\begin{proof}[Proof of Theorem \ref{CondEmpProc}]
	We shall use \cite[Theorem 2.11.1]{vdvwell}. To this end, we shall check that the assumptions of this theorem hold conditionally on the sample $X_1, X_2, \ldots $ almost surely, and that the limit process always is $\mathbb G$. 
	%
	We write $L_0$ for the distribution of $R$, $L^*_n$ for  the empirical distribution $ n^{-1}\sum_{j=1}^n \delta_{R_i} $  and $\mathcal{U} $ for the uniform distribution $ \mathcal{U}(\mathcal{S}_{d-1})$. When taking the expected value conditionally on $X_1, X_2, \ldots$, we write $\Ex_{L^*_n \otimes \mathcal{U}}$, and $\Ex_{\mathcal{U}}$ if the expression only involves the $W_j$ and not the $R^*_j$. 
	We write 
	\[ Z_{ni}  = n^{-1/2}  \delta_{W_i R^*_i}, \qquad i =1, \ldots, n.\]
	Since $W_i R^*_i$ is spherically symmetric under $L^*_n \otimes \mathcal{U}$, we have that \linebreak $\Ex_{L^*_n \otimes \mathcal{U}}[f(W_i R^*_i)]=0$ for $f \in \cF_s$ by assumption, so that 
	\begin{eqnarray*}
		\mathbb{G}^*_n(f)   =   \sum_{i=1}^n \left(Z_{ni}(f)  - \Ex_{L^*_n \otimes \mathcal{U}} Z_{ni}(f)\right)  =  \sum_{i=1}^n Z_{ni}(f), \qquad f \in \cF_s,
	\end{eqnarray*}
	Consider the variance semimetric
	\begin{eqnarray*}
		\rho^2(f, g)   &  =    &    \Ex_{L_0 \otimes \mathcal{U}} \big[\left(f(R  W)  -  g(R  W)  \right)^2\big]  =    \Ex_{P} \big[\left(f(X)  -  g(X  )\right)^2\big]
	\end{eqnarray*}
	for $f,g \in \cF_s$. 
	By the assumptions on the class of functions $\cF_s$, it is $P$-Donsker (for any $P$ for which the majorant is square-integrable), and hence by  \cite[Corollary 2.3.12 ]{vdvwell} it follows that  the space  $(\cF_s, \rho)$ is totally bounded. 
	Given $\eta > 0$ we have that
	\begin{equation*}
	\sum_{i=1}^n \Ex_{L^*_n \otimes \mathcal{U}} \left[  \Vert Z_{ni} \Vert^2_{\mathcal{F}}  \mathds{1}_{\{  \Vert Z_{ni} \Vert_{\mathcal{F} }  > \eta \}} \right] 
	\leq  \Ex_{L^*_n \otimes \mathcal{U} } \left[ F(R^* W)^2  \mathds{1}_{\{ \vert F (R^* W) \vert   > n^{1/2} \eta\}} \right],
	\end{equation*}
	where $F$ is the envelope of $\cF_s$ and $(R^*,W)$ is distributed as $L^*_n \otimes \mathcal{U}$. Expanding the expected value w.r.t.~$L^*_n$ yields
	\begin{equation*}
	\Ex_{L^*_n \otimes \mathcal{U} } \left[ F^2(R^* W)  \mathds{1}_{\{ \vert F (R^* W) \vert   > n^{1/2} \eta\}} \right] = \frac{1}{n}\, \sum_{j=1}^n \Ex_{ \mathcal{U} } \left[ F^2(R_j W)  \mathds{1}_{\{ \vert F (R_j W) \vert   > n^{1/2} \eta\}} \right].
	\end{equation*}
	Since $\Ex [ F^2(R\, W)]< \infty$ by assumption, from the dominated convergence theorem is follows that 
	\[ \Ex\big[ \Ex_{\mathcal{U}}[ F^2(R W) \mathds{1}_{\{ \vert F (R W) \vert   > n^{1/2} \eta\}}]\big]  \to 0, \quad n \to \infty,\]
	and the strong law implies that as $n \to \infty$,
	\begin{align*}
	& \sum_{i=1}^n \Ex_{L^*_n \otimes \mathcal{U}} \left[  \Vert Z_{ni} \Vert^2_{\mathcal{F}}  \mathds{1}_{\{  \Vert Z_{ni} \Vert_{\mathcal{F} }  > \eta} \right] \\
	\leq & \ \frac{1}{n}\, \sum_{j=1}^n \Ex_{ \mathcal{U} } \left[ F(R_j W)^2  \mathds{1}_{\{ \vert F (R_j W) \vert   > n^{1/2} \eta\}} \right] \to 0.
	\end{align*}
	This shows the first assumption in \cite[Theorem 2.11.1]{vdvwell}. 
	
	As for the second, we need to show that for any sequence $\delta_n >0$ with $\delta_n \to 0$ we have that 
	\begin{align}
	& \sup_{ \rho(f, g)  < \delta_n }   \sum_{i=1}^n \Ex_{L^*_n \otimes \mathcal{U}}  \left[ \left( Z_{ni} (f)  - Z_{ni}(g)\right)^2\right]  \nonumber \\
	& =   \sup_{ \rho(f, g)  < \delta_n }  \frac{1}{n} \sum_{i=1}^n  \Ex_{\mathcal{U}} \left[ \left( f(R_i  W)  -g (R_i  W)\right)^2 \right]    \to  0 \label{eq:cond2vdV} 
	\end{align}
	for almost all $X_1, X_2, \ldots $, where $f,g \in \cF_s$ in the supremum. To this end, we show below that the class of functions defined as 
	\begin{equation}\label{eq:GlivCant}
	\mathcal{C}:=  \Big \{r \mapsto \Ex_{\mathcal{U}} [(f- g)^2(W r) ] \mid  r \in (0, \infty), \   \textrm{and} \  f, g \in \mathcal{F}_s \Big \}
	\end{equation}
	is Glivenko-Cantelli for the law $L_0$ of $R$. Then we have in particular that
	\begin{align}\label{eq:GlivCant}
	\sup_{ \rho(f, g)  < \delta_n}  \Big \vert \sum_{i=1}^n  \Ex_{\mathcal{U}} \left[ \left( f(R_i  W)  -g (R_i  W)\right)^2 \right]  -  E_{L_0 \otimes \mathcal{U}} \left[  (f-g)^2 (WR)\right]    \Big \vert \to 0
	\end{align}
	almost surely as $n \to \infty$. Since   
	\[ E_{L_0 \otimes \mathcal{U}} \left[  (f-g)^2 (WR)\right]   =  \rho^2(f, g)\]
	we evidently have 
	%
	%
	\begin{eqnarray*}
		&& \sup_{ \rho(f, g)  < \delta_n}  \Big \vert  E_{L_0 \otimes \mathcal{U}} \left[  (f-g)^2 (WR)\right]  \to 0,
	\end{eqnarray*}
	which together with \eqref{eq:GlivCant} implies \eqref{eq:cond2vdV}. 
	To show that 
	\eqref{eq:GlivCant} is a Glivenko-Cantelli class, we shall apply \cite[Theorem 2.4.3]{vdvwell}. We first note that $\tilde{F}(r) =  4 \, \Ex_{\mathcal{U}} [F^2(W r) ]  $ is an $L_0$-integrable envelope for $\mathcal{C}$. 
	To bound the covering numbers, for functions $f,f_i,g,g_j$, smaller in absolute value than $F$, for given $M>0$ we estimate
	\begin{align*}
	& \Ex_{L_n^* \otimes \mathcal{U}}\Big[\Big \vert  (f - g)^2(WR^*)  \mathds{1}_{\tilde{F}(R^*) \le M } -  (f_i - g_j)^2(WR^*)   \mathds{1}_{\tilde{F}(R^*) \le M }\Big \vert \Big]  \\
	=&   \Ex_{L_n^* \otimes \mathcal{U}}\Big[\Big \vert \left(f(WR^*) -  f_i(WR^*)  -  g(WR^*)  + g_j(WR^*) \right)  \\ 
	& \  \hspace{2cm}    (f(WR^*) + f_i(WR^*) -  g(WR^*)  - g_j(WR^*))\, \Big \vert \mathds{1}_{F(WR^*) \le \sqrt{M}/2 } \Big]  \\
	\le & 2\, \sqrt{M}   \Big( \Ex_{L_n^* \otimes \mathcal{U}}\Big[\Big \vert f(WR^*) -  f_i(WR^*)  \Big \vert  \Big ] +    \Ex_{Q \otimes \mathcal{U}} \Big[\Big \vert    g(WR^*)  - g_j(WR^*)  \Big \vert  \Big ]\Big).
	\end{align*}
	This implies for the covering numbers of the class $\mathcal{C}_M  = \big\{ h \mathds{1}_{\{ \tilde{F} \le M \}} \mid  h \in \mathcal{C}  \big \}$ that for $\epsilon>0$, 
	\[ N\big(4 \sqrt{M} \epsilon, \mathcal{C}_M, L_2(L_n^*)  \big) \leq N^2\big(\epsilon, \mathcal F_s, L_2(L_n^* \otimes U)  \big).\]
	Since by assumption, $\mathcal F_s$ is a VC-class of functions, we obtain the estimate
	\[ N\big(\epsilon, \mathcal{C}_M, L_2(L_n^*)  \big) \leq K\, \Big(\frac{4 \sqrt{M} \| \tilde F\|_{L_n^*,1}}{\epsilon} \Big)^V \] 
	for positive constants $K,V>0$. By integrability of $\tilde F$ under $L_0$ and the law of large numbers, this implies that $\log N\big(\epsilon, \mathcal{C}_M, L_2(L_n^*)  \big) = o_{L_0}(n)$, as required in \cite[Theorem 2.4.3]{vdvwell}.

	To check the third condition  of \cite[Theorem 2.11.1]{vdvwell}, since $\cF_s$ is a VC-class, there are constants $K',V'>0$ for which  
	\begin{eqnarray*}
		N\left(\epsilon   ,  \mathcal{F}, L_2(Q_n)  \right)  \le K'  \left( \frac{\Vert F\Vert_{Q_n,2}}{\epsilon}  \right)^{V'},
	\end{eqnarray*}
	where $Q_n$ is the empirical distribution of $(W_1, R^*_1), \ldots, (W_n , R^*_n)$. The distance in $L_2(Q_n)$ is 
	\[ d^2_n (f, g)  = \sum_{i=1}^n (Z_{ni}(f) - Z_{ni}(g))^2,\]
	as required in \cite[p.~206]{vdvwell}. Hence we estimate the entropy integral as
	%
	%
	\begin{align*}
	& \int_0^{\delta_n}  \sqrt{\log N\left(\epsilon,  \mathcal{F}, d_n  \right) }  \dd\epsilon\\
	\le  & \int_0^{\delta_n}  \sqrt{ \log(K')  + V'\, \log(\Vert F\Vert_{Q_n,2}) + V'\, \log(1/\epsilon)  }  d\epsilon  \\
	\le &  \delta_n\, \sqrt{ \log(K')  + V'\, \log(\Vert F\Vert_{Q_n,2})}\, + V'\, \int_0^{\delta_n}   \sqrt{\log(1/\epsilon)}  \dd\epsilon   \\
	= & \delta_n\, \sqrt{ \log(K')  + V'\, \log(\Vert F\Vert_{Q_n,2})}\,  +  \sqrt{2 \gamma}   \int_{1/\delta_n}^\infty  \frac{\sqrt{\log t}}{t^2} \dd t \\
	\to  &\  0, \  \qquad  \textrm{as $\delta_n \to 0$}.
	\end{align*}
	Finally, for the limiting covariance we have that
	\begin{eqnarray*}
		\cov_{L^*_n \otimes \mathcal{U}} \left(\sum_{i=1}^n Z_{ni}(f),  \sum_{i=1}^n Z_{ni}(g)\right ) & =  &  n \cov_{L^*_n \otimes \mathcal{U}}(Z_{n1}(f), Z_{n1}(g)) \\
		& = &    \Ex_{L^*_n \otimes \mathcal{U}}  \big[f(W R^*) g(W R^*)\big]  \\
		&= &  \frac{1}{n}  \sum_{i=1}^n E_{\mathcal{U}}  \left[f(W R_i) g(W R_i)  \right]    \\
		&  \to &  \Ex_{L_ 0\otimes \mathcal{U}}  \left[ f(WR)  g(WR) \right] 
	\end{eqnarray*}
	almost surely by the strong law of large numbers. The latter equals \linebreak  $\Ex\left[ f(X)  g(X) \right]  = \cov(f(X), g(X))$, the covariance of the limiting process  $\mathbb{G}$. We conclude that $\mathbb{G}^*_n$ also converges weakly to this process, given almost all sequences $X_1, X_2, \ldots $.  
\end{proof}

\begin{proof}[Proof of Theorem \ref{convdiscrete}]
	We shall apply the changing classes central limit theorem,  Theorem 19.28 in \citet{vdv98}, conditionally on almost all sequences  $ U_1 , U_{2}, \ldots$ and $V_1, V_{2}, \ldots$. As the limiting process will be the same almost surely, the convergence then is also unconditional. 
	
	In the proof  of Theorem \ref{Weakconv} we showed that $ \mathcal{F}_{\Theta}$ is a VC-class of functions, and hence the same is true for each 	
	$ \mathcal{F}_{\Theta}^{(n)} = \{ f_{n(\theta)} \mid \theta \in \Theta\}$ with the same bound on the entropy numbers. Therefore, the condition on the entropy integral in \citet[Theorem 19.28]{vdv98} is satisfied. 
	Further, $\|X\|_2$ is a majorant for $ \mathcal{F}_{\Theta}$ and hence also for each function class $ \mathcal{F}_{\Theta}^{(n)}$, which satisfies the Lindeberg-condition by the assumed integrability of $\|X\|_2^2$. Finally, for each $\theta \in \Theta$, by continuity of the distribution of $X$ we have that $f_{n(\theta)}(x) \to f_\theta(x)$ for almost all $x \in \RR^d$, conditionally on almost all sequences $ U_1 , U_{2}, \ldots$ and $V_1, V_{2}, \ldots$. By dominated convergence we obtain convergence of the covariances $P(f_{n(\theta)}\, f_{n(\tilde \theta)}) - P(f_{n(\theta)})\, P(f_{n(\tilde \theta)}) \to P(f_{\theta}\, f_{\tilde \theta}) - P(f_{\theta})\, P(f_{\tilde \theta})$, conditionally on almost all sequences $ U_1 , U_{2}, \ldots$ and $V_1, V_{2}, \ldots$. 
\end{proof}
\begin{proof}[Proof of Corollary \ref{cor:discreteconst}]
	Again we set $ \mathcal{F}_{\Theta}^{(n)} = \{ f_{n(\theta)} \mid \theta \in \Theta\}$. 
	First suppose that $X$ is spherically symmetric. Then by Theorem \ref{convdiscrete}, since $P f = 0$, $f \in  \mathcal{F}_{\Theta}$,
	\[ \widetilde{T}_{n, {N_u, N_c, c_0}} = \| \mathbb G_n\|_{\mathcal{F}_{\Theta}^{(n)}} \Rightarrow \Vert  \mathbb G \Vert_{\mathcal{F}_\Theta},\]
	which implies the first statement by continuity of the distribution of $\Vert  \mathbb G \Vert_{\mathcal{F}_\Theta}$. 
	
	Now suppose that $X$ is not spherically symmetric, so that in \eqref{D}, $\Delta_0(X)>0$. By continuity of the distribution of $X$ we have that 
	\[ \Vert  P  f \Vert_{\mathcal{F}_{\Theta}^{(n)}}  \uparrow \Delta_0(X)\] 
	along almost all sequences $ U_1 , U_{2}, \ldots$ and $V_1, V_{2}, \ldots$. Let $\mathcal{A}_{U,V}$ denote the $\sigma$-algebra generated by $ U_1 , U_{2}, \ldots$ and $V_1, V_{2}, \ldots$. 
	Then arguing as in the proof of Proposition \ref{cons}, 
	\begin{align*}
	& \, \PP\big(\widetilde{T}_{n, {N_u, N_c, c_0}} > q_{1-\alpha}(\mathbb G) \mid \mathcal{A}_{U,V} \big)\\
	\ge    &  \,  \PP\Big(\sqrt n \Vert  P  f \Vert_{\mathcal{F}_{\Theta}^{(n)}}  -  \sqrt n \Vert (\PP_n  - P)f  \Vert_{\mathcal{F}_{\Theta}^{(n)}}   > q_{1-\alpha}(\mathbb G) \mid \mathcal{A}_{U,V} \Big)  \to 1 
	\end{align*}
	almost surely, since from the proof of Theorem \ref{convdiscrete} we also have convergence of $\sqrt n \Vert (\PP_n  - P)f  \Vert_{\mathcal{F}_{\Theta}^{(n)}}$ to $\| \mathbb G\|_{\mathcal{F}_{\Theta}}$ conditionally on $\mathcal{A}_{U,V}$. Taking the expected value of the conditional probability yields the statement of the corollary. 
\end{proof}

\subsection{Proofs for Section \ref{sec:extension}}

\begin{proof}[Proof of Theorem \ref{Th: empestim}]
	As discussed in the beginning of Section \ref{sec:convprop}, we have that 	$\PP(\widehat \eta_n \in \mathcal{E})  = 1$, $n \geq d+1$. 
	We shall use Theorem 2.1 in \cite{vdvwell2007}, and for that we need to show  that 
	\begin{eqnarray}\label{Cond1}
	\sup_{\theta \in \Theta}  P(f_{\theta, \widehat{\eta}_n}  -  f_{\theta,\eta_0} )^2  \to  0
	\end{eqnarray}
	in probability as $n \to \infty$ and that the class 
	\begin{eqnarray}\label{Donskf}
	\Big \{ f_{\theta,\eta} \mid  \  \   (\theta, \eta) \in \Theta  \times \mathcal{E}  \Big \}  \  \  \textrm{is \ $P$-Donsker}.
	\end{eqnarray}
	To show (\ref{Donskf}), we write
	\begin{eqnarray*}
		f_{\theta,\eta}(x)  & =   &   w^\top  A  (x-\mu)  \mathds{1}_{u^\top  A (x-\mu)  \ge c  }  \\
		& = &  (A^\top w)^\top  x  \  \mathds{1}_{(A^Tu)^\top x  \ge c   + u^\top A \mu }   +  (A^\top w)^\top \ \mu   \ \mathds{1}_{(A^Tu)^\top x  \ge c   + u^\top A \mu }  \\
		& = &  k^\top  x \  \mathds{1}_{s^\top x  \ge c'}   +  c''  \  \mathds{1}_{s^\top x  \ge c'} 
	\end{eqnarray*}
	with $ k =  A^\top w$, $s =A^Tu$,  $  c'=c   + u^\top A \mu$,  and $c'' =    (A^\top w)^\top \ \mu  $, which implies that 
	\begin{eqnarray*}
		\Big \{ f_{\theta,\eta} \mid  \  \   (\theta, \eta) \in \Theta  \times H  \Big \}  \subseteq  \mathcal G + \mathcal G
	\end{eqnarray*}
	with $\mathcal G$ the class defined in (\ref{ClassG}).  As shown in the proof of Theorem \ref{Weakconv}, the class $\mathcal G$ is a VC-class and hence in particular Euclidean, meaning that the covering numbers grow polynomially as the radius decreases, see \citet[p.~64]{wellner2005empirical}.  It follows from the preservation of the Euclidean property under sums \citep[Proposition 8.5]{wellner2005empirical} that the class in (\ref{Donskf}) is also $P$-Donsker.  

	To show \eqref{Cond1}, it follows from law of large numbers and the continuous mapping theorem that 
	\begin{eqnarray*}
		(\widehat \Sigma_n^{-1/2}, \bar{X}_n)  \to_{\mathbb P}  (\Sigma^{-1/2}_0, \mu_0).
	\end{eqnarray*}
	Let $w  = v  - (v^\top u) u$. We write  
	\begin{eqnarray*}
		&&f_{\theta, \widehat{\eta}_n}(x)  -  f_{\theta,\eta_0}(x)  \\
		&&=   w^\top \left(\widehat \Sigma^{-1/2}_n (x-\bar{X}_n)\mathds{1}_{u^\top \widehat \Sigma^{-1/2}_n (x-\bar{X}_n) \ge c}  -  \Sigma^{-1/2}_0 (x- \mu_0)\mathds{1}_{u^\top  \Sigma^{-1/2}_0 (x-\mu_0) \ge c}  \right)\\
		&& = w^\top \left(\widehat \Sigma^{-1/2}_n (x-\bar{X}_n)  -  \Sigma^{-1/2}_0 (x- \mu_0)  \right )\mathds{1}_{u^\top \widehat \Sigma^{-1/2}_n (x-\bar{X}_n) \ge c} \\
		&&  \ +  \  w^\top  \Sigma^{-1/2}_0 (x- \mu_0)   \left(\mathds{1}_{u^\top \widehat \Sigma^{-1/2}_n (x-\bar{X}_n) \ge c}  - 
		\mathds{1}_{u^\top  \Sigma^{-1/2}_0 (x-\mu_0) \ge c}  \right)  \\
		&&  =  w^\top   \Big((\widehat \Sigma^{-1/2}_n  -  \Sigma^{-1/2}_0) x  -  (\widehat{\Sigma}^{-1/2}_n -  \Sigma^{-1/2}_0)  \bar{X}_n  \\
		&&  \  \ \   \  \  \   \  \   -  \Sigma^{-1/2}_0 (\bar{X}_n  - \mu_0) \Big)   \mathds{1}_{u^\top  \Sigma^{-1/2}_0 (x-\mu_0) \ge c}    \\
		&&   \  \ +  \  w^\top  \Sigma^{-1/2}_0 (x- \mu_0)   \left(\mathds{1}_{u^\top \widehat \Sigma^{-1/2}_n (x-\bar{X}_n) \ge c}  - 
		\mathds{1}_{u^\top  \Sigma^{-1/2}_0 (x-\mu_0) \ge c}  \right)  \\
		&& =  A_n(x)  +  B_n(x).
	\end{eqnarray*}
	Since $\Vert w \Vert \le 2$, it follows that
	\begin{eqnarray*}
		P(\|A_n\|^2)  & \le  & 8   \ ||| \widehat \Sigma^{-1/2}_n  -  \Sigma^{-1/2}_0 |||^2  \  E [ \Vert X \Vert^2]  +  8  \ ||| \widehat \Sigma^{-1/2}_n  -  \Sigma^{-1/2}_0 |||^2  \Vert \bar{X}_n \Vert^2   \\
		&&  +  \ 8  \ ||| \Sigma^{-1/2}_0 |||^2  \Vert \bar{X}_n -  \mu_0 \Vert^2  \\
		&&  \to_{\PP}  0,
	\end{eqnarray*}
	where  $||| \cdot ||| $ is the spectral norm of a symmetric $d \times d$ matrix. To handle  the term $B_n(x)$, note that
	\begin{eqnarray*}
		&&  \big \vert \mathds{1}_{u^\top \widehat \Sigma^{-1/2}_n (x-\bar{X}_n) \ge c}  -   \mathds{1}_{u^\top  \Sigma^{-1/2}_0 (x-\mu_0) \ge c}   \big \vert \\
		&&\le  \mathds{1}_{u^\top \widehat \Sigma^{-1/2}_n (x-\bar{X}_n) \ge c,  \  u^\top  \Sigma^{-1/2}_0 (x-\mu_0) < c}  + \   \mathds{1}_{u^\top \widehat \Sigma^{-1/2}_n (x-\bar{X}_n) < c, \   u^\top  \Sigma^{-1/2}_0 (x-\mu_0) \ge c}.
	\end{eqnarray*}
	Now, for any $M > 0$ we have that
	\begin{eqnarray*}
		&& \left \{u^\top \widehat \Sigma^{-1/2}_n (x-\bar{X}_n) \ge c,  \  u^\top  \Sigma^{-1/2}_0 (x-\mu_0) < c \right \}\\
		&& \subset  \left \{u^\top  \Sigma^{-1/2}_0 (x-\mu_0) \ge c  - \delta_n,  \  u^\top  \Sigma^{-1/2}_0 (x-\mu_0) < c , \  \Vert x \Vert \le M \right \}  \\
		&&   \  \  \   \cup  \  \{   \Vert x \Vert >  M  \}    
	\end{eqnarray*}
	with 
	\begin{eqnarray*}
		\vert \delta_n \vert  \le &  ||| \widehat{\Sigma}^{-1/2}_n -  \Sigma^{-1/2}_0  |||  \   M   +  ||| \widehat{\Sigma}^{-1/2}_n -  \Sigma^{-1/2}_0  ||| \  \Vert \bar{X}_n \Vert \\
		& \qquad  +   ||| \Sigma^{-1/2}_0 |||  \  \Vert \bar{X}_n -  \mu_0 \Vert  =    o_{\mathbb P}(1),
	\end{eqnarray*}
	implying that for any $\nu > 0$  and $\epsilon > 0$ there exists $n_0 \in \mathbb N$ such that for all $n \ge n_0$
	\begin{eqnarray*}
		&& \PP\left(u^\top \widehat \Sigma^{-1/2}_n (X-\bar{X}_n) \ge c,  \  u^\top  \Sigma^{-1/2}_0 (X-\mu_0) < c \right)  \\ 
		&&\le   \PP\left(u^\top  \Sigma^{-1/2}_0 (X-\mu_0) \ge c  - \nu,  \  u^\top  \Sigma^{-1/2}_0 (X-\mu_0) < c\right)  + 2\epsilon/3
	\end{eqnarray*}
	by choosing $M$ such that $\PP(\Vert X \Vert > M) \le \epsilon/3$ and $n_0$ large enough such that $\PP(\delta_n > \nu)  \le \epsilon/3$.  
	Since $u \in \mathcal{S}_{d-1}$ and $Y = \Sigma^{-1/2}_0 (X - \mu_0)$ is spherically symmetric, we have  that  $u^\top  \Sigma^{-1/2}_0 (X-\mu_0)  \stackrel{d}{=} V$ with $V$ distributed as the first coordinate of $Y$.  Thus,  it follows that 
	\begin{eqnarray*}
		&& P\left(u^\top \widehat \Sigma^{-1/2}_n (X-\bar{X}_n) \ge c,  \  u^\top  \Sigma^{-1/2}_0 (X-\mu_0) < c \right)  \\
		&& \le P\bigg ( V  \in [ c -\nu, c ) \bigg)  + 2\epsilon/3  \\
		&& = F_{V}(c)  -  F_{V}(c- \nu)   + 2 \epsilon/3\\
		&& \le \epsilon,
	\end{eqnarray*}
	for all $c \in \RR$, using (uniform) continuity of the distribution of  $V$ and taking $\nu $ to be small enough. 
	By the Cauchy-Schwarz inequality, it follows that 
	\begin{eqnarray*}
		P(\|B_n\|^2)  \leq 4 \,\epsilon\, E[\Vert Y\Vert^2]  
	\end{eqnarray*}
	for sufficiently large $n$. Since the second indicator in $B_n(x)$ can be handled similarly, (\ref{Cond1}) follows.  
	
\end{proof}

\begin{lemma}\label{ConvSigma}
	Suppose that $X_1, \ldots, X_n$ are i.i.d. $ \in \RR^d$ such that $\Ex[\Vert  X_1 \vert^4 ]  < \infty$, with common mean $\mu_0 $ and covariance matrix $\Sigma_0$ assumed to be positive definite. If $f: \RR^d \to \RR$ is such that $P f^2 < \infty$ and $E[X^2 \, f^2(X)]< \infty$, then  as $n \to \infty$
	\begin{eqnarray*}
		\sqrt{n} \left( 
		\begin{array}{c}
			\widehat \Sigma^{-1}_n - \Sigma^{-1}_0 \\
			\widehat \Sigma^{-1/2}_n   -  \Sigma^{-1/2}_0\\
			\hspace{0.2cm}  \bar{X}_n  - \mu_0\\
			\hspace{0.2cm}  \PP_n(f)  - P f
		\end{array}
		\right)  &\Rightarrow& 
		\left(
		\begin{array}{c}
			-\Sigma^{-1}_0 \mathbb{K}  \Sigma^{-1}_0 \\
			-\int_0^\infty e^{-t \Sigma^{-1/2}_0}   \Sigma^{-1}_0 \mathbb{K}  \Sigma^{-1}_0 e^{-t \Sigma^{-1/2}_0}  \, \dd t  \stackrel{d}{=} \mathbb S \\
			\mathbb D\\
			\mathbb{G}(f)
		\end{array}
		\right) 
	\end{eqnarray*}
	where $\mathbb K$  and  $\mathbb D$ are a centered  Gaussian  $d \times d$ matrix and  $d$-dimensional  vector  respectively such that  for any vector $a \in \RR^d$ the covariance matrix of the $(2d+1)$-dimensional centered Gaussian vector  $(\mathbb K a, \mathbb D, \mathbb{G} (f))^\top$ is the $(2 d+1)\times (2d+1)$ matrix $\Gamma(a)$ given by 
	\begin{eqnarray*}
		\Gamma(a) = \left(
		\begin{array}{ccc}
			\Gamma_{11}(a)   &        \Gamma_{12}(a) &  \Gamma_{13}(a,f)  \\
			\Gamma_{12}(a)^\top  &   \Sigma_0 & \Gamma_{23}(f) \\
			\Gamma_{13}(a,f)^\top & \Gamma_{23}(f)^\top & P f^2 - (Pf)^2
		\end{array}
		\right)
	\end{eqnarray*}
	with
	\begin{eqnarray*}
		\Gamma_{11}(a)  &= & E\left[  \left((X - \mu_0)  (X- \mu_0)^\top -\Sigma_0\right) a  a^\top   \left((X - \mu_0)  (X- \mu_0)^\top -\Sigma_0\right) \right], \\
		\Gamma_{12}(a)  &=  &    E\left[  \left((X - \mu_0)  (X- \mu_0)^\top -\Sigma_0\right) a   (X  - \mu_0)^\top \right],\\
		\Gamma_{13}(a,f)  &=  &    E\left[  \left((X - \mu_0)  (X- \mu_0)^\top -\Sigma_0\right) a  \, (f(X)  - P f) \right],\\
		\Gamma_{23}(f)  &=  &    E\left[  (X  - \mu_0)\, (f(X)  - P f) \right].
	\end{eqnarray*}
	
\end{lemma}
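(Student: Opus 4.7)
The plan is to first reduce to a joint multivariate central limit theorem for i.i.d.~sums, and then apply the delta method with the appropriate derivatives for the maps $\Sigma \mapsto \Sigma^{-1}$ and $\Sigma \mapsto \Sigma^{-1/2}$.

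First, set $S_i = (X_i-\mu_0)(X_i-\mu_0)^\top$ and observe the identity
\[
\widehat\Sigma_n = \frac{1}{n}\sum_{i=1}^n S_i - (\bar X_n - \mu_0)(\bar X_n - \mu_0)^\top.
\]
Since $\bar X_n - \mu_0 = O_\PP(n^{-1/2})$, the second term is $O_\PP(n^{-1})$, and hence
$\sqrt n(\widehat \Sigma_n - \Sigma_0) = \sqrt n\bigl(n^{-1}\sum_i S_i - \Sigma_0\bigr) + o_\PP(1)$.
The joint vectors $(S_i - \Sigma_0,\ X_i-\mu_0,\ f(X_i)-Pf)$ are i.i.d.~with mean zero, and under the moment assumptions $\Ex\|X\|^4<\infty$, $\Ex[X^2 f^2(X)]<\infty$ and $Pf^2 < \infty$, they have a finite covariance. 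The classical multivariate CLT then yields the joint weak convergence
\[
\sqrt n\bigl(\widehat \Sigma_n - \Sigma_0,\ \bar X_n - \mu_0,\ \PP_n f - Pf\bigr) \Rightarrow (\mathbb K, \mathbb D, \mathbb G(f)),
\]
where the joint covariance structure is precisely the one displayed via $\Gamma_{11}, \Gamma_{12}, \Gamma_{13}, \Gamma_{23}$ after a straightforward computation of cross-moments.

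Next, the delta method handles the two smooth transformations. The map $\phi(\Sigma) = \Sigma^{-1}$ is Fréchet differentiable at the positive-definite $\Sigma_0$ with derivative $D\phi(\Sigma_0)[H] = -\Sigma_0^{-1} H \Sigma_0^{-1}$, giving
$\sqrt n(\widehat\Sigma_n^{-1} - \Sigma_0^{-1}) \Rightarrow -\Sigma_0^{-1} \mathbb K \Sigma_0^{-1}$.
For the map $\psi(\Sigma) = \Sigma^{-1/2}$, set $C = \Sigma^{-1/2}$ so that $C^2 = \Sigma^{-1}$; differentiating this relation gives the Sylvester equation
\[
C \cdot dC + dC\cdot C = -\Sigma^{-1} d\Sigma\, \Sigma^{-1}.
\]
Since $C = \Sigma_0^{-1/2}$ is symmetric positive definite, the standard integral representation of the solution $X$ to $CX+XC=Y$ is $X = \int_0^\infty e^{-tC}\,Y\,e^{-tC}\,\dd t$, whence
\[
D\psi(\Sigma_0)[H] = -\int_0^\infty e^{-t\Sigma_0^{-1/2}}\,\Sigma_0^{-1} H \Sigma_0^{-1}\,e^{-t\Sigma_0^{-1/2}}\,\dd t.
\]
Applying the functional delta method to the joint convergence above, together with the continuous mapping theorem for the stacked transformation $(\Sigma, \mu, z) \mapsto (\phi(\Sigma), \psi(\Sigma), \mu, z)$, yields the claimed joint Gaussian limit, with $\mathbb S$ defined by substituting $H = \mathbb K$ in $D\psi(\Sigma_0)[\cdot]$.

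The main technical step is the derivation of $D\psi$: one must verify that the operator $H \mapsto CH + HC$ is invertible on the space of symmetric matrices (which follows from positive-definiteness of $C$), justify the Fréchet (not merely directional) differentiability, and check convergence of the integral representation. The remaining ingredients---the finite-moment bookkeeping for the CLT covariances, the $O_\PP(n^{-1})$ negligibility of the mean-correction in $\widehat\Sigma_n$, and the assembly of the joint limit---are routine.
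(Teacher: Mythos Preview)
Your proof is correct and follows essentially the same approach as the paper: reduce $\widehat\Sigma_n$ to the i.i.d.~average $n^{-1}\sum_i(X_i-\mu_0)(X_i-\mu_0)^\top$ up to $o_\PP(n^{-1/2})$, apply the multivariate CLT jointly, and then use the delta method with the derivatives of $\Sigma\mapsto\Sigma^{-1}$ and $\Sigma\mapsto\Sigma^{-1/2}$. The only cosmetic difference is that the paper obtains the derivative of $\Sigma\mapsto\Sigma^{-1/2}$ by composing the known derivative of the square-root map $A\mapsto A^{1/2}$ (evaluated at $A=\Sigma_0^{-1}$) with that of the inverse, whereas you differentiate the relation $C^2=\Sigma^{-1}$ directly to obtain the Sylvester equation and then invoke the same integral representation; both routes yield the identical formula for $\mathbb S$.
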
 

\medskip

\begin{proof}[Proof of Lemma \ref{ConvSigma}] 
	It is well-known and easy to see that $\widehat \Sigma_n$ is asymptotically equivalent to 
	$n^{-1}   \sum_{i=1}^n (X_i  - \mu_0) (X_i - \mu_0)^\top$. Thus, from the central limit theorem, we have
	\begin{eqnarray*}
		\sqrt{n} \left(
		\begin{array}{c}
			\frac{1}{n}   \sum_{i=1}^n (X_i  - \mu_0) (X_i - \mu_0)^\top   -  \Sigma_0\\
			\bar{X}_n - \mu_0  \\
			\PP_n(f)  - P f
		\end{array}
		\right)  \Rightarrow 
		\left(
		\begin{array}{c}
			\mathbb{K} \\
			\mathbb{D} \\
			\mathbb{G}(f)
		\end{array}
		\right)
	\end{eqnarray*}
	where the covariance matrix of the centered Gaussian vector $(\mathbb K a, \mathbb D)^\top \in \RR^{2d}$ is $\Gamma$ given above. 
	
	Now, the operator $\Sigma \mapsto \Sigma^{-1}$  defined on the sub-space of invertible matrices in $\RR^{d \times d}$ is differentiable  at any invertible matrix $A$ with gradient  $H \mapsto - A^{-1}  H A^{-1}$ , where $H \in \RR^{d \times d}$.  Also, it is known that the operator $\Sigma \mapsto \Sigma^{1/2}$, when defined on the space of positive definite matrices, is differentiable with gradient at $A$ given by 
	\begin{eqnarray*}
		H \mapsto  \int_0^\infty e^{-t A^{1/2}}  H e^{-t A^{1/2}}  dt
	\end{eqnarray*}
	which can be shown to be the solution, $S$,  of the Sylvester equation $A^{1/2}  S +  S A^{1/2}  =  H$.

	Using the delta-method and the asymptotic equivalence mentioned above,  it follows that 
	\begin{eqnarray*}
		\sqrt{n} \left(
		\begin{array}{c}
			\widehat \Sigma^{-1}_n   -  \Sigma^{-1}_0\\
			\widehat \Sigma^{-1/2}_n   -  \Sigma^{-1/2}_0\\
			\bar{X}_n - \mu_0 \\
			\PP_n(f)  - P f 
		\end{array}
		\right)  \Rightarrow 
		\left(
		\begin{array}{c}
			-\Sigma^{-1}_0\mathbb{K} \Sigma^{-1}_0\\
			-\int_0^\infty e^{-t \Sigma^{-1/2}_0}   \Sigma^{-1}_0 \mathbb{K}  \Sigma^{-1}_0 e^{-t \Sigma^{-1/2}_0}  \dd t  \\
			\mathbb{D}\\
			\mathbb{G}(f)
		\end{array}
		\right)
	\end{eqnarray*}
	which completes the proof.
\end{proof}

\bigskip


\begin{proof}[Proof of Corollary \ref{Weakconvresc}]
	Since $ P   f_{\theta,\eta_0} =0$ for any $\theta \in \Theta$, 
	\begin{eqnarray*}
		\sqrt n \mathbb P_n  f_{\theta,\widehat{\eta}_n }   &=   &   \sqrt n \left(\mathbb P_n  f_{\theta,\widehat{\eta}_n }   -  P   f_{\theta,\eta_0}   \right )    \\
		& =   &    \mathbb{G}_n  (f_{\theta,\widehat{\eta}_n } -  f_{\theta,\eta_0})   +  \mathbb{G}_n f_{\theta, \eta_0} +  \sqrt n \,  P  f_{\theta, \widehat \eta_n}.
	\end{eqnarray*}
	In view of Theorem \ref{Th: empestim}, the first term is a process which converges weakly to $0$. By Theorem \ref{Weakconv}, we know that $\mathbb{G}_n f_{\theta, \eta_0} $  converges weakly to $\mathbb G$. Hence, it remains to find the weak limit of the third term (jointly with $\mathbb G$).  To this end, first fix  $\theta = (u,v, c) \in \Theta$. As shown in the proof of Theorem \ref{Th: empestim}, setting  $w= v - (v^\top u) u$  we have that
	\begin{eqnarray*}
		\sqrt n  P  f_{\theta, \widehat \eta_n}   & =   &    \sqrt n  P  (f_{\theta, \widehat \eta_n}   - f_{\theta,\eta_0}) \\  
		& = &   I_n  +   II_n 
	\end{eqnarray*}
	where 
	\begin{eqnarray*}
		I_n  &= &  w^\top  \widehat \Sigma^{-1/2}_n \sqrt n (\mu_0 - \bar{X}_n)  P(u^\top  Y > c)  +  \\
		&&  \ +   w^\top \sqrt n (\widehat{\Sigma}_n^{-1/2}  - \Sigma^{-1/2}_0)  \Sigma^{1/2}_0  E[Y \mathds{1}_{u^\top Y > c}]  
	\end{eqnarray*}
	and 
	\begin{eqnarray*}
		&& II_n\\
		& & =     w^\top \sqrt n (\widehat \Sigma^{-1/2}_n  - \Sigma^{-1/2}_0) \\
		&& \  \ \  \    \times \  E \left[(X-\mu_0)  \big(\mathds{1}_{u^\top \widehat \Sigma^{-1/2}_n (X- \bar{X}_n) > c}  -  \mathds{1}_{u^\top  \Sigma^{-1/2}_0 (X- \mu_0) > c }\big)\right]\\
		& &  \  \  + \   w^\top  \sqrt n  \Sigma^{-1/2}_0 E \left[(X-\mu_0)  \big(\mathds{1}_{u^\top \widehat \Sigma^{-1/2}_n (X- \bar{X}_n) > c}  -  \mathds{1}_{u^\top  \Sigma^{-1/2}_0 (X- \mu_0) > c }\big)\right]  \\
		& &   \  \  + \  w^\top   \widehat \Sigma^{-1/2}_n   \sqrt n  (\mu_0 -  \bar{X}_n) \\
		&&  \  \  \ \ \times \   \left( P(u^\top \widehat \Sigma^{-1/2}_n (X- \bar{X}_n) > c)  -   P(u^\top  \Sigma^{-1/2}_0 (X- \mu_0) > c )  \right).
	\end{eqnarray*}
	It follows from  Lemma \ref{ConvSigma} and the continuous mapping theorem that 
	\begin{eqnarray*}
		I_n \Rightarrow -w^T \mathbb{Z} P(u^T Y > c)  + w^T \mathbb{S} \Sigma^{1/2}_0  E[Y \mathds{1}_{u^T Y > c} ] 
	\end{eqnarray*}
	as $n \to \infty$, where $\mathbb Z = \Sigma_0^{-1/2} \mathbb{D}$, and $\mathbb{D}$ and $\mathbb{S}$ are as in Lemma \ref{ConvSigma}. Using spherical symmetry of $Y$, the weak convergence above can be given by the equivalent form 
	\begin{eqnarray}\label{ConvIn}
	I_n \to _d -w^T \mathbb{Z} P(V > c)  + w^T \mathbb{S} \Sigma^{1/2}_0  u E[V \mathds{1}_{V > c} ] 
	\end{eqnarray}
	where $V$ is distributed for example as the first coordinate of $Y$.

	Note first that both the first and last term in $II_n$ converge to $0$ by Lemma \ref{ConvSigma}, the Central Limit Theorem and the arguments used below showing that $P(u^\top \widehat \Sigma^{-1/2}_n (X- \bar{X}_n) > c)  -   P(u^\top \widehat \Sigma^{-1/2}_0 (X- \mu_0) > c )  = O_p(n^{-1/2})$ (we can also use for this part arguments similar to those used in the proof of Theorem \ref{Th: empestim}).   For the middle term, $II_{n, 2}$ say,  we can write 
	\begin{eqnarray*}
		&& E \left[(X-\mu_0)  \big(\mathds{1}_{u^\top \widehat \Sigma^{-1/2}_n (X- \bar{X}_n) > c}  -  \mathds{1}_{u^\top  \Sigma^{-1/2}_0 (X- \mu_0) > c }\big)\right]   \\
		&& =  \Sigma_0^{1/2}E \left[Y \mathds{1}_{u^\top \widehat \Sigma^{-1/2}_n (\Sigma^{1/2} _0  Y  + \mu_0- \bar{X}_n) > c} \right] - \Sigma^{1/2}_0  E\left( Y \mathds{1}_{u^\top  Y > c }\right)\\
		&&  =   \Sigma_0^{1/2}  E\left[  Y  (\mathds{1}_{u^\top_n  Y > c_n}  -    \mathds{1}_{u^\top  Y > c}) \right]  \\
		&& = \Sigma_0^{1/2} \left\{  E\left[  Y \mathds{1}_{u^\top_n  Y > c_n} \right]  -   E\left[  Y \mathds{1}_{u  Y > c} \right] \right\}
	\end{eqnarray*}
	with $u_n=  \Sigma^{1/2}_0 \widehat \Sigma^{-1/2}_n    u $,  $c_n =  c +  u^\top \widehat \Sigma^{-1/2}_n(\bar{X}_n - \mu_0)$ and $Y = \Sigma^{-1/2}_0(X-\mu_0)$. It follows that 
	\begin{eqnarray}\label{MiddleTerm}
	II_{n,2}  =  w^T  \sqrt n \left\{  E\left[  Y \mathds{1}_{u^\top_n  Y > c_n} \right]  -   E\left[  Y \mathds{1}_{u  Y > c} \right] \right\}.
	\end{eqnarray}
	Since $Y$ is spherically symmetric, we have that  
	\begin{eqnarray*}
		E\left[  Y \mathds{1}_{u^\top_n  Y > c_n} \right] & =   &  u_n E\left[u^\top_n  Y \mathds{1}_{u^\top_n  Y > c_n}\right]  \\
		& = &  u_n \Vert u_n \Vert   E\left[\tilde{u}^\top_n  Y \mathds{1}_{\tilde{u}^\top_n  Y > c_n  \Vert u_n \Vert^{-1}}\right], \  \  \textrm{with $\tilde{u}_n =  u_n / \Vert u_n \Vert $}\\
		& = &    u_n \Vert u_n \Vert  E\left[V  \mathds{1}_{V > c_n  \Vert u_n \Vert^{-1}}\right]
	\end{eqnarray*}
	where $V$  denotes again the first component of the vector spherically symmetric vector $Y$.  Similar arguments and the fact that $\Vert u \Vert =1$  imply that 
	\begin{eqnarray}\label{Exp}
	E\left[  Y \mathds{1}_{u^\top_n  Y > c_n} \right]  -   E\left[  Y \mathds{1}_{u^\top  Y > c} \right] 
	&=   &  u_n \Vert u_n \Vert E\left[V  \mathds{1}_{V > c_n  \Vert u_n \Vert^{-1}}\right]  -  u E\left[ V  \mathds{1}_{V > c}\right] \notag \\
	& = & u_n \Vert u_n \Vert\left(  E\left[V  \mathds{1}_{V > c_n  \Vert u_n \Vert^{-1}}\right]   -   E\left[ V  \mathds{1}_{V > c}\right]   \right) \notag \\
	&&  +  \  \  (u_n \Vert u_n \Vert - u) E\left[ V  \mathds{1}_{V > c}\right].
	\end{eqnarray}
	Let $h_n =  c_n \Vert u_n \Vert^{-1}  - c $, and $g$ denote the density of $V$ with respect to Lebesgue measure on $\RR$.  Then
	\begin{eqnarray*}
		\sqrt n\  E[ V  (\mathds{1}_{V >  c_n  \Vert u_n \Vert^{-1}}\  -  \mathds{1}_{V > c} )] & = & - \sqrt n  \int_{c}^ {c_n  \Vert u_n \Vert^{-1}}  v g(v)  \dd y   \\
		&=&  -\sqrt n h_n ( c g(c)  +  o(1))
	\end{eqnarray*}
	where $o(1)$ does not depend on $c$ since $v \mapsto v g(v) $ is assumed to be uniformly bounded. Now we compute 
	\begin{eqnarray*}
		\sqrt n h_n &  =  &  \sqrt n \left(  \frac{c+  u^\top \widehat \Sigma^{-1/2}_n (\bar{X}_n - \mu_0)  }{\Vert u_n \Vert}  -  c \right)\\
		& = &  \frac{1}{\Vert u_n \Vert}  \sqrt n \left (c  +  u^\top  \widehat \Sigma^{-1/2}_n (\bar{X}_n - \mu_0)  -  c \Vert u_n \Vert \right)  \\
		& = &  -\frac{c}{\Vert u_n \Vert}  \sqrt n (\Vert u_n \Vert - 1)  +  \frac{u^\top}{\Vert u_n \Vert}   \sqrt n  \widehat \Sigma^{-1/2}_n (\bar{X}_n - \mu_0) \\
		& = &  -\frac{c}{\Vert u_n \Vert (1 + \Vert u_n \Vert)}  \sqrt n (\Vert u_n \Vert^2 - 1)   +  \frac{u^\top}{\Vert u_n \Vert}   \sqrt n  \widehat \Sigma^{-1/2}_n (\bar{X}_n - \mu_0).
	\end{eqnarray*}
	Furthermore,
	\begin{eqnarray*}
		\sqrt n (\Vert u_n \Vert^2 - 1)  =  -u^\top \widehat{\Sigma}^{-1/2}_n  \sqrt n  (\widehat{\Sigma}_n - \Sigma_0)  \widehat{\Sigma}^{-1/2}_n  u 
	\end{eqnarray*}
	so that
	\begin{eqnarray*}
		\sqrt n h_n  & =   & \frac{c}{\Vert u_n \Vert (1 + \Vert u_n \Vert)}   u^\top \widehat{\Sigma}^{-1/2}_n  \sqrt n  (\widehat{\Sigma}_n - \Sigma_0)  \widehat{\Sigma}^{-1/2}_n  u   \\
		&&  +  \  \ \frac{u^\top}{\Vert u_n \Vert}   \sqrt n  \widehat \Sigma^{-1/2}_n (\bar{X}_n - \mu_0).
	\end{eqnarray*}
	By Lemma \ref{ConvSigma}, $\sqrt n h_n$ admits a weak limit as $n \to \infty$. Also, it follows from the same lemma that $u_n \Vert u_n \Vert $ converges to $u$ in probability as $n \to \infty$.  Since $w^T u =0$, this implies that 
	\begin{eqnarray*}
		w^T u_n \Vert u_n \Vert \sqrt n  E[ V  (\mathds{1}_{V >  c_n  \Vert u_n \Vert^{-1}}\  -  \mathds{1}_{V > c} )]  \to 0
	\end{eqnarray*}
	in probability as $n \to \infty$. Now, we get to the second term in (\ref{Exp}).   We have that 
	\begin{eqnarray*}
		\sqrt n (u_n \Vert u_n \Vert -  u) & =&  \sqrt n (u_n -  u)  + \sqrt n  u_n ( \Vert u_n \Vert -  1)  \\ 
		& = &  \sqrt n (\Sigma^{1/2}_0 \widehat \Sigma^{-1/2}_n   u - u)  +  \frac{u_n}{\Vert u_n \Vert + 1}  \sqrt n (\Vert u_n \Vert^2  - 1)   \\
		& = &  \sqrt n  \Sigma^{1/2}_0 (\widehat \Sigma^{-1/2}_n   - \Sigma^{-1/2}_0) u  +  \frac{u_n}{\Vert u_n \Vert + 1}  \sqrt n (\Vert u_n \Vert^2  - 1)     \\
		& = &  \sqrt n  \Sigma^{1/2}_0 (\widehat \Sigma^{-1/2}_n   - \Sigma^{-1/2}_0) u   \\
		&& \  \ -  \frac{u_n}{\Vert u_n \Vert + 1}   u^\top \widehat{\Sigma}^{-1/2}_n  \sqrt n  (\widehat{\Sigma}_n - \Sigma_0)  \widehat{\Sigma}^{-1/2}_n  u. 
	\end{eqnarray*}
	Thus, only the first term in the preceding display will contribute using again that $w^T u_n/ (\Vert u_n \Vert +1 )  \to_p 0$ as $n \to \infty$. Using the expression of $II_{n, 2}$ in (\ref{MiddleTerm}) it follows from our calculations above that 
	\begin{eqnarray*}
		II_{n, 2}  \Rightarrow w^\top \Sigma^{1/2}_0  \mathbb{S} u E[V \mathds{1}_{V > c}],
	\end{eqnarray*}
	where we recall that $V$ distributed as the first coordinate of $Y$.  Putting this together with the weak convergence in (\ref{ConvIn}) it follows that  
	\begin{eqnarray*}
		\sqrt n P f_{\theta, \widehat \eta_n}  \Rightarrow -w^T \mathbb Z P(V > c)  + 2 w^T \mathbb S\Sigma^{1/2}_0  u E[ V \mathds{1}_{ V > c} ].
	\end{eqnarray*}

	Putting all pieces together, using the fact that $w^\top u =0$ and the joint weak convergence established in Lemma \ref{ConvSigma}, the claimed weak convergence 
	\begin{eqnarray*}
		\sqrt n \mathbb P_{\theta, \widehat \eta_n}  \Rightarrow  \mathbb L(\theta)
	\end{eqnarray*}
	follows for that chosen $\theta$. To show that this weak convergence holds for the whole process converges in $\ell^\infty(\Theta)$, it is enough to show that this process is tight and apply the Prohorov's Theorem, see e.g. Theorem 1.3.9 in \cite{vdvwell}. Indeed, uniqueness of the weak limit for a fixed $\theta$ will imply that the process converges weakly to $\mathbb L$.  Following the calculations detailed above, it is easy to see that for any $\epsilon > 0$, there exists a constant $C_\epsilon > 0$  such that 
	\begin{eqnarray*}
		P(\Vert \sqrt n \mathbb P f_{\cdot , \widehat{\eta}_n}\Vert_\Theta \le C_\epsilon)  \ge 1-\epsilon.
	\end{eqnarray*}
	By considering the compact set $K_\epsilon =  \{ \psi \in \ell^\infty(\Theta): \Vert \psi \Vert_\infty \le C_\epsilon \}$, we see that  the preceding inequality gives tightness.  The joint Gaussianity of $L(\theta)$ and $\mathbb G$ follows from  of Lemma \ref{ConvSigma}. This completes the proof. 
\end{proof}

\bigskip

\begin{proof}[Proof of Theorem \ref{TheoModifBoot}]
	The proof proceeds in the following steps. 
	
	\emph{Step 1.:} For $i =1, \ldots, n$, let us define 
	\begin{eqnarray*}
		Z_{ni}  = n^{-1/2} \delta_{\widehat X^*_i}  =  n^{-1/2}  \delta_{\widehat R^*_i W_i} 
	\end{eqnarray*}
	where we recall that $\widehat R^*_1, \ldots, \widehat R^*_n$ are i.i.d. random variables from the empirical distribution of the norms of the standardized observed vectors, that is, 
	\begin{eqnarray}\label{hatR}
	\widehat R_1 =  \Vert \widehat \Sigma^{-1/2}_n (X_1 -  \bar{X}_n)  \Vert, \ldots, \widehat R_n = \Vert \widehat \Sigma^{-1/2}_n (X_n -  \bar{X}_n)\Vert, 
	\end{eqnarray}
	and $W_1, \ldots, W_n$ are independently and uniformly sampled from the unit sphere $\mathcal{S}_{d-1}$, and such that they are also independent of $\widehat R_1, \ldots, \widehat R_n$.  
	Then the process $\{ \sum_{i=1}^n Z_{ni}(f), f \in \mathcal{F}_s\}$ converges weakly to $\mathbb G f, f \in \mathcal F_s$, where 
	$\mathcal F_s$ is as in Theorem \ref{CondEmpProc}, and the process $\mathbb G$ is defined in Theorem \ref{CondEmpProc}.   
	We apply this with $\mathcal F_s = \mathcal F$, where $\mathcal F$ is as in \eqref{classF}. 
	
	\bigskip
	
	\emph{Step 2.:} In this step we derive the limit distribution of the bootstrapped matrix $\widehat \Sigma^*_n$ given $X_1, X_2, \ldots,$, and show that conditional on $X_1, X_2, \ldots,$, almost surely,
	\begin{eqnarray}\label{LimitSigmastar}
	\sqrt n (\widehat \Sigma^*_n  - \mathcal{I}_d )  \Rightarrow  \mathbb K_0,
	\end{eqnarray}
	with $\mathcal{I}_d$ the $d \times d$ identity matrix and $\mathbb K_0$ a $d \times d$ matrix with centered Gaussian components such that for any vector $a \in \mathbb R^d$, the covariance matrix of $\mathbb K_0 a$ is given by 
	\begin{eqnarray*}
		\Gamma_0(a) =  \mathbb E\left[  (Y Y^\top   - \mathcal{I}_d) aa^\top  (YY^\top  - \mathcal{I}_d) \right].
	\end{eqnarray*}
	
	\bigskip
	
	\emph{Step 3.:} 
	To conclude, recall that the drift, $\mathbb L(\theta)$,  in the weak limit of Corollary \ref{Weakconvresc} is given by
	\begin{eqnarray}\label{eq:driftagain}
	\mathbb L(\theta) =  - (w^\top  \Sigma^{-1/2}_0 \mathbb D) \mathbb P(V > c)  +   w^\top (\mathbb S \Sigma^{1/2}_0  +  \Sigma^{1/2}_0  \mathbb S) u \mathbb E[ V \mathds{1}_{V > c}],
	\end{eqnarray}
	where $\theta = (u, v, c)  \in \mathcal{S}_{d-1} \times \mathcal{S}_{d-1} \times \mathbb R$, $w =  v - (v^\top u) u$,   $V$ is any component of $Y = \Sigma^{-1/2}_0  (X- \mu_0)$, and $\mathbb D$ and $\mathbb S$ are centered Gaussian vector in $\mathbb R^d$ and matrix in $\mathbb R^d \times \mathbb R^d$ such that 
	\begin{eqnarray*}
		\sqrt n \left(
		\begin{array}{c}
			\bar X_n - \mu_0 \\
			\ \ \ \widehat \Sigma^{-1/2}_n  - \Sigma^{-1/2}_0
		\end{array}
		\right)  \Rightarrow
		\left(
		\begin{array}{c}
			\mathbb D \\
			\mathbb S
		\end{array}
		\right).
	\end{eqnarray*}  
	Following the same steps of the proof of Corollary \ref{Weakconvresc} while replacing $\widehat \eta_n= (\widehat \Sigma_n^{-1/2}, \bar{X}_n)$ with 
	$\widehat \eta^*_n  = ((\widehat \Sigma^*_n)^{-1/2}, \bar{X}^*_n)$,
	the associated drift has the asymptotic distribution 
	\begin{eqnarray}\label{Drift0}
	- (w^\top \mathbb D_0) \mathbb P(V > c)  +   2 w^\top  \mathbb S_0 u \mathbb E[ V \mathds{1}_{V > c}],
	\end{eqnarray}
	where
	$\mathcal D_0  \sim \mathcal{N}(0, \mathcal{I}_d)$ and $\mathbb S_0$ is the centered Gaussian $d \times d$ matrix such that
	\begin{eqnarray*}
		\sqrt n ((\widehat \Sigma^*_n)^{-1/2}  -\mathcal{I}_d)  \Rightarrow  \mathbb S_0,
	\end{eqnarray*}
	the existence of which follows from Step 2.
	Hence, in order to show that the modified bootstrap is consistent, we need to show that the drift $\LL(\theta)$ in \eqref{eq:driftagain} is also given by the expression in \eqref{Drift0}, for which we provide the details below. This finishes the proof of the theorem. 
	\emph {Details for Step 1.}\quad Since the proof of this weak convergence should go along the same lines of the proof of Theorem \ref{CondEmpProc}, one needs to check first that the three conditions of Theorem 2.11.1 in \cite{vdvwell}.  Here, we show that the third condition is indeed satisfied as the first two ones involve very similar arguments as in the proof of Theorem \ref{CondEmpProc}. For this third condition, we need to show that 
	\begin{eqnarray*}
		\lim_{n \to \infty} \text{cov}_{\widehat{L}^*_n \otimes \mathcal U}\bigg( \sum_{i=1}^n Z_{ni}(f),  \sum_{i=1}^n Z_{ni}(g) \bigg)   =   \mathbb E_{L_0 \otimes \mathcal U} \big[ f(W R)  g(WR  \big]
	\end{eqnarray*}
	where $\widehat{L}^*_n$ is the empirical distribution $1/n \sum_{i=1}^n \delta_{\widehat R_i}$, and $\mathcal U$ denotes again the uniform distribution on the sphere $\mathcal S_{d}$.   By definition of $Z_{ni}$ we have that
	\begin{eqnarray*}
		\text{cov}_{\widehat{L}^*_n \otimes \mathcal U}\bigg( \sum_{i=1}^n Z_{ni}(f),  \sum_{i=1}^n Z_{ni}(g) \bigg) & = &  n \  \text{cov}_{\widehat{L}^*_n \otimes \mathcal U} (Z_{n1}(f), Z_{n1}(g) ) \\
		& = &  \mathbb E_{\widehat{L}^*_n \otimes \mathcal U} \big[f(\widehat R^*  W) g(\widehat R^* W)   \big ] \\
		& = &  \frac{1}{n}  \sum_{i=1}^n \mathbb E_{\mathcal U} \big[f(\widehat R_i W)  g(\widehat R_i W) \big] 
	\end{eqnarray*}
	where  for $f(x)  = f_{u, v, c}(x)  =  (v -  (v^\top  u) u)^\top  x \mathds{1}_{u^\top  x \ge c}$, and $g(x)  = g_{u', v', c'}(x)  =  (v' -  (v'^\top  u') u')^\top  x \mathds{1}_{u'^\top  x \ge c'} $  for  $(u, v), (u', v')\in \mathcal{S}^2_d$ and $c, c' \in \mathbb R$ we have that
	\begin{eqnarray*}
		f(\widehat R_i W)   =  \Vert \widehat \Sigma^{-1/2}_n (X_i-  \bar{X}_n)  \Vert  (v -  (v^\top  u) u)^\top  W \mathds{1}_{  \Vert \widehat \Sigma^{-1/2}_n (X_i-  \bar{X}_n)  \Vert u^\top   W\ge c}
	\end{eqnarray*}
	and 
	\begin{eqnarray*}
		g(\widehat R_i W)   =  \Vert \widehat \Sigma^{-1/2}_n (X_i-  \bar{X}_n)  \Vert  (v' -  (v'^\top  u') u')^\top  W \mathds{1}_{  \Vert \widehat \Sigma^{-1/2}_n (X_i-  \bar{X}_n)  \Vert u'^\top   W\ge c'}.
	\end{eqnarray*}
	Since we wish to replace $\widehat R_i$  by $R_i =  \Vert \Sigma_0^{-1/2} (X_i - \mu_0) \Vert$ for $i =1, \ldots, n$, we can write that 
	\begin{eqnarray}\label{Breakdown}
	\frac{1}{n}  \sum_{i=1}^n \mathbb E_{\mathcal U} \big[f(\widehat R_i W)  g(\widehat R_i W) \big] 
	& = &  \frac{1}{n}  \sum_{i=1}^n \mathbb E_{\mathcal U} \big[f(R_i W)  g(R_i W) \big] \nonumber \\
	&&  +  \frac{1}{n}  \sum_{i=1}^n \mathbb E_{\mathcal U} \big[(f(\widehat R_i W) - f(R_i W))  g(\widehat R_i W) \big]  \nonumber \\
	&& +  \frac{1}{n}  \sum_{i=1}^n \mathbb E_{\mathcal U} \big[(f(R_i W))  (g(\widehat R_i W)-g(R_i W)) \big] \nonumber \\
	&& =  \frac{1}{n}  \sum_{i=1}^n \mathbb E_{\mathcal U} \big[f(R_i W)  g(R_i W) \big]  +  A_n + B_n. \nonumber\\
	&&
	\end{eqnarray}
	Now for $i=1, \ldots, n$ we have that 
	\begin{eqnarray*}
		&& \big \vert f(\widehat R_i W) - f(R_i W) \big \vert  \\
		&& = \bigg \vert \Vert \widehat \Sigma^{-1/2}_n (X_i-  \bar{X}_n)  \Vert  (v -  (v^\top  u) u)^\top  W \mathds{1}_{  \Vert \widehat \Sigma^{-1/2}_n (X_i-  \bar{X}_n)  \Vert u^\top   W\ge c}\\
		&&  \hspace{0.5cm}  -    \Vert \Sigma^{-1/2}_0 (X_i-  \mu_0)  \Vert  (v -  (v^\top  u) u)^\top  W \mathds{1}_{  \Vert \Sigma^{-1/2}_0 (X_i-  \mu_0)  \Vert u^\top   W\ge c} \bigg\vert \\
		&& \le \bigg \vert \bigg( \Vert \widehat \Sigma^{-1/2}_n (X_i-  \bar{X}_n) \Vert  -   \Vert  \Sigma^{-1/2}_0 (X_i-  \mu_0)  \Vert  \bigg) \\
		&& \qquad \qquad \cdot  (v -  (v^\top  u) u)^\top  W \mathds{1}_{  \Vert \widehat \Sigma^{-1/2}_n (X_i-  \bar{X}_n)  \Vert u^\top   W\ge c} \bigg \vert\\
		&&  \hspace{0.5cm} +   \  \Vert  \Sigma^{-1/2}_0 (X_i-  \mu_0)  \Vert   \times \vert (v -  (v^\top  u) u)^\top  W  \vert \\
		&& \qquad \qquad  \cdot \bigg \vert   \mathds{1}_{  \Vert \widehat \Sigma^{-1/2}_n (X_i-  \bar{X}_n)  \Vert u^\top   W\ge c}  -   \mathds{1}_{  \Vert \widehat \Sigma^{-1/2}_0 (X_i-  \mu_0)  \Vert u^\top   W\ge c}  \bigg \vert \\
		&& = I_{i,n} + II_{i,n}  
	\end{eqnarray*}
	where $I_{n, i}$ and $II_{n, ii}$ are functions of the observed data as well as $W \sim \mathcal U$.  The goal now is to find an upper bound of the expectation of these terms with respect to the distribution $\mathcal U$.   We have that 
	\begin{eqnarray*}
		&& \Vert \widehat \Sigma^{-1/2}_n (X_i-  \bar{X}_n) \Vert \\
		&& = \Vert \Sigma^{-1/2}_0(X_i  - \mu_0)  +  \big(\widehat \Sigma^{-1/2}_n  - \Sigma^{-1/2}_0\big)  (X_i-  \mu_0)   +  \widehat \Sigma^{-1/2}_n (-\bar{X}_n - \mu_0) \Vert
	\end{eqnarray*}
	and hence 
	\begin{eqnarray}\label{Ineq1}
	& &\bigg \vert \Vert \widehat \Sigma^{-1/2}_n (X_i-  \bar{X}_n) \Vert -   \Vert \Sigma^{-1/2}_0(X_i  - \mu_0)  \Vert \bigg \vert\\
	&\le   & \Vert \big(\widehat \Sigma^{-1/2}_n  - \Sigma^{-1/2}_0\big)  (X_i-  \mu_0)  \Vert +  \Vert \widehat \Sigma^{-1/2}_n (-\bar{X}_n +\mu_0) \Vert \nonumber \\
	& \le &  \Vert \widehat \Sigma^{-1/2}_n  - \Sigma^{-1/2}_0 \Vert \cdot \Vert X_i-  \mu_0 \Vert +  \Vert \widehat \Sigma^{-1/2}_n \Vert \cdot \Vert \bar{X}_n - \mu_0  \Vert, \nonumber 
	\end{eqnarray}
	where $\Vert A \Vert$ of a matrix $A$ denotes the usual operator norm defined here with respect to the Euclidean norm.  Using the Cauchy-Schwarz inequality and the fact that $\Vert  (v -  (v^\top  u) u) \Vert \le 2$ and $\Vert W \Vert =1$, the latter implies that 
	\begin{eqnarray*}
		I_{n, i} \le 2 \left(\Vert \widehat \Sigma^{-1/2}_n  - \Sigma^{-1/2}_0 \Vert \cdot \Vert X_i-  \mu_0 \Vert +  \Vert \widehat \Sigma^{-1/2}_n \Vert \cdot \Vert \bar{X}_n - \mu_0  \Vert\right)
	\end{eqnarray*}
	and hence, using again  $\Vert  (v -  (v^\top  u) u) \Vert \le 2$ and $\Vert W \Vert =1$ in addition to the inequality obtained in (\ref{Ineq1}), it follows that
	{\small 
		\begin{eqnarray*}
			& & I_{n, i} \cdot  \vert  g(\widehat R_i W) \vert\\
			&\le  &  4  \left(\Vert \widehat \Sigma^{-1/2}_n  - \Sigma^{-1/2}_0 \Vert \cdot \Vert X_i-  \mu_0 \Vert +  \Vert \widehat \Sigma^{-1/2}_n \Vert \cdot \Vert \bar{X}_n - \mu_0  \Vert\right)  \Vert \widehat \Sigma^{-1/2}_n  (X_i - \bar X_n) \Vert \\
			& \le &  4  \left(\Vert \widehat \Sigma^{-1/2}_n  - \Sigma^{-1/2}_0 \Vert \cdot \Vert X_i-  \mu_0 \Vert +  \Vert \widehat \Sigma^{-1/2}_n \Vert \cdot \Vert \bar{X}_n - \mu_0  \Vert\right)  \\
			&&   \hspace{0.3cm} \times  \left(\Vert \Sigma^{-1/2}_0(X_i  - \mu_0)  \Vert   +  \Vert \widehat \Sigma^{-1/2}_n  - \Sigma^{-1/2}_0 \Vert \cdot \Vert X_i-  \mu_0 \Vert +  \Vert \widehat \Sigma^{-1/2}_n \Vert \cdot \Vert \bar{X}_n - \mu_0  \Vert  \right) \\
			&& \le  \alpha_n  \Vert X_i - \mu_0 \Vert^2  + \beta_n   \Vert X_i - \mu_0 \Vert  + \gamma_n 
		\end{eqnarray*}
	}
	where $\alpha_n, \beta_n $ and $\gamma_n$ are $O_p(n^{-1/2})$  as a consequence of Lemma 5.2 and the weak law of large numbers. Since $\mathbb E[\Vert X- \mu_0\Vert^2]  < \infty$ is satisfied under the assumption that $\mathbb E[\Vert X\Vert^4] < \infty$ we can conclude that
	\begin{eqnarray*}
		\frac{1}{n}  \mathbb{E}_{\mathcal U} [I_{n, i} \cdot  \vert  g(\widehat R_i W) \vert]  = O_p(n^{-1/2}).
	\end{eqnarray*}
	Now, we turn to the terms $II_{n, i}, i =1, \ldots, n$. We have that 
	\begin{eqnarray}\label{Expect}
	&& \mathbb E_{\mathcal{U}}[II_{n, i}  \cdot  \vert  g(\widehat R_i W) ] \le 4  \Vert \Sigma^{-1/2}_0  (X_i - \mu_0) \Vert\\
	&& \cdot  \bigg(\mathbb P_{\mathcal{U}}\big( \Vert \widehat{\Sigma_n}^{-1/2} (X_i - \bar X_n ) \Vert u^\top  W  \le c,    \Vert \Sigma_0^{-1/2} (X_i - \mu_0 ) \Vert u^\top  W  > c\big)  \nonumber \\
	&& \quad + \mathbb P_{\mathcal U}\big( \Vert \widehat{\Sigma_n}^{-1/2} (X_i - \bar X_n ) \Vert u^\top  W  >  c,    \Vert \Sigma_0^{-1/2} (X_i - \mu_0 ) \Vert u^\top  W \le c\big)   \bigg). \nonumber 
	\end{eqnarray}
	Using again the inequality obtained in (\ref{Ineq1}), we can find $a_n = O_p(n^{-1/2})$ and $b_n = O_p(n^{-1/2})$ both  independent of $i$ such that  $\Vert \widehat{\Sigma_n}^{-1/2} (X_i - \bar X_n ) \Vert = \Vert \Sigma^{-1/2}_0 (X_i - \mu_0) \Vert  (1+ a_n) - b_n$. Hence,   
	\begin{eqnarray*}
		&&\mathbb P_{\mathcal{U}}\bigg( \Vert \widehat{\Sigma_n}^{-1/2} (X_i - \bar X_n ) \Vert u^\top  W  \le c,    \Vert \Sigma_0^{-1/2} (X_i - \mu_0 ) \Vert u^\top  W  > c\bigg) \\
		&& = \mathbb P_{\mathcal{U}}\bigg(\Sigma_0^{-1/2} (X_i - \mu_0 ) (1+ a_n) \Vert u^\top  W \in (c, c + b_n ]  \bigg) \\
		&& \le \mathbb P_{\mathcal{U}}\bigg(\Sigma_0^{-1/2} (X_i - \mu_0 )  \Vert u^\top  W \in (2c,2(c + b_n) ]  \bigg)  \\
		&& =  \frac{b_n}{\Vert \Sigma^{-1/2}_0 (X_i - \mu_0)\Vert}
	\end{eqnarray*}
	using that $u^\top W $ is uniformly distributed on $[-1,1]$. Handling the second probability in (\ref{Expect})  in a similar fashion gives  
	\begin{eqnarray*}
		\frac{1}{n} \sum_{i=1}^n \mathbb E_{\mathcal{U}}[II_{n, i}  \cdot  \vert  g(\widehat R_i W) ] = O_p(n^{-1/2})
	\end{eqnarray*}
	and we conclude that
	\begin{eqnarray*}
		A_n =  O_p(n^{-1/2}).
	\end{eqnarray*}
	Similarly, we can show that $B_n= O_p(n^{-1/2})$.  Now, by the strong law of large numbers, 
	\begin{eqnarray*}
		\frac{1}{n}  \sum_{i=1}^n \mathbb E_{\mathcal U} \big[f(R_i W)  g(R_i W) \big]  &\to_{\text{a.s.}}& \mathbb E_{L_0 \otimes \mathcal U}[f(WR) g(WR)], \\ 
		& =&  \mathbb E[f(Y) g(Y)] = \cov(f(Y), g(Y)), \  
	\end{eqnarray*}
	where $L_0$ is the true distribution of $R$. This in turn implies that
	\begin{eqnarray*}
		\lim_{n \to \infty} \cov_{\widehat{L}^*_n \otimes \mathcal U}\bigg( \sum_{i=1}^n Z_{ni}(f),  \sum_{i=1}^n Z_{ni}(g) \bigg)  = \cov(f(Y), g(Y))
	\end{eqnarray*}
	which is exactly the covariance of the Gaussian process $\mathbb G$.
	
	\bigskip
	
	\emph{Details for Step 2.:}    Recall that by definition
	\begin{eqnarray*}
		\widehat \Sigma^*_n &= &  \frac{1}{n} \sum_{i=1}^n (\tilde{X}^*_i  - \bar{X}^*_n) (\tilde{X}^*_i  - \bar{X}^*_n)^\top 
		\\
		& = &  \frac{1}{n} \sum_{i=1}^n \tilde{X}^*_i (\tilde{X}^*_i)^\top   -  \bar{X}^*_n (\bar{X}^*_n)^\top  \\
		& = &  I_n + II_n
	\end{eqnarray*} 
	where $\tilde{X}^*_i = \widehat R^*_i  W_i$, with $\widehat R^*_i$ is randomly drawn from $\widehat{R}_1, \ldots, \widehat{R}_n$ given in (\ref{hatR})  and $W_1, \ldots, W_n$ are i.i.d. $ \sim \mathcal U$ and independent of $\widehat R_1, \ldots, \widehat R_n$.  We can write 
	\begin{eqnarray*}
		I_n =  \frac{1}{n} \sum_{i=1}^n (\widehat R^*_i)^2 W_i W^\top _i.
	\end{eqnarray*}
	Note that selecting $\widehat R^*_i$ from the sample $(\widehat R_1, \ldots, \widehat R_n)$ is equivalent to selecting $X^*_i$ from $(X_1, \ldots, X_n)$ and putting $\widehat R^*_i  = [(X^*_i - \bar{X}_n)^\top  \widehat \Sigma^{-1}_n  (X^*_i - \bar{X}_n)]^{1/2}$.  Thus,
	\begin{eqnarray*}
		I_n &=&  \frac{1}{n} \sum_{i=1}^n (X^*_i - \bar X_n)^\top  \widehat \Sigma^{-1}_n  (X^*_i - \bar{X}_n) W_i W^\top _i \\
		&= & \frac{1}{n} \sum_{i=1}^n (X^*_i - \bar{X}_n)^\top  \Sigma^{-1}_0  (X^*_i - \bar{X}_n) W_i W^\top _i \\
		& &  +
		\frac{1}{n} \sum_{i=1}^n (X^*_i - \bar{X}_n)^\top  (\widehat \Sigma^{-1}_n-\Sigma^{-1}_0)  (X^*_i - \bar{X}_n) W_i W^\top _i.
	\end{eqnarray*}
	Since $(X^*_1, \ldots, X^*_n)$ is independent of $(W_1, \ldots, W_n)$, it can be shown using the same techniques based on Poissonization described in the proof of \citet[Theorem 3.6.3]{vdvwell}  that
	\begin{eqnarray*}
		\frac{1}{n} \sum_{i=1}^n (X^*_i - \bar{X}_n)^\top  \Sigma^{-1}_0  (X^*_i - \bar{X}_n) W_i W^\top _i   
	\end{eqnarray*}
	has the same limit distribution as 
	\begin{eqnarray*}
		\frac{1}{n} \sum_{i=1}^n (X_i - \mu_0)^\top  \Sigma^{-1}_0  (X_i - \mu_0) W_i W^\top _i  =  \frac{1}{n} \sum_{i=1}^n R_i^2 W_i W_i^\top  =  \frac{1}{n} \sum_{i=1}^n Y_i Y_i^\top , 
	\end{eqnarray*}
	which is 
	\begin{eqnarray*}
		\sqrt n \left(\frac{1}{n} \sum_{i=1}^n Y_i Y_i^\top    -   \mathcal{I}_d\right)  \Rightarrow  \mathbb K_0.
	\end{eqnarray*}
	If $\mathbb P^*_n$ denotes the empirical process based on $X^*_1, \ldots, X^*_n$, then it is known conditionally on $X_1, X_2, \ldots,$ the process $\sqrt n (\mathbb P^*_n - \mathbb P_n)$ has the same limit distribution as $\sqrt n  (\mathbb P_n- \mathbb P)$; see \citet[Chapter 3.6]{vdvwell}.  This in turn implies that $\frac{1}{n} \sum_{i=1}^n \Vert X^*_i - \bar{X}_n) \Vert^2  =  \mathcal O_\PP(1)$. Also,
	\begin{eqnarray*}
		& & \left \Vert \frac{1}{n} \sum_{i=1}^n (X^*_i - \bar{X}_n)^\top  (\widehat \Sigma^{-1}_n-\Sigma^{-1}_0)  (X^*_i - \bar{X}_n) W_i W^\top _i \right  \Vert \\
		& \le  & \Vert \widehat \Sigma^{-1}_n-\Sigma^{-1}_0 \Vert \ \frac{1}{n} \sum_{i=1}^n \Vert X^*_i - \bar{X}_n \Vert^2   
		=    \mathcal O_\PP(n^{-1/2}),
	\end{eqnarray*}
	where we have used the fact that $\Vert v v^\top  \Vert =  \Vert v \Vert^2 $ and $\Vert W_i \Vert =1 $ since $W_i$ belongs to the unit sphere.   This allows us to conclude that
	\begin{eqnarray*}
		\sqrt n (I_n - \mathcal{I}_d) \Rightarrow   \mathbb K_0
	\end{eqnarray*}
	as $n \to \infty$.  Finally, we have that 
	\begin{align*}
	\bar{X}^*_n  = &\frac{1}{n} \sum_{i=1}^n \tilde{X}^*_i  \\
	= &  \frac{1}{n}  \sum_{i=1}^n  \left[(X^*_i  -  \bar{X}_n)^\top   \widehat \Sigma^{-1}_n   (X^*_i  -  \bar{X}_n)\right]^{1/2}  W_i \\
	= &  \frac{1}{n}  \sum_{i=1}^n  \Big[(X^*_i  -  \bar{X}_n)^\top   \left(\widehat \Sigma^{-1}_n  - \Sigma^{-1}_0\right) (X^*_i  -  \bar{X}_n) \\
	& \qquad \qquad   + (X^*_i  -  \bar{X}_n)^\top  \Sigma^{-1}_0 (X^*_i  -  \bar{X}_n) \Big]^{1/2}  W_i.
	\end{align*}
	Using the inequality $\vert (a + h)^{1/2}  - a^{1/2} \vert \le \sqrt 2  a^{-1/2} \vert h \vert $  for $a > 0$ and $h$ small enough so that $a+h \ge 0$ together with the triangle inequality we have that
	\begin{align*}
	& \left \Vert  \bar{X}^*_n - \frac{1}{n} \sum_{i=1}^n \left[(X^*_i - \bar X_n) \Sigma^{-1}_0  (X^*_i - \bar{X}_n)\right]^{1/2}  W_i  \right \Vert \\
	& \le   \ \frac{\sqrt 2}{n} \sum_{i=1}^n \frac{ \left \vert (X^*_i - \bar X_n) \left(\widehat \Sigma^{-1} _n - \Sigma^{-1}_0\right)  (X^*_i - \bar{X}_n)  \right \vert }{\left[ X^*_i - \bar X_n) \Sigma^{-1}_0  (X^*_i - \bar{X}_n)  \right]^{1/2}} \ \tag{since $\Vert W_i \Vert =1$} \\
	& \le \frac{\sqrt 2}{\lambda_m}  \  \Vert \widehat \Sigma^{-1} _n - \Sigma^{-1}_0  \Vert
	\end{align*} 
	where $\lambda_m > 0$ is the largest eigenvalue of $\Sigma_0$. Using again the fact that $\widehat \Sigma^{-1} _n - \Sigma^{-1}_0  = O_p(n^{-1/2})$ and that 
	$$
	\frac{1}{n} \sum_{i=1}^n \left[(X^*_i - \bar X_n) \Sigma^{-1}_0  (X^*_i - \bar{X}_n)\right]^{1/2}  W_i 
	$$ 
	has the same limit distribution as 
	$$
	\frac{1}{n} \sum_{i=1}^n \left[(X_i - \mu_0) \Sigma^{-1}_0  (X_i - \mu_0)\right]^{1/2}  W_i  = \frac{1}{n} \sum_{i=1}^n R_i W_i  =   \frac{1}{n} \sum_{i=1}^n Y_i 
	$$
	conditionally on $X_1, X_2, \ldots$  it follows that 
	\begin{eqnarray*}
		\sqrt n \bar{X}^*_n  \Rightarrow  \mathcal{N}(0, \mathcal{I}_d)
	\end{eqnarray*}
	conditionally on $X_1, X_2, \ldots$.  Thus, $\sqrt n II_n =  o_\PP(n^{-1/2})$ and we get finally \eqref{LimitSigmastar}. 
	
	\bigskip 
	
	\emph{Details for Step 3.} \quad  First, note that 
	\begin{eqnarray*}
		\Sigma^{-1/2}_0 \mathbb D  \stackrel{d}{=} \mathbb D_0 \sim \mathcal{N}(0, \mathcal{I}_d).
	\end{eqnarray*}
	Now, we will show that 
	\begin{eqnarray}\label{rel}  
	2 \mathbb S_0  \stackrel{d}{=}   \mathbb S \Sigma^{1/2}_0  +  \Sigma^{1/2}_0  \mathbb S.
	\end{eqnarray}
	To  this aim, recall that conditionally on $X_1, X_2, \ldots  $ the covariance matrix $\widehat \Sigma^*_n$ is asymptotically equivalent to 
	\begin{eqnarray*}
		\frac{1}{n} \sum_{i=1}^n Y_i Y^\top _i = \Sigma^{-1/2}_0  \frac{1}{n} \sum_{i=1}^n (X_i - \mu_0)^\top  (X_i-\mu_0)^\top   \Sigma^{-1/2}_0
	\end{eqnarray*}
	and the right side is itself equivalent to  $\Sigma^{-1/2}_0  \widehat \Sigma_n \Sigma^{-1/2}_0$.   Now, note that 
	\begin{align*}
	& \sqrt n ((\widehat \Sigma^*_n)^{-1/2}  - \mathcal{I}_d)\\
	&  =  \sqrt n ((\widehat \Sigma^*_n)^{-1}(\widehat{ \Sigma}^*_n)^{1/2}  - \mathcal{I}_d) \\
	& =  \sqrt n \left((\widehat \Sigma^*_n)^{-1}  - \mathcal{I}_d\right) (\widehat{ \Sigma}^*_n)^{1/2} +  \sqrt n \left( (\widehat{ \Sigma}^*_n)^{1/2}- \mathcal{I}_d \right)  \\
	& =   \sqrt n \left((\widehat \Sigma^*_n)^{-1}  - \mathcal{I}_d\right) (\widehat{ \Sigma}^*_n)^{1/2}- (\widehat{ \Sigma}^*_n)^{1/2} \sqrt n \left( (\widehat \Sigma^*_n)^{-1} -   \mathcal{I}_d \right)
	\end{align*}
	and therefore,
	\begin{eqnarray*}
		(\mathcal{I}_ d+  (\widehat{ \Sigma}^*_n)^{1/2}) \sqrt n ((\widehat \Sigma^*_n)^{-1/2}  - \mathcal{I}_d)    =  \sqrt n \left((\widehat \Sigma^*_n)^{-1}  - \mathcal{I}_d\right) (\widehat{ \Sigma}^*_n)^{1/2}.
	\end{eqnarray*}
	Now, the left side weakly converges to $2 \mathbb S_0$. As for the right side, we have $(\widehat{ \Sigma}^*_n)^{1/2}  \to \mathcal{I}_d$ in probability. Hence,  $ \sqrt n \left((\widehat \Sigma^*_n)^{-1}  - \mathcal{I}_d\right) (\widehat{ \Sigma}^*_n)^{1/2}$ has the same weak limit as   $\sqrt n \left((\widehat \Sigma^*_n)^{-1}  - \mathcal{I}_d\right)$ which in turn has the same  weak limit as 
	\begin{eqnarray*}
		&&\sqrt n \Sigma^{1/2}_0  \left(\widehat \Sigma^{-1}_n  - \Sigma^{-1}_0  \right)  \Sigma^{1/2}_0 \\
		&& =  \sqrt n \Sigma^{1/2}_0  \left(\widehat \Sigma^{-1/2}_n  \widehat \Sigma^{-1/2}_n- \Sigma^{-1/2}_0 \Sigma^{-1/2}_0  \right)  \Sigma^{1/2}_0\\
		&& = \Sigma^{1/2}_0  \left(  \sqrt n \left(\widehat \Sigma^{-1/2}_n  - \Sigma^{-1/2}_0 \right) \widehat \Sigma^{-1/2}_n  + \Sigma^{-1/2}_0  \sqrt n \left(\widehat \Sigma^{-1/2}_n  - \Sigma^{-1/2}_0 \right)\right)\Sigma^{1/2}_0 \\
		&& \Rightarrow  \Sigma^{1/2}_0  \mathbb S  +  \mathbb S \Sigma^{1/2}_0,
	\end{eqnarray*}
	from which we conclude the claimed result in \eqref{rel}.

\end{proof}

\bibliographystyle{ims}
\bibliography{sphericsym}

\begin{thebibliography}{29}
\expandafter\ifx\csname natexlab\endcsname\relax\def\natexlab#1{#1}\fi
\expandafter\ifx\csname url\endcsname\relax
  \def\url#1{\texttt{#1}}\fi
\expandafter\ifx\csname urlprefix\endcsname\relax\def\urlprefix{URL }\fi

\bibitem[{Baringhaus(1991)}]{Baringhaus91}
\textsc{Baringhaus, L.} (1991).
\newblock Testing for spherical symmetry of a multivariate distribution.
\newblock \textit{Ann. Statist.} \textbf{19} 899--917.
\newline\urlprefix\url{https://doi.org/10.1214/aos/1176348127}

\bibitem[{Beran(1979)}]{beran1979testing}
\textsc{Beran, R.} (1979).
\newblock Testing for ellipsoidal symmetry of a multivariate density.
\newblock \textit{The Annals of Statistics}  150--162.

\bibitem[{Berk and Hwang(1989)}]{berk1989optimality}
\textsc{Berk, R.} and \textsc{Hwang, J.~T.} (1989).
\newblock Optimality of the least squares estimator.
\newblock \textit{Journal of Multivariate Analysis} \textbf{30} 245--254.

\bibitem[{Brillinger(1983)}]{brillinger83}
\textsc{Brillinger, D.~R.} (1983).
\newblock A generalized linear model with ``{G}aussian''\ regressor variables.
\newblock In \textit{A {F}estschrift for {E}rich {L}. {L}ehmann}. Wadsworth
  Statist./Probab. Ser., Wadsworth, Belmont, CA, 97--114.

\bibitem[{Cacoullos(2014)}]{cacoullos2014polar}
\textsc{Cacoullos, T.} (2014).
\newblock Polar angle tangent vectors follow cauchy distributions under
  spherical symmetry.
\newblock \textit{Journal of Multivariate Analysis} \textbf{128} 147--153.

\bibitem[{Cambanis et~al.(1981)Cambanis, Huang and Simons}]{cambanis81}
\textsc{Cambanis, S.}, \textsc{Huang, S.} and \textsc{Simons, G.} (1981).
\newblock On the theory of elliptically contoured distributions.
\newblock \textit{J. Multivariate Anal.} \textbf{11} 368--385.
\newline\urlprefix\url{https://doi.org/10.1016/0047-259X(81)90082-8}

\bibitem[{Duan and Li(1991)}]{DuanKerChau91}
\textsc{Duan, N.} and \textsc{Li, K.-C.} (1991).
\newblock Slicing regression: a link-free regression method.
\newblock \textit{Ann. Statist.} \textbf{19} 505--530.
\newline\urlprefix\url{https://doi.org/10.1214/aos/1176348109}

\bibitem[{Eaton(1986)}]{eaton86}
\textsc{Eaton, M.~L.} (1986).
\newblock A characterization of spherical distributions.
\newblock \textit{J. Multivariate Anal.} \textbf{20} 272--276.
\newline\urlprefix\url{https://doi.org/10.1016/0047-259X(86)90083-7}

\bibitem[{Fang et~al.(2002)Fang, Fang and Kotz}]{Fang2002}
\textsc{Fang, H.-B.}, \textsc{Fang, K.-T.} and \textsc{Kotz, S.} (2002).
\newblock The meta-elliptical distributions with given marginals.
\newblock \textit{J. Multivariate Anal.} \textbf{82} 1--16.
\newline\urlprefix\url{https://doi.org/10.1006/jmva.2001.2017}

\bibitem[{Fang et~al.(1990)Fang, Kotz and Ng}]{Fang1990}
\textsc{Fang, K.~T.}, \textsc{Kotz, S.} and \textsc{Ng, K.~W.} (1990).
\newblock \textit{Symmetric multivariate and related distributions}, vol.~36 of
  \textit{Monographs on Statistics and Applied Probability}.
\newblock Chapman and Hall, Ltd., London.
\newline\urlprefix\url{https://doi.org/10.1007/978-1-4899-2937-2}

\bibitem[{Fang et~al.(1993)Fang, Zhu and Bentler}]{Fang93}
\textsc{Fang, K.~T.}, \textsc{Zhu, L.~X.} and \textsc{Bentler, P.~M.} (1993).
\newblock A necessary test of goodness of fit for sphericity.
\newblock \textit{J. Multivariate Anal.} \textbf{45} 34--55.
\newline\urlprefix\url{https://doi.org/10.1006/jmva.1993.1025}

\bibitem[{Fang(2018)}]{fang2018symmetric}
\textsc{Fang, K.~W.} (2018).
\newblock \textit{Symmetric multivariate and related distributions}.
\newblock Chapman and Hall/CRC.

\bibitem[{Francq et~al.(2017)Francq, Jim{\'e}nez-Gamero and
  Meintanis}]{francq2017tests}
\textsc{Francq, C.}, \textsc{Jim{\'e}nez-Gamero, M.} and \textsc{Meintanis, S.}
  (2017).
\newblock Tests for conditional ellipticity in multivariate garch models.
\newblock \textit{Journal of econometrics} \textbf{196} 305--319.

\bibitem[{Gupta(1971)}]{gupta1971nonsingularity}
\textsc{Gupta, S.~D.} (1971).
\newblock Nonsingularity of the sample covariance matrix.
\newblock \textit{Sankhy{\=a}: The Indian Journal of Statistics, Series A}
  475--478.

\bibitem[{Henze et~al.(2014)Henze, Hl\'avka and Meintanis}]{Henze2014}
\textsc{Henze, N.}, \textsc{Hl\'avka, Z.} and \textsc{Meintanis, S.~G.} (2014).
\newblock Testing for spherical symmetry via the empirical characteristic
  function.
\newblock \textit{Statistics} \textbf{48} 1282--1296.
\newline\urlprefix\url{https://doi.org/10.1080/02331888.2013.832764}

\bibitem[{Huffer and Park(2007)}]{huffer2007test}
\textsc{Huffer, F.~W.} and \textsc{Park, C.} (2007).
\newblock A test for elliptical symmetry.
\newblock \textit{Journal of Multivariate Analysis} \textbf{98} 256--281.

\bibitem[{Koltchinskii and Li(1998)}]{koltchinskii1998testing}
\textsc{Koltchinskii, V.} and \textsc{Li, L.} (1998).
\newblock Testing for spherical symmetry of a multivariate distribution.
\newblock \textit{Journal of Multivariate Analysis} \textbf{65} 228--244.

\bibitem[{Koltchinskii and Sakhanenko(2000)}]{koltchi2000}
\textsc{Koltchinskii, V.} and \textsc{Sakhanenko, L.} (2000).
\newblock Testing for ellipsoidal symmetry of a multivariate distribution.
\newblock In \textit{High dimensional probability, {II} ({S}eattle, {WA},
  1999)}, vol.~47 of \textit{Progr. Probab.} Birkh\"{a}user Boston, Boston, MA,
  493--510.

\bibitem[{Liang et~al.(2008)Liang, Fang and Hickernell}]{Liang2008}
\textsc{Liang, J.}, \textsc{Fang, K.-T.} and \textsc{Hickernell, F.~J.} (2008).
\newblock Some necessary uniform tests for spherical symmetry.
\newblock \textit{Ann. Inst. Statist. Math.} \textbf{60} 679--696.
\newline\urlprefix\url{https://doi.org/10.1007/s10463-007-0121-9}

\bibitem[{Manzotti et~al.(2002)Manzotti, P{\'e}rez and
  Quiroz}]{manzotti2002statistic}
\textsc{Manzotti, A.}, \textsc{P{\'e}rez, F.~J.} and \textsc{Quiroz, A.~J.}
  (2002).
\newblock A statistic for testing the null hypothesis of elliptical symmetry.
\newblock \textit{Journal of Multivariate Analysis} \textbf{81} 274--285.

\bibitem[{Olive(2014)}]{olive2014}
\textsc{Olive, D.~J.} (2014).
\newblock \textit{Statistical theory and inference}.
\newblock Springer, Cham.
\newline\urlprefix\url{https://doi.org/10.1007/978-3-319-04972-4}

\bibitem[{Pollard(2012)}]{pollard2012convergence}
\textsc{Pollard, D.} (2012).
\newblock \textit{Convergence of stochastic processes}.
\newblock Springer Science \& Business Media.

\bibitem[{Romano(1989)}]{romano1989bootstrap}
\textsc{Romano, J.~P.} (1989).
\newblock Bootstrap and randomization tests of some nonparametric hypotheses.
\newblock \textit{The Annals of Statistics}  141--159.

\bibitem[{Sakhanenko(2008)}]{sakhanenko2008testing}
\textsc{Sakhanenko, L.} (2008).
\newblock Testing for ellipsoidal symmetry: A comparison study.
\newblock \textit{Computational statistics \& data analysis} \textbf{53}
  565--581.

\bibitem[{Smith(1977)}]{smith77}
\textsc{Smith, P.~J.} (1977).
\newblock A nonparametric test for bivariate circular symmetry based on the
  empirical {CDF}.
\newblock \textit{Comm. Statist.--Theory Methods} \textbf{A6} 209--220.
\newline\urlprefix\url{https://doi.org/10.1080/03610927708827484}

\bibitem[{van~der Vaart(1998)}]{vdv98}
\textsc{van~der Vaart, A.~W.} (1998).
\newblock \textit{Asymptotic statistics}, vol.~3 of \textit{Cambridge Series in
  Statistical and Probabilistic Mathematics}.
\newblock Cambridge University Press, Cambridge.
\newline\urlprefix\url{https://doi.org/10.1017/CBO9780511802256}

\bibitem[{van~der Vaart and Wellner(1996)}]{vdvwell}
\textsc{van~der Vaart, A.~W.} and \textsc{Wellner, J.~A.} (1996).
\newblock \textit{Weak convergence and empirical processes}.
\newblock Springer Series in Statistics, Springer-Verlag, New York.
\newblock With applications to statistics.

\bibitem[{van~der Vaart and Wellner(2007)}]{vdvwell2007}
\textsc{van~der Vaart, A.~W.} and \textsc{Wellner, J.~A.} (2007).
\newblock Empirical processes indexed by estimated functions.
\newblock In \textit{Asymptotics: particles, processes and inverse problems},
  vol.~55 of \textit{IMS Lecture Notes Monogr. Ser.} Inst. Math. Statist.,
  Beachwood, OH, 234--252.
\newline\urlprefix\url{https://doi.org/10.1214/074921707000000382}

\bibitem[{Wellner(2005)}]{wellner2005empirical}
\textsc{Wellner, J.~A.} (2005).
\newblock Empirical processes: Theory and applications.
\newblock \textit{Notes for a course given at Delft University of Technology} .

\end{thebibliography}

	\end{document}